\newtheorem{theorem}{Theorem}
\newtheorem{corollary}{Corollary}
\newtheorem{lemma}{Lemma}
\newtheorem{proposition}{Proposition}
\newtheorem{definition}{Definition}
\newtheorem{example}{Example}
\newtheorem*{note}{\bf Notational convention}
 \newcommand{\ldbrack}{{[}} \newcommand{\rdbrack}{{]}}
\numberwithin{theorem}{section}
\numberwithin{definition}{section}
\numberwithin{example}{section}
\numberwithin{lemma}{section}
\numberwithin{corollary}{section}
\numberwithin{proposition}{section}
\theoremstyle{definition}
\newtheorem{remark}{Remark}
\numberwithin{remark}{section}
\newcommand{\A}{\mathcal{A}}
\newcommand{\s}{\mathsf{s}}
\newcommand{\T}{\mathsf{t}}
 \newcommand{\OO}{\mathsf{o}}
\newcommand{\aaa}{\mathsf{a}}
\newcommand{\CC}{\mathbb{C}}
\newcommand{\FF}{\mathbb{F}}
\newcommand{\KK}{\mathbb{K}}
\newcommand{\RR}{R'}
\newcommand{\ZZ}{\mathbb{Z}}
\newcommand{\TL}{\mathrm{TL}}
\newcommand{\BMW}{\mathrm{BMW}}
\numberwithin{equation}{section}
\begin{document}

\title{Framization and deframization}

\author{Francesca  Aicardi}
 \address{ Sistiana 56, 34011 Trieste (Italy)}
\email{francescaicardi22@gmail.com}
 \author{Jes\'us Juyumaya}
 \address{Instituto de Matem\'{a}ticas, Universidad de Valpara\'{i}so,
 Gran Breta\~{n}a 1111, Valpara\'{i}so 2340000, Chile.}
 \email{juyumaya@gmail.com}
\author{Paolo Papi}
\address{Dipartimento di Matematica G. Castelnuovo, Sapienza Universit\`a di Roma, P.le Aldo Moro 5, 00185 Roma, Italy}
 \email{papi@mat.uniroma1.it}

\date{}
\keywords{Monoid, Framization, bt-algebra, Knot invariant}
\subjclass{57M99,20C08,20F36}
\thanks{Corresponding author: Paolo Papi}

\begin{abstract}
Starting from the geometric construction of the framed braid group,
we define and study the framization of several Brauer-type monoids and also
the  set partition  monoid, all of which appear in knot theory. We introduce the
 concept of deframization,    which is  a  procedure to obtain a tied monoid from a
given framed monoid. Furthermore, we show in detail how this procedure works on
the monoids mentioned above. We also discuss the framization and
deframization of some algebras, which are deformations, respectively,  of the framized  and deframized  monoids
 discussed here.
\end{abstract}
\maketitle
\centerline{{\sl To Peter Littelmann on the occasion of his retirement}}
\setcounter{tocdepth}{1}
\tableofcontents
\section{Introduction}

The framed braid group $\mathcal{F}_n$ appears in  the study of so-called templates, see \cite{MeTuPRA1991}. The definition of $\mathcal{F}_n$ can be given in  geometric terms by  attaching an integer, called {\it framing}, to each strand of a geometric braid. Equivalently,
a framed braid can be represented as a ribbon braid, that is obtained by thickening a strand  into  a ribbon, and where the framing is the number of half-twists of the ribbon. For the role of $\mathcal{F}_n$ in knot theory, see \cite{KoSmPAMS1992, JuLaAinM2013} for instance.  From an algebraic point of view it turns out that $\mathcal{F}_n=\ZZ^n\rtimes B_n$, where $B_n$ denotes the group of braids on  $n$ strands, see \cite{KoSmPAMS1992}. The $d$-modular framed braid group $\mathcal{F}_{d,n}$ is formed by braids having as framing an integer modulo $d$,   and we have that $\mathcal{F}_{d,n}$ is the group  $(\ZZ/d\ZZ)^n\rtimes B_n$.
Now, just as $B_n$ has the symmetric group $S_n$ as a natural quotient, $\mathcal{F}_n$ maps onto the $d$-modular framed symmetric group $S_{d, n } = (\ZZ/d\ZZ)^n\rtimes S_n$. All of these groups are closely related through  knot algebras, namely, the Hecke and the Yokonuma-Hecke algebras, \cite{YoCRASP1967, JuJKTR2004, MaIMRN2017}. Indeed, the Hecke algebra can be realized as a quotient of the group algebra of $B_n$ or as a deformation of the group algebra of $S_n$, and the Yokonuma-Hecke (Y-H for short) algebra can be realized as a quotient of the group algebra of $\mathcal{F}_n$ or
 as a deformation of the group algebra of $S_{d, n }$.  Namely, a presentation of the Y-H algebra is obtained by adding framing generators to the defining generators of the Hecke algebra with commutation relations coming from  $ S_{d,n}$ and a certain deformation of the involutive generators of $S_n$, see \cite{JuJKTR2004} for details.
  This  deformation  involves  necessarily  idempotent elements  $\bar e_i$ (formula (\ref{ei})), expressed as  averages of  products  of  the framing  generators,  and maps to  the corresponding relation of the Hecke algebra when the framing generators are  sent to 1. Observe also  that the dimension of the Y-H algebra   equals  the  cardinality   of $ S_{d,n}$.  We thus   say  that  the Y-H algebra
 is a framization of the Hecke algebra as well as   $S_{d,n}$ is a framization of $S_n$.

  In this paper   we introduce    the notion of {\it deframization},   the prototypical  example  being  the so-called  bt-algebra, which is considered as  deframization of the Y-H algebra.

The bt-algebra  was  defined in \cite{AiJu}  and    it is  in fact obtained from the Y-H algebra,  by  eliminating the  framing generators  and  adding  new  independent   generators $e_i$, that  replace the idempotent  $\bar e_i$ in the defining relations of the Y-H algebra,  and  that satisfy   the  same     relations as  the $\bar e_i$ with the  other generators    in the Y-H algebra.
The  quite  unexpected fact  is that  the  $e_i$,  called  {\it  ties}  for their  diagrammatical  interpretation,  define  a  partition  of the  set  of  strands  of  the  braid  associated to  an element of the bt-algebra. In fact, it was  proven in \cite{SRH}, \cite{EsRyJPAA2018} that  the dimension of the  bt-algebra  is the  cardinality of the  ramified  symmetric monoid  $P_n \rtimes S_n$,  $P_n$  being the    monoid of  set partitions  of $[n]$.  Note that  this ramified monoid  has  a presentation involving   the transpositions  and the  same  ties $e_i$, so that it is also called  tied symmetric monoid  and  the bt-algebra can be considered as  a  deformation  of  it.

The  bt-algebra   is  the  algebraic counterpart of tied  links  (as  the Hecke algebra is  for  classical  links), and  a  trace on it  allows us to  recover a  Homflypt type invariant for tied links
 $\widetilde{\mathcal X}$ that is more powerful than the Homflypt polynomial on links. A natural question then arises: is there a Kauffman type invariant for tied links analogous to $\widetilde{\mathcal X}$? The answer is affirmative: this invariant, denoted by $\widetilde{\mathcal L}$, was built in \cite{AiJuMathZ2018}; in the same paper the tBMW algebra is defined, which is related to $\widetilde{\mathcal L}$ as the bt-algebra is related to $\widetilde{\mathcal X}$.

However, the definition of the tBMW algebra  does not use any intermediate algebra.   In fact  it arises  from diagrammatic considerations on the elementary tangles and braids that define the BMW algebra \cite{BiWeTAMS1989}, together with the ties generators and the so-called tied tangles, which are  elementary  tangles with a tie connecting their cups.

Recall  also  that     the  ties,  together with the {\it tied  generators}, have  been  succesfully  introduced to give a presentation for  the  so called  ramified  monoid (Jones,  Brauer, Inverse  symmetric)\cite{AiArJuJPAA2023},   in which  the  ties  define  a  partition in the  set of  strands, generalizing the  case of the tied  symmetric  monoid.

  This  paper  is  devoted to  answer the following natural  question: can   the  monoids quoted above,  and  possibly their related  algebras,   be  recovered  from the  corresponding  framed  monoid or  algebras?  We   underline  the fact  that  our  approach  is  finalized to  applications to  knot  theory, therefore   the  geometric/topological   meaning  related  to the diagrammatic  interpretation    is  our   guideline.

 We start   by defining   the framization of some monoids. Then, we  introduce  the notion  of deframization, a procedure allowing to   build, starting from  a  framed monoid  algebra,
  a new monoid,  which can be understood as a ramified monoid  in the sense of \cite{AiArJuJPAA2023}.

The  main tool  of  the  deframization procedure  is the  definition, in the  framed  monoid  algebra,  of  idempotent  elements  that  generalize  the  $\bar e_i$  in the  Y-H  algebra  and  in the  framed  symmetric  monoid. Such an element,  expressed  as  a suitable average  of  products of  framings  with  a generator  of the monoid,  has  the elementary property  of  allowing  a   single framing to  commute  with that  generator,  and, in term of  diagrams, to move  from  a  strand  to another  one.  For this  reason these  elements  are  called {\it  bridges}.  The  remarkable  result is that,  defining  new independent  generators  that keep  the  same  commuting relations  as the bridges  among them  and with the  other generators  of the  framed monoid, they  behave as  tied  generators  and,  by  removing the  framings, they  generate  the corresponding  ramified  monoid.

 At level of  monoid algebra,  the definition  of the bridge  elements is always possible.  The same is not true  in general  for algebras in which  relations  proper of  the  deformation, i.e., not shared  with the  algebra  monoid,  make the bridges not independent  from  one another.    This  leads us  to  define the  framization and  deframization of  an algebra, which is a deformation of a  monoid  algebra,   only in the  case  when   the  framized  and  deframized algebras  are  deformations of  the  corresponding  framed  and  tied  monoid algebras, and  keep  their  dimension.

The paper is organized as follows. Section 2 is dedicated to giving definitions and notational conventions to be used in a paper on {\it knot structures}; in particular, we introduce  the monoid of set partitions, Jones and Brauer monoids,  the Hecke and BMW algebras among others. Section 3 discusses the framization of some  knot monoids. The definition of these  framed monoids is  inspired by   the  classic framed  braid group, but the framing is now an integer modulo $d$. Now, a significant interpretation of the framings modulo $d$ is as beads  on the thread that are counted modulo $d$. With
 this interpretation, a framed monoid with framings modulo $d$ is called an {\it abacus monoid}.  We introduce in Subsection 3.3,  by generators and relations, the framed Jones  and framed Brauer monoids (see Subsection 3.4) and two version of framed rook monoid, see Subsection 3.5. We show that they are isomorphic to their abacus version and, as a byproduct, we  calculate their cardinality: see Theorems \ref{propJdn}, \ref{Brauerabacus}, \ref{Rook1abacus}, \ref{Rookabacus}.

 Section 4 begins by  introducing  the notion of   deframization, which is given in   Definition \ref{deframizationdef}. In short,  the deframization of a framized monoid $M_{d,n}$ is a procedure that builds from a monoid algebra $A$ of $M_{d,n}$ a certain  quotient algebra $D(A)$, that turns out  to be the monoid algebra of the ramified (tied) monoid of $M$.  Then it is shown how the procedure works for the framizations of the rook monoids, the  Jones and Brauer monoids.

 In Section 5 we try to  extend the notion of   framization and  deframization  to algebras which are deformations of the monoid algebras considered in this paper. Thus, we prove that, besides    the bt-algebra,  the Temperley-Lieb algebra admits a  tied  version  with  two parameters   that  can be recovered  by    deframization   of  a  framed  algebra  with  $d$  parameters.
The   BMW algebra, which is  a deformation of  the  Brauer  monoid  algebra,  does not admit  a  framization but  a tied  version,  which is in fact  a deformation of the tied  Brauer monoid algebra obtained  by  deframization.

\section{Background}
 In this section we introduce some notations used along the paper and we recall the main objects we will deal with in our work: braid and symmetric  groups,  rook monoid, set partition  monoid, Brauer and  Jones  monoids,  with  their  presentations  by  generators  and  relations.  We refer to these monoids as knot monoids since they play a role in the construction of knot invariants. They are related, respectively, to  the following knot algebras: Hecke algebra, rook algebra, Temperley-Lieb algebra, BMW algebra and bt-algebra.

\subsection{Notational convention.}
Whenever the range of  indices of the generators is  not specified, we stipulate  that the indices take any value for which the respective generator is  defined.

By  $\ldbrack r,s\rdbrack$  we  will denote  the  set of integers i such that  $r\le i \le s$  and by $[n]$ the set  of  integers $i$  such that  $1\le i \le n$.

The terminology {\it $K$-algebra} means unital associative algebra over the field $K$.

 For any monoid $M$ we denote by $K[M]$ the monoid algebra of $M$.
 Recall that, if $ \langle X, R\rangle $ is a presentation of $M$, then the algebra $K[M]$ can be presented by generators $X$ and relations $R$.

\subsection{Braid and symmetric groups}
Let $B_n$ denote the $n$ strand  braid group, that is,  the group with generators $\sigma_1,\ldots ,\sigma_{n-1}$ satisfying the  {\it braids relations}
\begin{equation} \label{braid}\sigma_i\sigma_j=
\sigma_j\sigma_i\text{ if $|i-j|>1$ and} \quad \sigma_i\sigma_j\sigma_i =
\sigma_j\sigma_i\sigma_j\text{ if $|i-j|=1$}.\end{equation}
 We denote by $S_n$ the symmetric group on $n$ letters and by $s_i$ the elementary transposition $(i, i+1)$. Recall that the $s_i$ define the  Coxeter  presentation of $S_n$, whose relations are:
\begin{align}\label{Coxeter-Moore1}
 &s_i^2=1,\\
&s_is_j= s_js_i,\quad \text{if $|i-j|>1$}, \quad  s_is_js_i =s_js_is_j,\quad \text{  if $|i-j|=1$.}\label{Coxeter-Moore2}
\end{align}

\subsection{The rook monoid $R_n$}\label{RM} The rook monoid $R_n$ is a generalization of the symmetric group and is also denoted $IS_n$, for inverse symmetric monoid.  For more details see  \cite{LiMSM-AMS1996} for instance.  $R_n$ is  presented by generators  $s_i,\ldots ,s_{n-1}, p_1,\ldots , p_n$ subject to the Coxeter relations among the $s_i$  together with the following relations:
\begin{align}
p_i^2=p_i,&\qquad  p_ip_j =p_jp_i,\label{Mrook1}\\
p_is_j = s_jp_i \quad \text{for $i< j$,}&\qquad p_is_j= s_jp_i=  p_i\quad \text{for $1\leq j <i\leq  n,$}\label{Mrook3}\\
p_is_i p_i= p_{i+1}&\qquad \text{for $i\in \ldbrack 1,n-1\rdbrack.$}\label{Mrook4}
\end{align}

\begin{remark}\label{Rookgenerators}     We  will  use as well  another  presentation having the     $r_i$ instead of  the $p_i$ as generators,  see    \cite{AiArJuMMJ}.  In this   presentation,   relations (\ref{Mrook1})--(\ref{Mrook4})  are  replaced by
\begin{align}
r_i^2&=r_i,\quad  r_ir_j =r_jr_i,\label{M2rook1}\\
r_js_i &= s_ir_j    \qquad   j\not=i,i+1, \label{M2rook2}\\
r_is_i &= s_i r_{i+1},\quad r_{i+1}s_i= s_i r_i  \qquad \text{for $i\in \ldbrack 1,n-1\rdbrack
.$}\label{M2rook3}\\
r_is_i r_i&  = r_i r_{i+1}\qquad \text{for $i\in \ldbrack 1,n-1\rdbrack.$}\label{M2rook4}
\end{align}
The $p_i$ and  $r_i$  are  related by
 the  equations:
\begin{align}  &  r_i=   s_{i-1} s_{i-2}\cdots s_1 p_1 s_1 s_2 \cdots  s_{i-1}, \label{RHO1}\\
               & p_i= r_1 r_2 \cdots r_i. \label{RHO2}
\end{align}
\end{remark}

\subsection{The monoid  $P_n$  of  set partitions}

The monoid $P_n$ is formed by the set  partitions of  $\ldbrack 1,n \rdbrack$ with  product given by  refinement, see \cite{AiArJuJPAA2023} for details.
 We recall  that  (\cite[Theorem 2]{FiBAMS2003})
the monoid $P_n$ can be presented by generators $e_{i,j}$, with $i,j \in\ldbrack 1, n \rdbrack$ and $i<j$, satisfying
 the following relations:
\begin{align}
e_{i,j}^2&=e_{i,j}\quad\text{for all }i<j,\label{Pn1}\\
e_{i,j}e_{r,s}&=e_{r,s}e_{i,j}\quad\text{for all }i<j\text{ and }r<s,\label{Pn2}\\
e_{i,j}e_{i,k}&=e_{i,j}e_{j,k}=e_{i,k}e_{j,k}\quad\text{for all }i<j<k.\label{Pn3}
\end{align}

\subsection{Jones and Brauer monoids}\label{subsectionJB}
We denote by $J_n$ the Jones monoid (\cite{BDS, 17}), that is the monoid generated by $t_1,\dots,t_{n-1}$ subject to the relations:
\begin{align}
& t_i^2  =t_i,   \label{J01} \\
& t_it_j   =t_jt_i \quad \text{if $\vert i-j\vert>1$, and }
\quad t_it_jt_i   =t_i \quad \text{if $\vert i-j\vert=1$.} \label{J02}
\end{align}
 Recall that the cardinality of $J_n$ is the $n$-Catalan number $Cat_n:= \frac{1}{n+1}\binom{2n}{n}$.

The Brauer monoid, denoted $Br_n$,  is defined by the generators $s_1,\dots,s_{n-1}$, $t_1,\dots,t_{n-1}$ satisfying
relations (\ref{Coxeter-Moore1})-(\ref{Coxeter-Moore2}) and  (\ref{J01})-(\ref{J02}), together with  the following {\it mixed} relations.
\begin{align}
&  t_i s_i=  s_i t_i = t_i, \label{B01}\\
& t_is_j   =s_jt_i \quad \text{if $\vert i-j\vert>1$},\label{B02}\\
&  s_it_jt_i =s_jt_i, \quad t_it_js_i= t_is_j\quad \text{if $\vert i-j\vert=1$}.\label{B03}
\end{align}
These relations imply
\begin{equation}\label{B04}
s_it_js_i   =s_j t_i s_j, \quad
 t_i s_j t_i =  t_i  \quad \text{and}\quad
 t_i t_j = t_i s_j s_i= s_js_it_j,  \quad \text{for $\vert i-j\vert=1$.}
\end{equation}
Recall that the cardinality of $Br_n$ is $(2n-1)!!$.  See \cite{KuMaCEJM2006} for details.
\subsection{The Hecke  algebra} For an indeterminate $v$ over $\CC$,  we set  $\KK = \CC(v)$.
 Denote by $\mathrm{H}_n(v)$ the Hecke algebra, that is the  $\KK$-algebra generated by $h_1,\ldots ,h_{n-1}$ subject to the braid relations among the $h_i$ and the quadratic relations $h_i^2= 1  + (v-v^{-1})h_i$,  $i\in\ldbrack 1,n-1\rdbrack$. Recall that $\mathrm{H}_n(v)$ can be regarded as a deformation of the group algebra of the symmetric group $S_n$, i.e. $\KK[S_n]= \mathrm{H}_n(1)$, and also  as the quotient of $\KK[B_n]$ over the two-sided ideal generated
by the quadratic expressions $\sigma_i^2-1 -(v-v^{-1})\sigma_i,    i\in  \ldbrack 1,n-1\rdbrack$.

\subsection{The rook algebra}\label{rA}  The rook algebra, denoted by $\mathcal{R}_n(v)$, is defined by generators $T_1,\ldots , T_{n-1}$, $P_1, \ldots , P_n$ satisfying  the Hecke relations among the $T_i$ together with the following relations:
\begin{align}
P_i^2  &=P_i,\quad \label{Arook1}\\
P_iT_j = T_jP_i \quad \text{for $i< j$,}&\qquad P_iT_j  = T_j P_i =   vP_i\qquad \text{for $1\leq j <i\leq  n,$}\label{Arook3}\\
P_{i+1}  = v  P_iT_i^{-1}P_i&\qquad \text{for $i\in\ldbrack 1, n-1\rdbrack$.}\label{Arook4}
\end{align}

The rook algebra was introduced by  L. Solomon in   \cite{SoJA2004}.
The presentation of $\mathcal{R}_n(v)$ described above  is due to T. Halverson, see \cite[Section 2]{HaJA2004}.

\subsection{ The Temperley-Lieb  algebra}\label{TL}  Let  $n$ be a  positive integer, $u$ an indeterminate and   $\KK = \CC(u)$. We denote by $\mathrm{TL}_{n}=\mathrm{TL}_{n}(u)$ the Temperley-Lieb algebra, that is the  $\KK$-algebra generated by $t_1, t_2,\ldots, t_{n-1}$,  satisfying the following relations
\begin{equation}
t_i^2=u t_i \quad \text{for all $i$},  \quad  t_it_j   =t_jt_i \quad \text{if $\vert i-j\vert>1$},
\quad t_it_jt_i   =t_i \quad \text{if $\vert i-j\vert=1$}. \label{Z0}\\
\end{equation}
 Note that   $\mathrm{TL}_n(u)$ is  a  deformation of  the  monoid  algebra $\mathbb C[J_n]$, i.e.,
$\mathrm{TL}_n(1)=\mathbb C[J_n]$.

\subsection{The BMW  algebra}\label{BMWalgebra}

 We define the BMW algebra (\cite{BiWeTAMS1989, MuOJM1987}) as  in  \cite[Definition 1]{CoEtAlJA2005}.

Let $a$, $q$ be two indeterminates over $\CC$.  The  algebra   $\mathrm{BMW}_n$,  is the $\CC(a,q)$-algebra   defined by braid generators $g_1, \ldots ,g_{n-1}$,  invertible,  subject to  braid relations,  and tangle generators $t_1, \ldots ,t_{n-1}$,  subject  to  relations  (\ref{J02}),  and  to the  mixed  relations
\begin{align}
 & g_i t_i   =  a^{-1} t_i, \label{BMW1}\\
&  t_ig_j^{\pm 1}t_i  =  a^{\pm 1} t_i , \qquad \text{for}\quad \vert i-j\vert =1,\label{BMW2} \\
&   g_i -g_i^{-1}  =  (q-q^{-1})(1 - t_i). \qquad \label{BMW3}
\end{align}

\begin{remark}\label{paramx}
    It is well-known that   $\mathrm{BMW}_n$ is a deformation of the
    Brauer algebra, see \cite{WeAM1988}.  Observe that the deformation of  (\ref{J01}) is a consequence of (\ref{BMW3}) and (\ref{BMW1}). More precisely we have
    \begin{equation} t_i^2= x t_i, \quad \text{ with}\quad  x=  \frac{a-a^{-1}}{q-q^{-1}} + 1, \label{parx} \end{equation}
   so that, in the above presentation,  $\BMW_n=\CC[Br_n]$ when $a=q=1$.
\end{remark}

\section{Framization}

Recall that diagrammatically  the $d$-framed, or  framed, braid group  is constructed  by attaching an integer $\mod d$ to each strand of $B_n$. Following this construction we introduce the framed monoid for the   knot monoids mentioned above. To do this  we need  the following preparation.
 For every positive integer  $d$ we denote $\ZZ/d\ZZ =\{1,2,\ldots , d\}$ the group of integers modulo $d$, and  by  $C_d$ the group $\ZZ/d\ZZ$ written in multiplicative notation, so that
$C_d$ has the presentation  $ \langle z \,;\, z^d=1\rangle$. We  define  also $C_{d,n}$ as the group presented by generators $z_1,\ldots,z_n$ and relations
\begin{equation}\label{PreCdn}
 z_i^d=1,\quad z_iz_j =z_jz_i.
\end{equation}

 The elements of  $C_{d,n}$    can  be  diagrammatically  represented    by  $n$  vertical  parallel  lines,  to each  one  of  which is  attached  an  integer.  The  product is  obtained  by  concatenation  and  the  integers  are  added  modulo $d$.  An  equivalent way  to  represent  the  integer  $k$  attached  to  a  line  is  to  put  $k$  beads  on the line.  These  lines   will  be interpreted   as strands   in the     contexts of  braid  type.   For this  reason we refer to  $C_{d,n}$  as   {\it abacus}  monoid.

\begin{figure}[H]
 \includegraphics[scale=0.9]{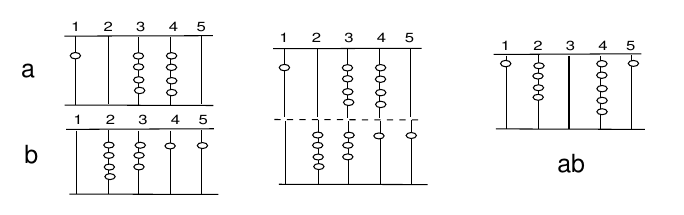}
 \caption{  Two elements of $C_{7,5}$ and their  product. }\label{Fig00}
 \end{figure}

 The  abacus  monoid  is  manifestly  generated by  the  special  elements  $\OO_i$,  $i\in \ldbrack 1,n \rdbrack$, consisting of  diagrams differing from the identity by a sole  bead on the  line $i$,  see  Figure \ref{Fig04}.
Observe that $ \OO_i$ is invertible with inverse $ \OO_i^{d-1}$.
\begin{figure}[H]
\includegraphics[scale=0.8]{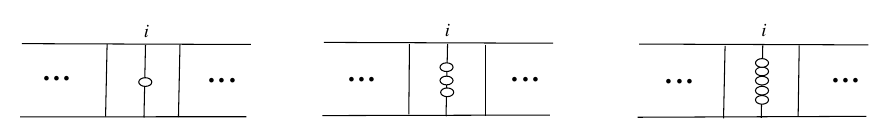}
\caption{Diagrams of the  generators  $\OO_i$, $\OO_i^3$ and  $\OO_i^{-1}= \OO_i^{5}$ in $C_{6,n}$.}\label{Fig04}
\end{figure}

\subsection{Framization of symmetric   and  braid  group}

 We denote by   $\mathcal{S}_{n}$ the group $\ZZ\wr S_n = \ZZ^n\rtimes S_n$  derived from the permutation action of $S_n$ on $\ZZ^n$. This action together with the natural epimorphism of $B_n\to S_n$ produces the so called {\it framed braid} group
 $ \mathcal{F}_n:= \ZZ  \wr  B_n$,  which appears firstly in \cite{KoSmPAMS1992, MeTuPRA1991}.  In these papers the elements of  $\mathcal{F}_n$  are the usual geometrical braids but with an integer attached to each strand.

 Define
\begin{equation}\label{CF}S_{d,n}=C_d  \wr  S_n ,\quad\mathcal{F}_{d,n}=C_d\wr B_n.\end{equation}

We  say  that  $S_{d,n}$  is  the  result of framization of  the  symmetric   group,  and    $\mathcal{F}_{d,n}$  is  the  result of  the  framization of  the  braid  group.

\begin{remark}\label{RemPre} Observe that $S_{d,n}$  has a presentation with generators $z_i$ and  $s_i$  satisfying  the relations in  (\ref{PreCdn}),  the Coxeter relations (\ref{Coxeter-Moore1}),  (\ref{Coxeter-Moore2}) among the $s_i$ , together with the relation
\begin{equation} z_j  s_i =  s_i z_{s_i(j)},\label{sizi}\end{equation}  where $s_i(j)$ denotes the action of the transposition $(i,\, i+1)$ on $j$.
 \end{remark}

Note that  $\mathcal{F}_{d,n}$ is presented by $z_i$ and  $\sigma_i$ satisfying  the relations in  (\ref{PreCdn}),  the braid relations  (\ref{braid}) among the $\sigma_i$, together with the relation $z_j \sigma_i =   \sigma_i z_{s_i(j)}$.
Also, it is  evident  that  relations  between  $z_i$  and  $s_j$  or $\sigma_j$   are  compatible  with  the diagrammatic  interpretation of  the  framing as  a  bead  sliding  along  the  strand, so that  $\mathcal{F}_{d,n}$ can be viewed as an abacus monid. The next figure show  an example of the product in  $\mathcal{F}_{3,5}$  as  abacus monoid.

 \begin{figure}[H]
 \includegraphics[scale=0.9]{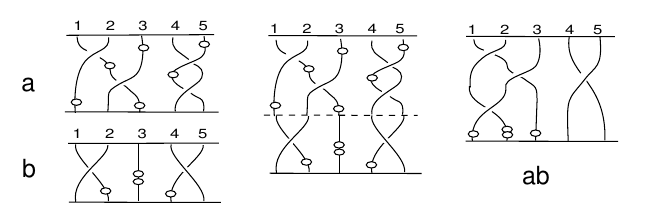}
 \caption{  Two elements of $\mathcal{F}_{3,5}$ and their  product. }\label{Fig00a}
 \end{figure}


\subsection{The framed    set partition monoid}
 The  first  framed  monoid  we  introduce  here is  obtained  by  the framization of  the  set partition  monoid  $P_n$.

The elements of $P_n$ can be represented diagrammatically by $n$  parallel vertical lines,   corresponding to the elements, and horizontal wave lines, called {\it  ties}, each one connecting a   pair of lines. If two  lines   are  connected  by a tie,  the  corresponding  elements  belong to  the  same  block of  the partition. In diagrammatic terms, the element $e_{i,j}$    corresponds to a single  tie connecting the lines $i$ and $j$.
 The  diagrammatic  representation is  standard  if  the ties  of a  block  $\{a_1< a_2 <\ldots< a_m\}$  are  exactly  the  $m-1$  ties connecting  $a_i$ to  $a_{i+1}$.  The  standard  representation is  unique. For instance,  Figure \ref{Fig000a} represents the set partition $\{\{1,2,4\},\{3,5\} \}$   as product of partitions   $\{\{1,4\},\{3,5\} \} $ and $\{\{2,4\}  \} $.

 \begin{figure}[H]
 \includegraphics[scale=0.9]{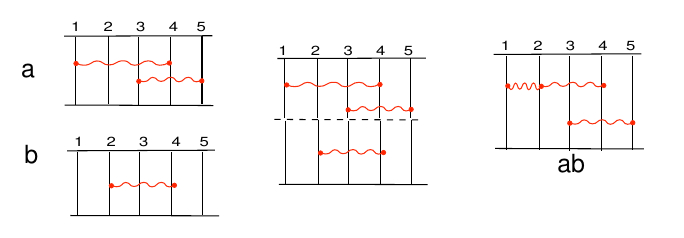}
 \caption{  Two elements of $P_5$ and their  product.}\label{Fig000a}
 \end{figure}
\begin{definition} \label{Pdn} The $d$-framed set partition monoid $P_{d,n}$ coincides with  $P_n$  for $d=1$, and for $d>1$ is the monoid obtained by putting  together the presentation of  $P_n$ by  the  $e_{i,j}$  subject to   (\ref{Pn1})--(\ref{Pn3})
 with that of $C_{d,n}$ given in (\ref{PreCdn}), and adding the following relations:
\begin{equation}\label{PrePdn}
z_ke_{i,j} = e_{i,j}z_k,\quad z_ie_{i,j} = z_je_{i,j}.
\end{equation}
\end{definition}

Here again we can consider the  abacus version of this monoid. Thus equation  (\ref{PrePdn})
  says that the tie  $e_{i,j}$   allows  the  beads    to  move  freely  from strand $i$ to  strands  $j$  and  vice-versa, see  Figure \ref{Fig0000a}.

\begin{figure}[H]
 \includegraphics[scale=0.9]{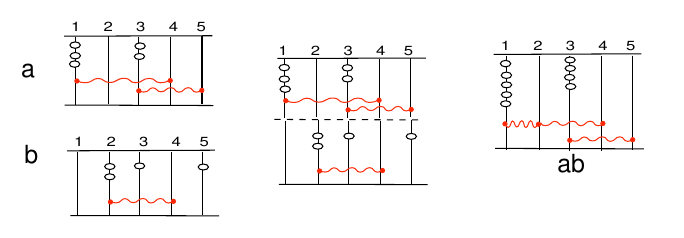}
 \caption{Two elements of $P_{7,5}$ and their  product.}\label{Fig0000a}
 \end{figure}

\subsection{Framed Jones monoid}  Before introducing the framed Jones monoid, we need to remember some facts about the Jones monoid $J_n$ and  its  diagrammatical  interpretation. 
Namely, we will recall its diagrammatic realization and the normal form of its elements. 
More precisely,  consider the monoid formed by  diagrams  made by $n$  noncrossing  arcs,  with endpoins on the integer points  $1,\dots, n$ of  two parallel  lines.  The product is  defined by concatenation, in which the loops are neglected.  See  Figure \ref{Fig01}.
 In \cite{BDS}, \cite{East2021} it is proved that  a presentation of this monoid  is given by relations \eqref{J01} and \eqref{J02}.
 \begin{figure}[H]
 \includegraphics[scale=0.8]{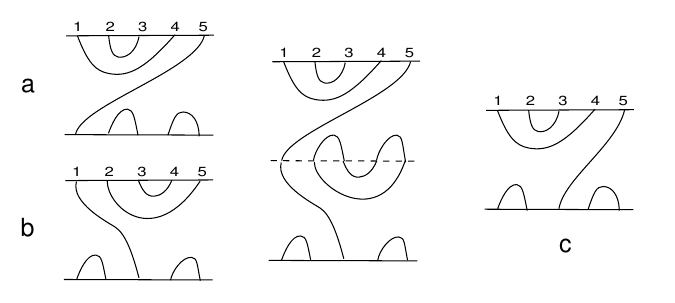}
 \caption{  Two elements of $J_5$ and their  product.}\label{Fig01}
 \end{figure}
\vskip10pt

{\bf Notation.} 
\begin{itemize}\item
In diagrammatic terms, the generator $t_i$ is denoted by $\T_i$ and is the diagram with only the pair of brackets with endpoints $i$ and $i+1$, it is called {\it tangle} (cf. Figure \ref{Fig02}a).
If it is not necessary we will not distinguish between $t_i$ and $\T_i$.


\item 

  The two horizontal parallel segments where the  endpoints of the arcs  of a  diagram lie, are  said  respectively the {\it top}  and  the {\it bottom segment}.
 An arc  with  both  endpoints on  the top (bottom)  segment  is called  {\it up-bracket} ({\it down-bracket}).  Otherwise it is  called  {\it line}. Moreover,    we  call  {\it ne-line} ({\it nw-line}) a  line  whose  lower endpoint  is smaller (greater)  than  the upper one. For instance,  the diagrams (a) and (c) in Figure \ref{Fig01}  have a sole ne-line,  and the diagram   (b) has a sole nw-line.
\end{itemize}
\vskip5pt

Recall that every element in $J_n$ can be written in a normal form, see  \cite[Aside 4.1.4]{JoIM1983}. More precisely, for  $i\geq j$, we set $
 U_{ij}=t_it_{i-1}\cdots t_j$.
 Thus every element $w\in J_n$ can be written in a normal form $T^{+}(w)$, where
\begin{equation}\label{mon1}
T^{+}(w) = U_{i_1j_1}\cdots U_{i_k j_k},\quad i_1<i_2\ldots<i_k,\,
j_1<j_2\ldots<j_k.
\end{equation}
The dual  $T^{-}(w)$ of $T^{+}(w)$ is    another    normal form of  $w$ given by
 \begin{equation}\label{mon2}
 T^{-}(w)= V_{i_1j_1}\cdots V_{i_h j_h},\quad i_1>i_2\ldots>i_h,\,
j_1>j_2\ldots>j_h,
\end{equation}
where, for  $i\leq j$, $V_{ij}:=t_it_{i+1}\cdots t_j$.

\begin{definition}\label{Not2}\rm We say that   an integer $g\in\ldbrack 1,n-1\rdbrack$ is a {\it  gap} of $w\in J_n$, if the  indices  $g$  and  $g-1$ do not appear in the normal  form of $w$.  Note that   the notion of gap   is  independent  from the  normal  form we use. We  denote  by $\underline j_1,\dots,\underline j_k$  the  second indices   in the $U_{ij}$  of  $T^{+}(w)$ and  $\overline i_1,\dots,\overline i_h$    the first indices  in the $V_{ij}$  of  $T^{-}(w)$.
\end{definition}

\begin{example}\rm  The diagram in Figure \ref{Fig01}a  is  represented  by the  normal forms
\begin{equation}\label{normalform} T^+(w) =U_{2,1} U_{3,2}U_{4,4}= t_2  t_1  \ t_3 t_2 \ t_4= t_2 t_3  t_4  \  t_1 t_2= V_{2,4} V_{1,2}=T^-(w).  \end{equation}
See  Figure \ref{Fig02}b.

\begin{figure}[H]
\includegraphics[scale=0.8]{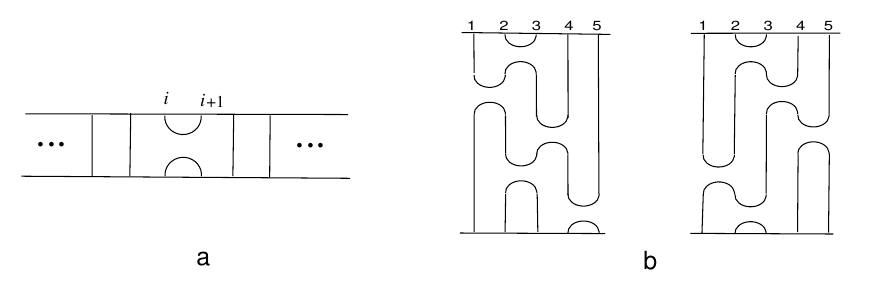}
\caption{a) The diagram of the tangle generator $\T_i$. b) The diagrams of  the normal  forms $ T^+(w)$ and $T^-(w)$  for  the  element in Figure \ref{Fig01}a. }\label{Fig02}.
\end{figure}
\end{example}

\begin{remark}\label{short} The  number of  generators $t_i$ in the normal forms is  minimal  in the set of  the representatives of  the same  element in $J_n$.  Moreover,  each  vertical  segment in the diagram of  a normal  form,  lying on  the $i$-{\it th} vertical segment    of the  identity, terminates either on the top or bottom line, or  on a  tangle $t_i$  or $t_{i-1}$. If  this  segment  terminates on two  tangles,  these  tangles  are necessarily different, i.e., $t_i$ on one  end  and   $t_{i-1}$ on the other  end.
\end{remark}

 \begin{proposition}\label{endpoints}  If $g$ is  a  gap of $w\in J_n$, then  the  diagram  of $w$ has  a  line  with  endpoint  $g$ in both  top and bottom segment,  and  the diagram of the normal  form  has  a vertical  line at $g$. Moreover,  the integers  $\underline j_1,\dots,\underline j_k$    are  either the left points  of     down-brackets or  the lower points of  ne-lines; the integers  $\overline i_1,\dots ,\overline i_h$    are  either the left points  of    up-brackets or  the upper points of  nw-lines.
 \end{proposition}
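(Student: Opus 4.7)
The plan is to establish the three assertions separately: (1) is a direct check on which generators appear, (2) proceeds by induction on $k$ using the diagrammatic shape of each $U_{i,j}$, and (3) follows from (2) by the vertical-flip duality between $T^+$ and $T^-$.

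For (1), note that the generators appearing in $U_{i_r,j_r}=t_{i_r}t_{i_r-1}\cdots t_{j_r}$ are indexed by $[j_r,i_r]$, so if $g$ is a gap, no $t_{g-1}$ or $t_g$ appears anywhere in $T^+(w)$. Since a generator $t_\ell$ only touches positions $\ell$ and $\ell+1$ of the identity diagram, the $g$-th strand remains vertical throughout the product, yielding a vertical line from top $g$ to bottom $g$ in the normal-form diagram; because elements of $J_n$ are determined by their noncrossing pairings, the same line appears in the diagram of $w$ itself.

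For (2), I adopt the convention that in $ab$ the factor $a$ sits on top of $b$, so that $U_{i,j}$ has diagram: top up-bracket $\{i,i+1\}$, bottom down-bracket $\{j,j+1\}$, nw-lines from top $m$ to bottom $m+2$ for $m\in[j,i-1]$, and verticals elsewhere; its activity is confined to the interval $[j,i+1]$. I induct on $k$. The base case $k=1$ is immediate. For the inductive step, write $T^+(w)=T^+(v)\cdot U_{i_k,j_k}$. The factor $U_{i_k,j_k}$ supplies a down-bracket $\{j_k,j_k+1\}$ on the new bottom, handling $\underline{j}_k$. The key diagrammatic lemma is that in $T^+(v)$ every bottom position $p\geq i_{k-1}+2$ is a vertical, since each earlier factor $U_{i_s,j_s}$ acts only inside $[j_s,i_s+1]\subseteq[1,i_{k-1}+1]$. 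For $r<k$, I trace from bottom-$j_r$ in $T^+(w)$: the inequality $j_r<j_k$ makes $U_{i_k,j_k}$'s top at $j_r$ a vertical, so the trace arrives at $T^+(v)$'s bottom $j_r$, whose partner $q$ in $T^+(v)$ satisfies $q>j_r$ by the inductive hypothesis. The possible exits are: $q$ a top vertex (ne-line endpoint at top-$q$); $q<j_k$ or $q\in[j_k,i_k-1]$ (a vertical or nw-line of $U_{i_k,j_k}$ producing a new-bottom endpoint $q$ or $q+2$); $q=i_k$ (cascade through $U_{i_k,j_k}$'s up-bracket to $T^+(v)$'s bottom $i_k+1$, which is vertical by the lemma and exits to top $i_k+1$); and $q=i_k+1$ is excluded because the lemma forces this vertex to pair with top-$(i_k+1)$, not with $j_r$. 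In every case $j_r$ is paired with an endpoint strictly greater than itself, so it is the left of a down-bracket or the lower of a ne-line in $T^+(w)$.

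For (3), the vertical flip $w\mapsto w^*$ is an antiautomorphism of $J_n$ fixing each $t_i$, so $U_{i,j}\mapsto V_{j,i}$ and hence $T^-(w)\mapsto T^+(w^*)$; the flip swaps up-brackets with down-brackets and nw-lines with ne-lines (upper/lower endpoints interchange). Applying (2) to $T^+(w^*)$ and translating back through the flip yields (3). The main obstacle is the cascade subcase $q=i_k$ in the proof of (2): a priori one fears the trace could re-enter $T^+(v)$ and eventually terminate at a bottom position less than $j_r$, breaking leftmost-ness. The strict-increase condition $i_k>i_{k-1}$ built into the normal form is precisely what rules this out, by forcing $i_k+1$ to lie beyond the activity range of $T^+(v)$ and hence to be a vertical that exits the trace upward.
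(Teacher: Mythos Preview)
Your proof is correct, but it takes a different route from the paper's for parts (2) and (3).

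For (2), the paper gives a one-line direct argument: in $T^+(w)=U_{i_1j_1}\cdots U_{i_kj_k}$ the generator $t_{j_r}$ at the end of $U_{i_rj_r}$ is the \emph{last} occurrence of $t_{j_r}$ in the whole word (since every later block $U_{i_sj_s}$ has $j_s>j_r$). Hence every generator to its right has index $>j_r$, so in the stacked diagram position $j_r$ is vertical below this cap, and bottom-$j_r$ connects straight to the left lower corner of that $t_{j_r}$; the conclusion then follows from planarity. For (3) the paper runs the mirror argument with $T^-(w)$, observing that the $t_{i_r}$ beginning $V_{i_rj_r}$ is the \emph{first} occurrence of $t_{i_r}$.

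Your approach instead proves (2) by induction on the number $k$ of blocks, threading each $\underline j_r$ through the new bottom factor $U_{i_k,j_k}$ via an explicit case analysis, and then derives (3) from (2) by the flip antiautomorphism $w\mapsto w^*$. This is longer but entirely self-contained: every exit of the trace is accounted for, and the delicate subcase $q=i_k$ is handled cleanly using $i_k>i_{k-1}$. The paper's argument is slicker and avoids induction altogether, while your argument makes explicit what the paper's terse ``So it is either\ldots'' is relying on. Your use of the flip for (3) is a nice economy that the paper does not exploit.
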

 \begin{proof} The diagram of the normal  form coincides   with that of the identity on the gap points.  The  first statement follows from the fact that  the    arc endpoints  are   invariants  of the set of representatives of the same element of the  monoid.  Consider now  a monomial  $U_{ij}$  and  observe that  the generator $t_j$  in it is the  last $t_j$ in $T^{+}(w)$. Since  the  generators at  right of $t_j$ in $T^{+}(w)$ have indices  higher than $j$, and so the corresponding tangles  are  at  right of the tangle  $t_j$ in the diagram,  the  left lower point   of this tangle is straightly connected to  the point  $j$ of the bottom segment. So it is  either  the  left point of a down-bracket or the lower point of  a  ne-line. Similarly, the generator $t_i$  in $V_{ij}$  appears  in $T^{-}(w)$ for the first time.  Since  the  generators at  left of $t_i$ in $T^{-}(w)$ have indices  higher than $i$, and so the corresponding tangles  are  at right of the  tangle $t_i$, the  left upper point  of this tangle  is straightly connected to  the point  $i$ of the top segment. So it is  either  the  left point of an up-bracket or the upper point of  a  we-line.
 \end{proof}

\begin{example}\rm See  Figure \ref{Fig02}b.  By (\ref{normalform}),  we  have $\underline j_1, \dots ,\underline j_3=1,2,4 $  and  $\overline i_1, \overline i_2= 2,1$. In the  bottom line of the diagrams in (b), 1 is the lower points  of a ne-lines and 2,4 are  left points of down brackets, while in the top line  1,2 are left points of  up brackets.
\end{example}

We are now ready to introduce and study the framed Jones monoid.

\begin{definition}
 We define the framed Jones monoid, denoted by $J_{d,n}$, as the monoid generated by $t_1, \ldots ,t_{n-1}$, $z_1,\ldots , z_n$ satisfying the relations (\ref{J01})-(\ref{J02}) and  (\ref{PreCdn}) together the following mixed relations.
\begin{align}
t_iz_i  = t_iz_{i+1}, \quad &z_i t_i  = z_{i+1} t_i,\label{Z2}\\
z_i t_j =  t_jz_i \quad &\text{for $i\not=j,j+1$,}\label{Z3}\\
  t_iz_i^k t_i  =  &  t_i.\label{Z4}
\end{align}
\end{definition}
Our next goal is to   provide a diagrammatic version of  $J_{d,n}$, which we call  the {\it abacus Jones monoid} and denote by  $\mathring{J}_{d,n}$.

\begin{definition}
     The abacus  Jones monoid $\mathring{J}_{d,n}$ is formed by the  diagrams of $J_n$ whose arcs have at most $d-1$  beads  sliding  on each arc. 
The product in  $J_{d,n}$ is defined by the usual concatenation,  by counting  beads modulo $d$, and   by neglecting the  loops containing or  not  beads.
\end{definition}
\begin{example}\rm See  Figure  \ref{Fig03}
\begin{figure}[H]
\includegraphics[scale=0.8]{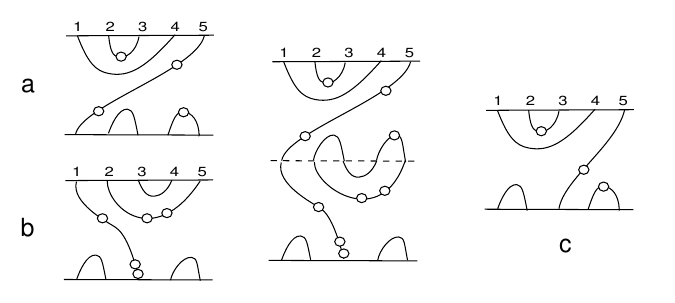}
\caption{  Two elements of $\mathring{J}_{4,5}$ and their  product. }\label{Fig03}
\end{figure}

\end{example}

\begin{remark} \label{Jones} Observe that the abacus monoid $C_{d,n}$ is a submonoid of $\mathring{J}_{d,n}$. Also,  the set formed by the tangle diagrams without beads,  $\mathring{J}_{1,n}$,    is a  submonoid of $\mathring{J}_{d,n}$
for every $d$. This submonoid  coincides with the Jones monoid $J_n$.
The  cardinality of   $\mathring{J}_{d,n}$
 is evidently  $d^n \cdot Cat_n$.
\end{remark}


\begin{lemma}\label{Lem01}
Let
$\mathring{w}\in \mathring{J}_{d,n}$, and   let $w$ be obtained from $\mathring{w}$ by removing the  beads. Let $g_1, \ldots , g_p$ be the gaps
of $w$ and $\overline  i_1,\dots, \overline i_h$, $\underline j_1,\dots, \underline j_k$ 
associated to $w$
 according to  Definition \ref{Not2}.
Then $\mathring{w}$ can be  written uniquely  in the following normal form
  \begin{equation} \label{dotnormalform} \OO_{g_1}^{q_1} \cdots  \OO_{g_p}^{q_p} \ \OO_{\overline i_1}^{r_1}\dots \OO_{\overline i_h} ^{r_h} \
T^{+}(w)
  \  \OO_{\underline j_1}^{s_1}  \dots \OO_{\underline j_k} ^{s_k}, \end{equation}
 where  $q_i,r_i,s_i\in\ZZ/d\ZZ$.
  \end{lemma}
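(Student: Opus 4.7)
The plan is to match each arc of $\mathring{w}$, viewed as a diagram, with exactly one index from the union $\{g_1,\ldots,g_p\}\cup\{\overline i_1,\ldots,\overline i_h\}\cup\{\underline j_1,\ldots,\underline j_k\}$, and then read off the exponent at that index as the bead count (modulo $d$) on the corresponding arc. The first step is to check that these three index sets are pairwise disjoint and together enumerate the $n$ arcs of $w$. By Proposition \ref{endpoints}, the $g_\ell$ parametrize the vertical arcs at the gaps, the $\underline j_\ell$ parametrize down-brackets and ne-lines through their distinguished bottom endpoints, and the $\overline i_\ell$ parametrize up-brackets and nw-lines through their distinguished top endpoints. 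Since every arc of $w$ falls into exactly one of these five classes, and each class contributes the stated number of labels, the union biject with the set of arcs; comparing endpoint counts on the top and bottom segments then forces $p+h+k=n$.

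For existence I would proceed as follows. Given $\mathring{w}$, remove the beads to obtain $w\in J_n$ and fix $T^+(w)$. For each arc $\alpha$ of $w$, let $c_\alpha\in\ZZ/d\ZZ$ be the number of beads on $\alpha$, and define $q_\ell,r_\ell,s_\ell$ to be $c_\alpha$ where $\alpha$ is the arc labelled by $g_\ell,\overline i_\ell,\underline j_\ell$ respectively. Evaluating the expression (\ref{dotnormalform}) as a diagram, the arc structure is that of $w$ (each $\OO_i$ is a bead on an identity strand) and the beads distribute arc by arc as intended: $\OO_{g_\ell}^{q_\ell}$ contributes $q_\ell$ beads to the vertical arc at $g_\ell$; the block $\OO_{\overline i_\ell}^{r_\ell}$ preceding $T^+(w)$ deposits $r_\ell$ beads onto the arc of $T^+(w)$ whose top endpoint is $\overline i_\ell$; the symmetric block $\OO_{\underline j_\ell}^{s_\ell}$ following $T^+(w)$ does the same at the bottom. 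Because the three index sets are disjoint and the arcs of $w$ are noncrossing, the bead placements do not interact, so the resulting diagram equals $\mathring{w}$.

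For uniqueness, two expressions of the form (\ref{dotnormalform}) that are equal in $\mathring{J}_{d,n}$ must induce the same bead count on every arc; the bijection from the first step then forces the exponents $q_\ell,r_\ell,s_\ell$ to match term by term.

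The main obstacle I anticipate is the bookkeeping in the first step: leveraging Proposition \ref{endpoints} to conclude that the five-way classification (gap, up-bracket, down-bracket, ne-line, nw-line) yields the bijection between arcs and indices in a consistent manner, and thereby justifying the identity $p+h+k=n$ used implicitly throughout. Once this bijection is secured, the remainder is essentially diagrammatic, since $\OO_i$'s with distinct indices commute and beads placed at a specific top or bottom position simply sit on the arc with that endpoint.
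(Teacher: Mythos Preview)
Your argument is correct and essentially matches the paper's own proof: both identify, via Proposition~\ref{endpoints}, each arc of $w$ with exactly one index among the gaps, the $\overline i_\ell$'s, and the $\underline j_\ell$'s, and then take the exponent at that index to be the bead count on the corresponding arc. The paper phrases the existence step more operationally (slide all beads to the leftmost endpoint of their arc, yielding a factorization $\mathring{w}=o\,w\,o'$), but this is just a diagrammatic restatement of your construction.
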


\begin{proof} Given $\mathring{w}\in \mathring{J}_{d,n}$, let's move  the beads on each arc  to its endpoint at  left. For  a  ne-line this  endpoint is  on the  bottom line  and  for  a nw-line  it is on the  top  line.  In a  vertical  line  move  the beads to the  top.  So,  $\mathring{w}$ results in this  way   the product $\mathring{w}= o w o'$  where  $o$  and $o'$  are  products of  $\OO_i$,  and  $w\in J_n$.  Now, $w$ has  a  unique  normal  form  $T^{+}(w)$,   the  indices  $i$ of  the $\OO_i$  are  uniquely defined by  Proposition \ref{endpoints}, and the  exponents $q_i,r_i,s_i$ count modulo $d$ the  beads on each arc.
\end{proof}

\begin{example}\rm \label{exa5} The normal  form    for  the  element in Figure \ref{Fig03}a is $\OO_2 \T_2 \T_1 \T_3 \T_2 \T_4 \OO_1^2 \OO_4$, see its diagram in Figure \ref{Fig05}.

\begin{figure}[H]
\includegraphics[scale=0.8]{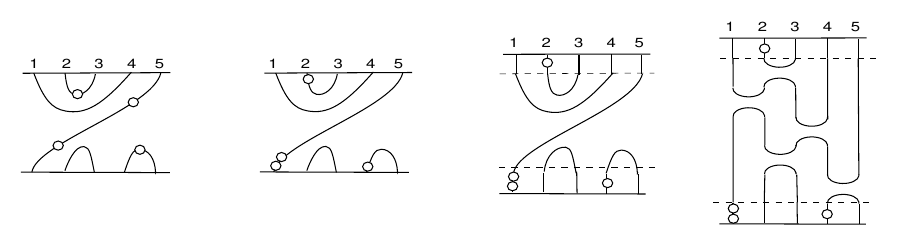}
\caption { The diagram  of  the normal  form   for  the  element in Figure \ref{Fig03}a.}\label{Fig05}
\end{figure}

\end{example}

\begin{remark} We have  $p+h+k=n$, since   the  numbers of  up-brackets,   down-bracket,  ne-lines, nw-lines  and  vertical lines  is  evidently $n$. Therefore, for  every  $w$,  there are $d^n$ possible choices of the  exponents $q_i,r_i,s_i$.   Thus, the  cardinality of
  $\mathring{J}_{d,n}$
 is  recovered.
\end{remark}

 \begin{theorem}\label{propJdn} The map \begin{equation}\psi: \label{assgn}t_i\mapsto \T_i,\,z_j\mapsto \OO_j, \quad i\in [ 1, n-1], j\in [1, n],\end{equation} extends to a monoid isomorphism $J_{d,n }\to \mathring J_{d,n}$. In particular, 
 $|J_{d,n}| = d^n Cat_n$.
  \end{theorem}
   \begin{proof} Consider  the monoid generated by 
  $$\{\T_1,\ldots,\T_{n-1},\OO_1,\ldots,\OO_n\}=\psi(\{t_1,\ldots,t_{n-1},z_1,\ldots,z_n	\})$$ with relations  \begin{align}
   \T_i^2  &=\T_i,   \label{O01} \\
 \T_i\T_j   &=\T_j\T_i \quad \text{if $\vert i-j\vert>1$, and }
\quad \T_i\T_j\T_i   =\T_i \quad \text{if $\vert i-j\vert=1$.} \label{O002}\\
 \OO_i^d  &=1,\quad \OO_i \OO_j = \OO_j \OO_i,\label{O1}\\
\T_i\OO_i  &= \T_i\OO_{i+1}, \quad \OO_i \T_i  = \OO_{i+1} \T_i,\label{O2}\\
\OO_i \T_j &=  \T_j\OO_i \quad \text{for $i\not=j,j+1$,}\label{O3}\\
\T_i\OO_i^k \T_i  &=    \T_i. \label{O4}
 \end{align}
 Note that \eqref{O01}-\eqref{O4}  are the images via $\psi$ of the  relations  \eqref{J01}-\eqref{J02}, \eqref{Z2}-\eqref{Z4} defining $J_{d,n}$.  In particular, $\psi$ extends to a monoid map $J_{d,n }\to \mathring J_{d,n}$.  The map $\psi$ is  surjective because  the elements  $\T_i, \OO_i$
generate $ \mathring J_{d,n}$ by Lemma \ref{Lem01},  and  by observing  that    different normal  forms  are   images by $\psi$ of different  elements  of  $J_{d,n}$.   The latter claim  follows from the  fact that  in absence  of   generators  $\OO_i$  the  normal  forms    of  $J_n, \mathring J_{d,n}$ coincide and that by the  relations \eqref{O01}-\eqref{O4} a  bead  neither can be created nor can jump  from  a  strand  to another one. Finally, in the next Lemma \ref{Lem02} we prove  that every  word in the  generators  $\T_i$  and $\OO_i$ can be put, by using all  relations  (\ref{O01})--(\ref{O4}), in the  normal form defined in Lemma  \ref{Lem01}.
 This shows that $|J_{d,n }|\leq |\mathring J_{d,n}|$ and finishes the proof.
\end{proof}

 \begin{lemma}\label{Lem02} Every word  in the  generators  $\T_i$ and $\OO_i$ can be put, in $\mathring{J}_{d,n}$, in the  normal form  (\ref{dotnormalform}).
  \end{lemma}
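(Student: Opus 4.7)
The plan is to give an explicit terminating rewriting procedure that transforms any word $W$ in the generators $\T_i,\OO_i$ into the normal form of Lemma \ref{Lem01}, using only relations (\ref{J01})--(\ref{J02}) and (\ref{O1})--(\ref{O4}). I would proceed in three stages.

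\emph{Stage one: expel all bead blocks from between tangles.} Given an interior factor $\T_j \OO_i^k$ of $W$, relation (\ref{O3}) commutes the bead block past $\T_j$ whenever $i\notin\{j,j+1\}$, and relation (\ref{O2}) lets me convert $\OO_j^k \T_j$ into $\OO_{j+1}^k \T_j$ (and the symmetric move on the right side of the tangle). Iterating, each bead block migrates outward; the only obstruction is a block $\OO_i^m$ squeezed between two tangles both having indices in $\{i,i+1\}$. In this case, after aligning indices by (\ref{O2}) so that the trapping tangles are both $\T_i$ with a pure $\OO_i^m$ in between, relation (\ref{O4}) collapses $\T_i \OO_i^m \T_i$ to $\T_i$, eliminating the trapped block together with one tangle. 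I would prove termination using the lexicographic measure (number of $\T$-letters, number of interior $\OO$-letters).

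\emph{Stage two: reduce the tangle skeleton.} After Stage one, $W$ has the form $o\cdot u\cdot o'$ with $u$ a word in the $\T_i$'s alone; the Jones relations (\ref{J01})--(\ref{J02}) rewrite $u$ as $T^+(w)$, the unique normal form of the element $w\in J_n$ it represents, by \cite[Aside 4.1.4]{JoIM1983}. If a local Jones reduction (say $\T_i^2 \to \T_i$ or $\T_i\T_j\T_i\to \T_i$ with $|i-j|=1$) is blocked by residual $\OO$-factors that reappear between these tangles after the rewriting of $u$, I interleave Stage one to push or absorb them first. \emph{Stage three: place each bead at its prescribed slot.} By Proposition \ref{endpoints}, every strand of the diagram of $T^+(w)$ belongs to exactly one of: a gap index $g_s$, an upper endpoint $\overline i_s$ (top of an up-bracket or nw-line), or a lower endpoint $\underline j_t$ (bottom of a down-bracket or ne-line). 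Using (\ref{O3}) to commute across tangles with far indices and (\ref{O2}) to slide a bead across an adjacent tangle (thereby relocating it to the correct arc-endpoint), each bead in $o$ and $o'$ is moved to its unique prescribed position. Finally (\ref{O1}) collects beads on the same strand and reduces exponents modulo $d$, yielding the normal form (\ref{dotnormalform}).

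The main obstacle I foresee is controlling Stage one: I must argue that, after all legitimate applications of (\ref{O2}) and (\ref{O3}), every remaining interior bead block is in fact positioned so that (\ref{O4}) applies, i.e., between two copies of the \emph{same} $\T_i$ separated only by a pure $\OO_i$-power. The delicate point is that each application of (\ref{O2}) flips a bead's strand index, so a naive iteration could shuffle beads back and forth without progress. I would circumvent this by tying the rewriting to the Jones reduction of the skeleton: whenever a subword $\T_i\T_i$ or $\T_i\T_j\T_i$ ($|i-j|=1$) becomes available in the skeleton, use it immediately as a trapping site for any beads located between those tangles, applying (\ref{O4}) before the Jones relation. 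This ordering, combined with the fact that Jones reductions strictly decrease the number of $\T$-letters, guarantees termination and forces the output into the unique normal form of Lemma \ref{Lem01}.
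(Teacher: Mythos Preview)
Your approach is essentially the paper's, just reorganized: both shuttle beads via (\ref{O2})--(\ref{O3}), absorb trapped beads via (\ref{O4}), reduce the $\T$-skeleton via (\ref{J01})--(\ref{J02}), and finally slide each bead to its designated endpoint. The main difference is that the paper reverses your first two stages: it reduces the $\T$-length first, carrying any obstructing bead along as a compound token $\T_i\OO_i^k$ and showing explicitly how to discharge it upon meeting $\T_{i\pm 1}$ (e.g.\ $\T_i\OO_i^k\T_{i+1}=\OO_{i+2}^k\T_i\T_{i+1}$). This sidesteps exactly the termination worry you raise in your last paragraph, since each step is tied to a Jones reduction of the underlying skeleton rather than to an independent bead-expulsion process. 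One small imprecision in your Stage~1: relation (\ref{O2}) changes only the bead index, never the adjacent tangle, so you cannot ``align'' distinct neighbours $\T_{i-1},\T_i$ to both become $\T_i$; but in that mixed case the bead actually escapes by (\ref{O2}) followed by (\ref{O3}), so (\ref{O4}) is only ever needed when the immediate neighbours already coincide. For Stage~3 the paper is more explicit than your sketch: it monotonically decreases the bead's index via (\ref{O2}) and invokes Remark~\ref{short} (in the normal form a vertical segment meets $\T_j$ on one end and $\T_{j-1}$ on the other, never the same tangle twice) to guarantee that the bead eventually exits at the top or bottom of the word.
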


\begin{proof} Let $ w$ be a word in the generators $\T_i$ and  $\OO_i$. Observe  that  by  definition of  concatenation,  the diagram of $ w$  is  an  element of $\mathring J_{d,n}$  and  hence it has  a normal  form.  We call t-length of  $  w$ the  number of  generators $\T_i$ in it. Now, if  $ w$ does not contain generators  $\OO_i$, its t-length can be  decreased till  the minimal  length of the normal form, by using the  relations (\ref{J01})-(\ref{J02}), by  Remarks \ref{short} and \ref{Jones}.  We prove that this can be done  also in presence of  generators  $\OO_j$, by using the relations of $\mathring{J}_{d,n}$.  To  reduce the t-length, we have to use  the second relation in (\ref{J02}) and relation (\ref{O2}); i.e, we have to collect  together    generators forming the  subwords
\begin{equation}\label{subwords}\T_i \T_{i\pm 1}\T_i,  \quad \T_i \OO_i^k \T_i , \quad   \T_i \OO_{i+1}^k \T_i, \quad  k\in  \ZZ/d\ZZ,\end{equation}
which  will be  replaced by $\T_i$.   To do  that, suppose we have to move $\T_i$ at right of  a subword $ v$, and that  this  is allowed in absence of the $\OO_j$ in $v$. This  means $ v$   contains only generators $\T_k$ with  $|k-i|>1$. If each $\OO_j$ in $ v$ satisfies  $j\not=i, i+1$, then  we  get  $\T_i   v=  v \T_i $  by  (\ref{O3}). So, $\T_i$ cannot move at  right in $  v$ if it meets either $\OO_i$ or $\OO_{i+1}$.  In both these  cases, by (\ref{O2}), we consider $\T_i \OO_i$  as a binomial  that has to be moved  at  right  and  so on. Because of relations (\ref{O1}), we get  eventually  $\T_i   v =  v' \T_i \OO_i^k$, $k\in \mathbb{Z}_d$. If $  v$ was  followed  by $\T_i$, we are done. If $ v$ was  followed  by $\T_{i+1} $, and $k>0$, then  we write   $\T_i \OO_i^k \T_{i+1}  =  \T_i \OO_{i+1}^k  \T_{i+1}  =\T_i \OO_{i+2}^k  \T_{i+1}   =  \OO_{i+2}^k \T_i \T_{i+1}  $. Similarly, if $  v$ was  followed  by $\T_{i-1}  $,    we write  $\T_i \OO_i^k \T_{i-1}  =  \T_i \OO_{i+1}^k  \T_{i-1}  =\T_i   \T_{i-1} \OO_{i+1}^k $.  We will follow an  analogous procedure to  move  $\T_i$ at left of  a  word  $v$. Therefore,   we can collect  together the generators  to form the  subwords in (\ref{subwords}).  Proceeding in this  way,  we get a  word  $ w'$  which  contains  exactly the  same $\T_i$  generators  as  the  normal form of  $w$.

Now we have to collect  the $\OO_j$ at  the beginning and at the  end of the  word.  Observe  that, if  there  is some  $\OO_j$ such that  $j$ is  a  gap, such $\OO_j$ commutes  with all other  generators in $w'$  and  then it can be  moved  at  the beginning of the word. If  $j$ is not a  gap, the moving of  $\OO_j$  at  right or  at  left in the  word, is stopped  when it meets  $\T_j$ or $\T_{j-1}$.  Observe that, diagrammatically, the move of $\OO_j$  corresponds  to  a  sliding of  the  bead up and  down  along a  vertical  segment.  Therefore, we start  from the first $\OO_j$ in $w'$ which is  not a  gap  and proceed  as follows. Our  aim is  to decrease the  index of $\OO_j$  by relation (\ref{O2}). We consider  one  case, the others  are obtained  by  exchanging left with right.   If  moving  $\OO_j$ at  right we  meet   $\T_j$,  we move it at  left.  Then  either it  reaches  the  beginning of the  word,  or,  by  Remark  \ref{short}, it meets $\T_{j-1}$.  Then $\T_{j-1}\OO_j$ is  replaced  by $\T_{j-1} \OO_{j-1}$.  Again we  move  $\OO_{j-1}$ at right. It can now overpass $\T_j$.  Then, either  it  reaches  the  end of  the word, or it meets $\T_{j-2}$, by the same reason. So, $\OO_{j-1}\T_{j-2}$ is  replaced by  $\OO_{j-2}\T_{j-2}$ and  we  move  $\OO_{j-2}$  at left  and so on.  Eventually  we  get  $\OO_m$, with $m<j$,  at  the  beginning or  at  the  end  of  the  word.   This is done  for  each  $\OO_j$ in $ w'$, till  $  w'$ is  transformed  in a  product  $o w'' o'$,  such  that $w''$ does not contain $\OO_j$ and  can be  put in the  normal  form $T^+(w'')$. Moreover,  since  our procedure  has  only diminished  the indices  of  the $\OO_j$,   $o$     contains  $\OO_i$  only if   $i$ is  a gap, or  is  the left endpoint of  an up-bracket or  the  top  endpoint of  a nw-line,  and $o'$     contains  $\OO_i$  only if  $i$ is  the left endpoint of  a down-bracket or  the  lower  endpoint of  a ne-line.  The  rearrangements of  the  $\OO_i$  in $o$  and  $o'$ is made  by  using relations (\ref{O1}).  The normal  form  (\ref{dotnormalform}) is  attained.
\end{proof}

 \subsection{The Framed Brauer  Monoid }

 \begin{definition}
The framed Brauer monoid  $Br_{d,n}$ is the monoid   presented by generators  $s_1,\ldots ,s_{n-1}$, $t_1,$ $ \ldots, t_{n-1}$, $z_1, \ldots , z_n$ satisfying (\ref{B01})-(\ref{B03}), the defining relations of $S_{d,n}$
(Remark \ref{RemPre}) and $J_{d,n}$.
 \end{definition}

As in the case of the Jones monoid, we will introduce below,  in Definition \ref{DefAbacusBrauer}, 
the  abacus version $\mathring Br_{d,n}$ of $Br_{d,n}$. To do this, we first observe that the elements of 
  $Br_n$  can be  represented  by  a  diagram  with  $n$  arcs  with  endpoints  on  two  horizontal  lines, the  only  difference  being  that  here  the  lines  can  cross  each other.
  It is known that this monoid admits the  presentation displayed in Subsection \ref{subsectionJB}.\par
  We  recall  that
 in  \cite[Lemma 28]{AiArJuJPAA2023},  the  monoid $ Br_n$  has been provided  with a normal form for its elements, namely
\begin{equation}  s  \T_1\T_3 \dots \T_{2k-1}  s',  \label{nformbrauer}\end{equation}
where   $0\le k\le n/2$  is  the  number of up-brackets, and   $s,s'\in  S_n$.

The  normal  form is defined  in the following  way.
Let  $1,\dots, n$  be the points of  the top segment.  We  define the  set  $A\subset \ldbrack 1,n\rdbrack$ by putting $j\in A$ if  $j$ is the left point  of  an up-bracket. The  cardinality of  $A$ is $k\le n/2$. The  set  $A$ is  ordered by  the order in $\ldbrack 1,n\rdbrack$.  Then  we define the ordered  set  $B\in \ldbrack 1,n\rdbrack$  containing  the  respective right  endpoints of  the up-brackets.  In the  third set  $C\in \ldbrack 1,n\rdbrack$ we put  the $n-2k$ upper points of  the  lines, ordered  by their order  in $\ldbrack 1,n\rdbrack$.  For the   points $1,\dots, n$  in the bottom segment, we  define  similarly: the  ordered set  $A' \subset \ldbrack 1,n\rdbrack$ by putting   $j\in A'$ if  $j$ is the left point  of  a down-bracket, keeping the order of  $j\in \ldbrack 1,n\rdbrack$; the   ordered   set $B'$ containing  the    $k$  right  endpoints respecting the order in $A'$;  and  the  ordered  set  $C'$ containing the  lower points of  the  lines,    respecting the order  of the corresponding upper points in $C$.
Then  we  write the  normal  form as  (\ref{nformbrauer}),
where  $s$  is  the permutation  sending  the  $a_i\in A$ to  $2i-1$ and  $b_i\in B$ to $2i$, for  $i \in \ldbrack 1,k \rdbrack$, and  $c_i\in C$ to  $2k+i$  for  $i\in \ldbrack 1, n-2k\rdbrack$.  The  permutation  $s'$  sends $2i-1$ to $a'_i\in A'$ and  $2i$ to $b'_i\in B'$, for  $i=1,\dots, k$, and   $2k+i,\dots, n$ to $c'_i\in C'$, see  Example \ref{nf} and the following figure.
 \begin{figure}[H]
 \includegraphics[scale=0.7]{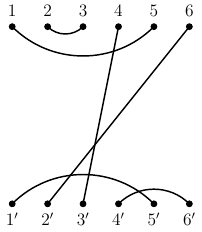}
 \quad
  \includegraphics[scale=0.7]{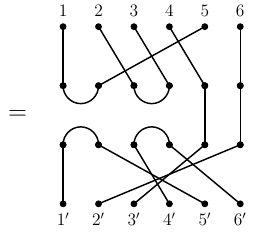}
 \caption{An element of $Br_6$  and its normal  form.}
 \end{figure}
 \begin{definition}\label{DefAbacusBrauer}
 We denote by  $\mathring{Br}_{d,n}$ the {\it abacus Brauer  monoid}, which is  formed by the  diagrams with $n$   arcs, with at most $d-1$  beads  sliding  on each arc.   Also  here the  product   is defined as usually  by concatenation, and  again by counting  beads modulo $d$  and neglecting the  loops containing or  not  beads, and containing or not crossings.    
\end{definition}

 Note that $\mathring{Br}_{1,n} =  Br_n$  is a submonoid of $\mathring{Br}_{d,n}$ for all $d,n$.
\begin{example}\rm See  Figure  \ref{Fig08}
\begin{figure}[H]
\includegraphics[scale=0.8]{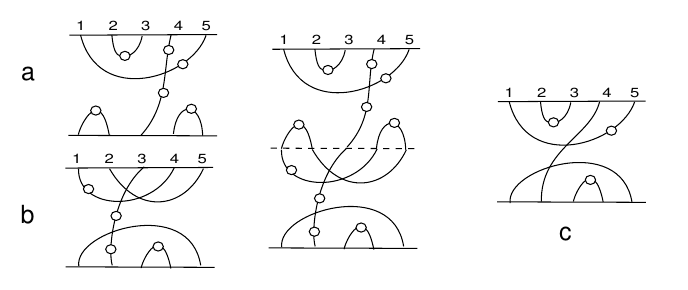}
\caption{  Two elements of $ \mathring{B}_{4,5}$ and their  product. }\label{Fig08}
\end{figure}
\end{example}

\begin{theorem}\label{Brauerabacus} The monoid   $\mathring{Br}_{d,n}$
 is generated by   $\T_1, \dots, \T_{n-1}$,  $\s_1,\dots, \s_{n-1}$, $\OO_1,\dots, \OO_n$,  subject to the  relations (\ref{Coxeter-Moore1}), (\ref{Coxeter-Moore2}), (\ref{J01})-(\ref{J02}), (\ref{B01})-(\ref{B03}),
(\ref{O1})-(\ref{O4}), and
 \begin{equation}\OO_j \s _i =   \s_i \OO_{\s_i(j)}.\label{OS}\end{equation}
  In particular, $\mathring{Br}_{d,n}$ is isomorphic to ${Br}_{d,n}$, which has therefore cardinality $(2n-1)!!d^n$.
 \end{theorem}
\begin{remark} A more formal statement, along the lines of Theorem \ref{propJdn} , would be: the map 
\begin{equation}\label{phi}\phi: s_i\mapsto  \s_i, t_i\mapsto \T_i, z_i\mapsto \OO_i\end{equation} extends to a monoid isomorphism ${Br}_{d,n}\to \mathring{Br}_{d,n}$.  \end{remark}

Observe that, diagrammatically, an element of   $S_n$  is  represented by  a diagram  with $n$  arcs, all having the upper points  on the  top segment  and  the  lower point on the bottom segment. Except that for  the  identity element, all  elements  have  diagrams  with   crossing  arcs.
 Relations (\ref{OS}) between  framing generators  and  elements  $\s_i$,   (see Remark \ref{RemPre}) mean, in term of  diagrams, that  the beads  generators   $\OO_i$    are sliding along  the  arcs  also in presence of crossings.   This  does not  contradict  a  topological  interpretation of  the diagrams as plane projection of  physical strings as in the  case of  the  braids.  Indeed,  we  may interpret the element $\s_i$  as  a  positive  crossing,  but  relation \eqref{Coxeter-Moore1} means that    positive and  negative  crossings  are  indistinguishable.

 Since the diagrams $\T_i,\OO_i,\s_i$ clearly satisfy the relations in the statement of Theorem \ref{Brauerabacus}, to complete its proof it suffices to  show   that the elements of  $\mathring{Br}_{d,n}$  admit  a normal  form  in terms of  the  generators,  and then that  every  element   can be  put in that  normal  form.
This is  achieved in the next three lemmas.

\begin{lemma}\label{Lem03}  Let
$\mathring{w}\in \mathring{Br}_{d,n}$, and   let $w$ be obtained from $\mathring{w}$ by removing the  beads.
Then $\mathring{w}$ can be  written uniquely  in the following normal form
  \begin{equation}
   o  N(w)  o',\label{nformbrauerabacus}
  \end{equation}
 where  $N(w)$ is  the normal  form of $w$ given by (\ref{nformbrauer}), $o$ is the product of  $\OO_j^{\epsilon_j}$ over all  $j$ on the upper  segment that are  either  left points of  up-brackets or upper points of  lines,  $\epsilon_j$  are the numbers of  beads on the  corresponding  arcs. Similarly, $o'$  is the product of  $\OO_j^{\epsilon'_j}$  over  all  $j$ that  are  the  left points of  down-brackets, with  $\epsilon'_j$  beads  respectively.
\end{lemma}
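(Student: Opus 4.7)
The plan is to mirror the strategy used for Lemma \ref{Lem01} in the Brauer setting, exploiting the diagrammatic interpretation of $\mathring{Br}_{d,n}$ as an abacus monoid in which beads may slide freely along each arc, even through crossings, by virtue of relations \eqref{OS}, \eqref{O2} and \eqref{O3}. Starting from $\mathring{w}\in \mathring{Br}_{d,n}$, I would, arc by arc, collect all beads on that arc at a distinguished endpoint: the top-left endpoint for every up-bracket, the top endpoint for every line (be it ne- or nw-), and the bottom-left endpoint for every down-bracket. Because each of the $n$ arcs is thereby assigned a unique endpoint index, and the generators $\OO_j$ appearing in $o$ (respectively in $o'$) commute among themselves by \eqref{O1}, the bead data of $\mathring{w}$ is encoded by exactly one exponent per arc.

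After this sliding step, $\mathring{w}$ factors diagrammatically as $o\cdot w\cdot o'$, where $w\in Br_n$ is the underlying crossing/arc pattern obtained by erasing the beads, and $o$ (respectively $o'$) is the product of the $\OO_j^{\epsilon_j}$ (respectively $\OO_j^{\epsilon'_j}$) whose indices $j$ lie on the top (respectively bottom) segment according to the convention just stated. Invoking the Brauer normal form recalled in \cite[Lemma 28]{AiArJuJPAA2023}, I would then replace $w$ by $N(w)$ as in \eqref{nformbrauer}, producing the claimed factorization $o\, N(w)\, o'$.

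For the uniqueness claim I would rely on two invariants of the diagram of $\mathring{w}$: first, the arc-connectivity (i.e., the matching pairing the $2n$ boundary points into $n$ arcs) is invariant under the monoid relations, so the set of indices appearing in $o$ and in $o'$ is uniquely determined; second, the number of beads lying on each arc, taken modulo $d$, is likewise invariant under the defining relations of $\mathring{Br}_{d,n}$. Since each arc contributes exactly one factor $\OO_j$, either to $o$ or to $o'$, the exponents $\epsilon_j,\epsilon'_j\in\ZZ/d\ZZ$ are thereby forced.

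The main technical point I expect to verify carefully is that sliding a bead along an arc that passes through other arcs is indeed implemented by the abacus relation \eqref{OS} (which transports a framing across a crossing) together with \eqref{O2} and \eqref{O3} whenever the arc meets a tangle generator; compared with the Jones case this is the genuinely new ingredient, since in $Br_n$ a single arc may thread through arbitrarily many crossings before reaching its designated endpoint. Once this bookkeeping is in place, the argument is entirely parallel to the Jones case, and the $(2n-1)!!$ choices of $w\in Br_n$ together with the $d^n$ independent choices of exponents reproduce the cardinality $d^n\cdot(2n-1)!!$ of $\mathring{Br}_{d,n}$ asserted in Theorem \ref{theoremBrdn}.
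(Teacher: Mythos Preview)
Your approach is essentially the same as the paper's: slide the beads on each arc to a canonical endpoint (left endpoint of each bracket, upper endpoint of each line), factor the diagram as $o\cdot w\cdot o'$, and then put $w$ into the Brauer normal form $N(w)$. Your uniqueness argument via the two diagrammatic invariants (the arc matching and the bead count on each arc modulo $d$) is correct and in fact more explicit than what the paper writes.

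One small point: the ``main technical point'' you flag---that sliding a bead through crossings is implemented by \eqref{OS}, \eqref{O2}, \eqref{O3}---is not actually needed here. Lemma~\ref{Lem03} is a statement about the \emph{abacus} monoid $\mathring{Br}_{d,n}$, which is defined diagrammatically, and there beads slide along arcs (through crossings or not) by definition of the diagram model. The relations \eqref{OS}, \eqref{O2}, \eqref{O3} belong to the abstract presentation $Br_{d,n}$, and showing that they suffice to reach the normal form is precisely the content of the companion Lemma~\ref{Lem04}. So you can drop that paragraph; the paper's own proof of Lemma~\ref{Lem03} is correspondingly just a few lines.
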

\begin{proof} Given an element   $\mathring{w}\in \mathring{Br}_{d,n}$, it is  evident that, in each bracket, all  beads  can be moved near the  left  endpoint,  and in each line, all  beads can be moved   near  the upper point. Then, the diagram  results the concatenation of   three  diagrams:   the  first one at top  and  the last one at  bottom containing only beads, and a  central  diagram containing arcs  without  beads.  This last  central diagram is put  in the  normal  form $N(w)$.
\end{proof}

\begin{remark}\rm Observe that every  $x\in J_n$  has no crossings.  However, it lives also in $Br_n$, and   its normal  form in $Br_n$  may have crossings. The  same remains true for  $x\in \mathring{J}_{d,n}$, see  Figure \ref{Fig09}.
\end{remark}

\begin{example}\label{nf} To  write  the  normal  form  for  the  element  in Figure  \ref{Fig09},  we  define $A=\{1,2\}$,  $B=\{3,4\}$, $C=\{5\}$,    $A'=\{2,4\}$,  $B':=\{3,5\}$, $C'=\{1\}$. Moreover, $\epsilon_1=0,\epsilon_2=1, \epsilon_5=2, \epsilon'_4=1$.  The  normal  form is  $$\OO_2 \OO_5^2 \  s \ \T_1 \T_3 \ s' \  \OO_4 ,$$
where  $s=(^{1,2,3,4,5}_{1,4,2,3,5})=\s_3 \s_2$  and  $s'=(^{1,2,3,4,5}_{2,3,4,5,1})=\s_4 \s_3 \s_2 \s_1$, see Figure \ref{Fig09}.
\begin{figure}[H]
\includegraphics[scale=0.8]{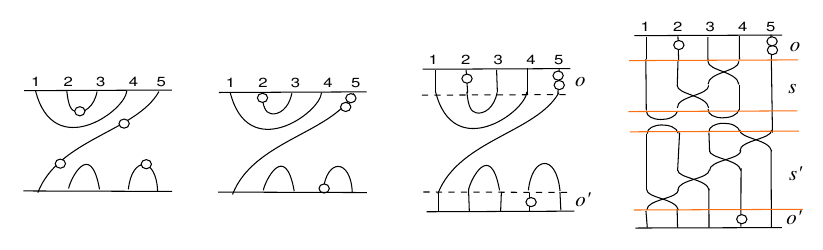}
\caption{The normal  form in $\mathring{Br}_{4,5}$ of the element  in Figure  \ref{Fig05}.}\label{Fig09}
\end{figure}

\end{example}

\begin{note}\rm  In what follows  we  omit for short  the  exponents  $\epsilon_j$ of the $\OO_j$ since  they do  not affect the  proofs.
\end{note}

\begin{lemma}\label{Lem04}
Every  word  in the  generators  $\T_i$, $\s_i$ and  $\OO_i$  can be  put  in the  normal  form  (\ref{nformbrauerabacus}) by using the relations given in Proposition \ref{Brauerabacus}.
\end{lemma}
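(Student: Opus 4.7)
The plan is to follow the strategy of Lemma \ref{Lem02}, exploiting the key simplification that the permutation generators $\s_i$ create no obstruction to moving bead generators: relation (\ref{OS}) lets every $\OO_j$ commute past every $\s_i$ at the sole cost of the index change $j \mapsto \s_i(j)$. Consequently, the only genuine obstacle to bringing $\OO$'s to the extremities of a word is the same as in the Jones case, namely encountering a tangle generator $\T_i$ with $j \in \{i,i+1\}$, and it is handled exactly as in Lemma \ref{Lem02} via (\ref{O2}), (\ref{O3}) and (\ref{O4}), including the trick of transporting pairs $\T_i \OO_i$ or $\T_i \OO_{i+1}$ as a block when individual moves are blocked.

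First I would push each $\OO_j$ in the given word toward the left or right end: (\ref{OS}) crosses any $\s$ (with index update); (\ref{O3}) crosses any $\T_i$ with $j \neq i, i+1$; when $j \in \{i,i+1\}$ the bead is toggled between $\OO_i$ and $\OO_{i+1}$ by (\ref{O2}); and (\ref{O4}) absorbs any $\OO^k$ caught between two copies of the same $\T_i$. Interleaving these moves with the Jones/Brauer absorption $\T_i\T_j\T_i=\T_i$ and the commutation (\ref{J02}) of distant $\T$'s, the original word is brought to the form $o\cdot v\cdot o'$ with $o, o'$ products of bead generators and $v$ a word involving only $\T_i$'s and $\s_i$'s.

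Next, the subword $v$ satisfies only the Brauer relations (\ref{Coxeter-Moore1})--(\ref{Coxeter-Moore2}), (\ref{J01})--(\ref{J02}), (\ref{B01})--(\ref{B03}), so by \cite[Lemma 28]{AiArJuJPAA2023} it can be transformed into the normal form $N(v)=s\,\T_1\T_3\cdots \T_{2k-1}\,s'$ with $s,s'\in S_n$. Finally, I would rearrange $o$ and $o'$ into the canonical arrangement required by (\ref{nformbrauerabacus}): each bead is transported through $N(v)$ by using (\ref{OS}) to pass the $s$ and $s'$ factors (updating the index by the permutation), (\ref{O2}) to swing a bead between the two endpoints of any up-bracket $\T_{2i-1}$, and (\ref{O1}) to reorder the surviving product and reduce its exponents modulo $d$. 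In this way every $\OO_j$ in $o$ is brought onto either a left endpoint of an up-bracket or an upper endpoint of a line of $N(v)$, and every $\OO_j$ in $o'$ onto a left endpoint of a down-bracket.

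The main obstacle lies in this final bookkeeping step: one must verify, by a case analysis according to whether the strand carrying a given $\OO_j$ terminates on an up-bracket, a down-bracket, or a line of $N(v)$, that the diagrammatic bead-slide to its canonical position can always be realized by a concrete sequence of (\ref{O1}), (\ref{O2}), (\ref{O3}), (\ref{O4}) and (\ref{OS}). Once that transport is established, the uniqueness part of Lemma \ref{Lem03} forces the resulting exponents $\epsilon_j, \epsilon'_j$ to coincide with the correct bead counts modulo $d$, completing the reduction.
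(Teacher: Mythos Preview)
Your plan---extract all bead generators to the ends to reach $o\cdot v\cdot o'$, then normalize the bead-free Brauer word $v$, then reposition the beads---differs in organization from the paper's argument, which instead replays the Brauer normal-form reduction of the underlying word $z$ step by step on the beaded word $\mathring z$. The paper's key technical device is an auxiliary lemma (Lemma~\ref{relationsBr}): for every single Brauer relation $w=w'$, any insertion of non-commuting $\OO_j$'s into $w$ can first be pushed to the sides, $\mathring w = o_1\,w\,o_2$, after which the relation applies to give $o_1\,w'\,o_2$. This is what allows the full Brauer rewriting, including the mixed relations (\ref{B01})--(\ref{B04}), to be carried out with beads present.

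Your first step has a genuine gap. You assert that the obstruction to extracting a bead is ``the same as in the Jones case'' and is handled by (\ref{O1})--(\ref{O4}), (\ref{OS}), the Jones absorption $\T_i\T_j\T_i=\T_i$, and distant-$\T$ commutation. That fails in the Brauer setting because closed loops in the concatenation diagram may contain crossings. Take $\T_1\,\OO_1\,\s_1\,\T_1$: the down-bracket of the first $\T_1$, the crossing $\s_1$, and the up-bracket of the second $\T_1$ close up into a loop carrying the bead. With only your listed moves the bead can be slid around this loop via (\ref{O2}) and (\ref{OS}) but never escapes, and (\ref{O4}) does not apply because $\s_1$ sits between the two copies of $\T_1$; no Jones relation removes it either. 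One needs $\s_1\T_1=\T_1$ from (\ref{B01}) before (\ref{O4}) can fire. Once you allow the mixed relations in the extraction phase---and justify that they can be applied with beads inside their window, which is precisely Lemma~\ref{relationsBr}---your first two steps fuse into the paper's interleaved reduction. So the ``main obstacle'' is not the final bookkeeping you flag in step~3, but rather showing that the Brauer rewriting itself goes through in the presence of beads.
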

\begin{proof}   Now,  suppose that  $\mathring{z}$ is a  word  in the  generators  $\s_i, \T_i$ and $\OO_i$, and  let $z$ be the  word obtained  from $\mathring{z}$ by canceling all  $\OO_i$.  We know  (\cite[Theorem 3.1]{KuMaCEJM2006})  that  every  word in the generators $\T_i$  and  $\s_i$ can be  put in the normal  form  (\ref{nformbrauer}) by  using the  relations  defining $Br_n$, so  $z$  can be put in the normal form, that we  denote   $N(z)$.   Observe that  the  transformation of  $z$ into $N(z)$ requires a  chain of, say, $m$ of local  transformations, each one  using  a  relation among  (\ref{J01})-(\ref{J02}) and (\ref{B01})--(\ref{B03}), here  denoted  $R_k$.  Observe that any  relation $R_k$  is of the form  $v=v'$,  where $v$ and  $v'$ are  two  words defining the  same   element;       we  will  write indifferently  $v=\rho_k(v')$ or $v'=\rho_k(v)$.  Let  $z_0=z$,  $z_i$ the words at intermediate steps and     $z_m=N(z)$. At each  step,  $z_i$ can be  written as  product of  subwords $x_i v_i  y_i$,  where  $x_i$ and  $v_i$ may  be  1,  and  $z_{i+1}=  x_i  \rho_k(v_i) y_i$.   Now  we  come  back to  $\mathring{z}$, we  write   $\mathring{z}_0= \mathring{z}$   and we  subdivide  it  into  subwords  $\mathring{x}_0  \mathring{v}_0 \mathring{y}_0$, in such a way  that $\mathring{v}_0$ has  no  generators  $\OO_j$  at  the  beginning  and  at  the  end, i.e.,  these  generators  are put in $\mathring{x}_0$  and  $\mathring{y}_0$ respectively.   So,  the  generators  $\OO_j$ may  be inside the word  $\mathring{v}$.  In the  case  they  do not occur, i.e., $\mathring{v}_0=v_0$,  we can  proceed with   $\mathring{z}_1= \mathring{x}_0 \rho_k(\mathring{v}_0) \mathring{y}_0$  and  so on. The problem  arises  when  $\mathring{v}_i$  has  generators  $\OO_j$ inside.
We  need  to prove the  following result.
\begin{lemma}\label{relationsBr} Let  $w=w'$  be   the image via $\phi$ (cf. \eqref{phi})  of a  relation  $R_k$ of $Br_n$.  Let  $\mathring{w}$  be obtained   by inserting  products of generators  $\OO_j$,  all non commuting  with  $w$,  between  every pair of  adjacent   generators  in the  word  $w$.  Then,  in $\mathring{Br}_n$,   we have $\mathring{w}= o_1 w o_2$, with  $o_1$ and $o_2$ products of $\OO_j$, so that  relation $R_k$  can be applied  to $\mathring{w}$,  giving  $\mathring{w} = o_1 w' o_2$.
\end{lemma}
\begin{proof}  The proof  consists in verifying all  relations. We  give  here two examples:

1) For   relation (\ref{B03}), with $i=1, j=2$,  we  write
$$\T_1 \OO_1 \OO_2 \s_2 \OO_1 \OO_2 \T_1 \stackrel{(\ref{O2})}{=}  \T_1 \OO_2^2  \s_2 \OO_2^2 \T_1 \stackrel{(\ref{OS})}{=} \T_1 \OO_3^2 \s_2 \OO_3^2 \T_1 \stackrel{(\ref{O3})}{=}  \OO_3^2 \T_1 \s_2 \T_1 \OO_3^2\stackrel{(\ref{B04})}{=} \OO_3^2 \T_1 \OO_3^2. $$

2) For relation (\ref{B04}), with $i=1, j=2$,  we    write
$$  \T_1 \OO_1 \OO_2 \OO_3 \T_2  \stackrel{(\ref{O2})}{=}  \T_1  \OO_1^2 \OO_3 \T_2\stackrel{(\ref{O3})}{=}   \OO_3 \T_1 \T_2 \OO_1^2 \stackrel{( \ref{B03})}{=}  \OO_3 \T_1 \s_2 \s_1\OO_1^2 \stackrel{(\ref{B04})}{=} \OO_3 \s_2\s_1\T_2 \OO_1^2. $$
\end{proof}
By using  the  lemma  above  for the  subword  $\mathring{v}_i$    if it contains  generators  $\OO_j$, we   finally get  a  word   $\mathring{N}(z)$ which coincides  with $N(z)$ if all  generators  $\OO_j$  are removed. If  $N(z)$ has no generators  $\T_i$, then  the  $\OO_j$  can be  moved  at left of  the  word by using (\ref{OS}) and  the  normal form (\ref{nformbrauerabacus}) is  attained.  Observe that  we  use  the  word  {\it move}  having in mind the  bead that slide  along the  arc,  but in  fact by  overpassing a   crossing $\s_i$, $\OO_j$ is transformed into $\OO_{\s_i(j)}$ by (\ref{OS}).   If  $N(z)$  contains  $\T_i$,  then  write  $N(z)=s T s'$ where  $T=\T_1 \T_3\dots \T_{2k-1}$ (see  \eqref{nformbrauer}). Now,  we want  to
 move the   $\OO_j$   in such a  way  that  $\mathring{N}(z)$ is  transformed into  a  word  of  type   $ s O T O' s'$.   More precisely,  we  do the following:  in  $\mathring{N}(z)$  we  denote $\mathring{T}$  the subword that  coincides with $T$  by  removing  the generators  $\OO_j$.  Now, let  $\OO_j$ appear on the  left of  $\mathring{T}$, but also on the left of  generators  $\s_i$  of $s$.  We  can move it  at right till it becomes $\OO_r$ close to  $\mathring{T}$. If  $r=2i\le 2k$, then we  replace $\OO_r$ by $\OO_{r-1}$ using (\ref{O2}).  We repeat  this procedure for  every  $\OO_j$ at left of $\mathring{T}$.  Similarly, let  $\OO_j$ on the  right of  $\mathring{T}$, but also on the   right  of  generators  $\s_i$  of $s'$.  We  can move it  at left till it becomes $\OO_q$ close to  $\mathring{T}$. In this  case, if $q=2i\le 2k$, then we  replace $\OO_q$ by $\OO_{q-1}$ using (\ref{O2}),  while    if  $q>2k$,  we  put $\OO_q$  at left of  $\mathring{T}$  by (\ref{O3}). We repeat  this procedure  for  every  $\OO_j$ at right of $T $. It remains to consider  the case    when $\OO_j$ is  inside $\mathring{T}$. If $j>2k$, $\OO_j$ is  moved  at left of $\mathring{T}$  by using (\ref{O3}).     If  $j= 2i-1$, or $j=2i$,  and $\OO_j$ is at right (resp. at left)  of $\T_{2i-1}$, then  we put $\OO_{2i-1}$ at  right (resp. at left) of  $\mathring{T}$,  by (\ref{O2})  and (\ref{O3}).
  In this  way we   obtain  a  word of type  $s  O   T  O'  s'$. Observe that if $O$ contains    $\OO_j$, then either $j>2k $   or  $j$ is odd, while if $O'$  contains   $\OO_j$ then  $j<2k$  and  odd.
  Therefore,     by  applying $s^{-1}$ to all indices  $j$  in  $O$  and  $s'$ to all  indices $j$ in $O'$,  we get   respectively $o$ and $o'$,  so the  normal  form  (\ref{nformbrauerabacus})   is  obtained.
\end{proof}

\subsection{Two Framed Rook Monoids}

Recall from Subsection \ref{RM} the rook monoid $R_{n}$ and  its presentations.   Also recall that the rook monoid admits a geometric  realization \cite[Subsection 2.2]{AiArJuMMJ}: an  element  of  the  monoid  $R_n$  can be  represented  by  a  diagram  with  $k\le n$  lines  all having the upper points  on the  top segment  and  the  lower point on the bottom segment.   The $2n-k$ points  of the  top  and of the bottom segments that  are not  endpoints of  lines  are  endpoints of  arcs having the other  endpoint  moving  freely  between the  top  and  bottom  segments. These  arcs,  called  {\it broken arcs}, may of course shrink to their non-free endpoint  on the top or  bottom  segment.

Before  introducing  a presentation  for  the two framed Rook monoids,  let's recall  the  diagrams  of  the  generator   $r_i$  and  of  the  generator   $p_i$, see  Figure  \ref{Fig13}.  Observe that  relation (\ref{RHO2}) is  evident.

\begin{figure}[H]
\includegraphics {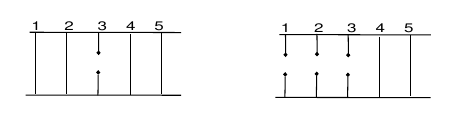}
\caption{  The  generators  $r_3$  and  $p_3$  of  $R_5.$ }\label{Fig13}
\end{figure}

  In  \cite[Remark 3]{AiArJuMMJ},  the  monoid $R_n$    has been provided  with a normal form, namely
\begin{equation} r_{i_1} r_{i_2} \cdots r_{i_{n-k}}  s,  \label{nformrook}\end{equation}
where   $0\le k\le n $  is  the  number of lines, $i_1,\dots, i_{n-k}$  are  the points  on the top  segments  that  are  not endpoints of lines  and   $s\in  S_n$  sends the upper points of  the  lines to their lower points  and  the remaining  $n-k$ ordered points  on the  top  segment  to  the $n-k$ ordered points of the bottom  segment, e.g.  see  Figure  \ref{Fig10}.
\begin{figure}[H]
\includegraphics[scale=0.9]{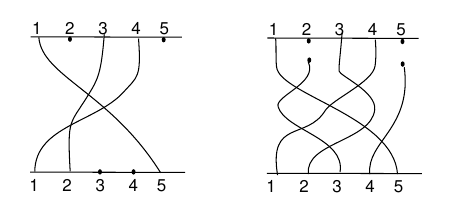}
\caption{An element of $R_5$  and its normal  form.}\label{Fig10}
\end{figure}

\begin{remark} Observe that for  the  rook monoid the elements   are
either point  or lines. Now, it is  natural to  attach  a  framing to
a  line  as in the  symmetric  group, while it is less natural  to
attach  a framing to  a  point.  In literature indeed (e.g.
\cite{ChEaJA2023, AiArJuMMJ})  the framed rook  monoid $R_{d,n}$ has
no framings  attached to  points.
Furthermore, we consider here the case where the points can be
provided with framing, thus defining the framed rook monoid
$R'_{d,n}$. Observe that  $R_{d,n} \subset \RR_{d,n}$.
\end{remark}
\subsubsection{The    framed rook monoid  $ R_{d,n}$ }\label{Rdn}
\mbox{}
\begin{definition}
The   framed rook monoid $R_{d,n}$  is defined by generators $s_1, \ldots ,s_{n-1}$, $p_1, \ldots , p_1$, $z_1, \ldots , z_n$ satisfying the defining relations of $R_n$,  relations (\ref{Mrook1})-(\ref{Mrook4}), (\ref{sizi}), and the following relations:
\begin{align}
p_iz_j & = z_jp_i, \qquad\text{for $1\leq i< j\leq n$,}\label{fRdn1}\\
p_i z_j & = z_j p_i = p_i,\qquad\text{for $1\leq j\leq i\leq n$.}\label{fRdn2}
\end{align}
\end{definition}
\begin{remark}\label{remarkEast}
The above definition coincides with  the one given    in \cite[Theorem 3.22] {ChEaJA2023}, given in terms of generators $r_i$ with  relations \eqref{M2rook1}-\eqref{M2rook4}, so that  relations (\ref{fRdn1})  and  (\ref{fRdn2}) are  replaced by
\begin{align}
r_iz_j & = z_jr_i, \qquad\text{for  all $ i \not=j $,}\label{fRdn3}\\
r_i z_i &  = z_i r_i= r_i.\label{fRdn4}
\end{align}

\end{remark}

We shall prove that the cardinality of  $R_{d,n}$ is  given by  $\sum_{k=0}^n  { \binom{n} {k}}^2 k!  d^{k}$
(so, for  $n=1,\dots,7, d=2$  the  formula  gives  3, 17, 139, 1473, 19091, 291793, 5129307). 
We  proceed   as for the previous  framed  monoids.

 We denote by  $\mathring{R}_{d,n}$ the {\it first abacus Rook monoid}, which is  formed by the  diagrams with $k$  lines with at most $d-1$  beads  sliding  on each  line,  and  $2n-k$    broken arcs without  beads. Indeed, in  this first  abacus  Rook  monoid  the  free  endpoint  of  a broken  arc   allows the  beads to  escape. Also  here the  product   is defined as usually  by concatenation, and  again by counting  beads modulo $d$  and neglecting  arcs with  two  free endpoints  containing or  not  beads. Note that $\mathring{R}_{1,n} =  R_n$ and $R_n$ is a submonoid of $\mathring{R}_{d,n}$ for all $d,n$.

\begin{example}\rm See  Figure  \ref{Fig12}.
\begin{figure}[H]
\includegraphics[scale=0.8]{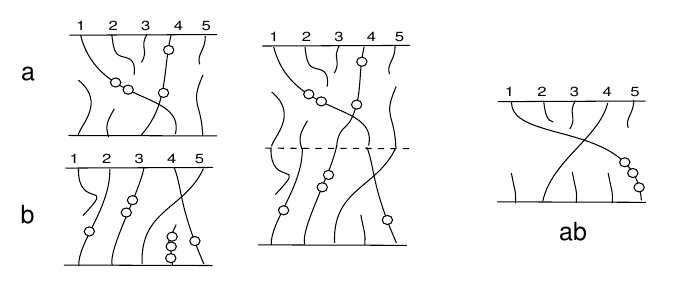}
\caption{  Two elements of $ \mathring{R}_{4,5}$ and their  product. }\label{Fig12}
\end{figure}
\end{example}

The  cardinality of  $\mathring{R}_{d,n} $  is evidently  $\sum_{k=0}^n  { \binom{n} {k}}^2 k!  d^{k}$.
Then,  we  prove the following

 \begin{theorem}\label{Rook1abacus} The monoid   $\mathring{R}_{d,n}$
 is generated by     $\s_1,\dots, \s_{n-1}$,  $r_1,\dots, r_{n}$, $\OO_1,\dots, \OO_n$,  subject to the  relations  (\ref{Coxeter-Moore1}),   (\ref{Coxeter-Moore2}),
(\ref{O1})-(\ref{O4}), (\ref{OS}),   (\ref{Mrook1})-(\ref{Mrook4})  and
 \begin{align}  & r_i \OO_j= \OO_j r_i  \qquad \text{for all $i\not= j$,} \label{OP1} \\
                 & r_i \OO_i= \OO_i r_i=   r_i. \label{OP2}
\end{align}Hence  ${R}_{d,n}$ is isomorphic to  $\mathring{R}_{d,n}$ and its cardinality is $\sum_{k=0}^n  { \binom{n} {k}}^2 k!  d^{k}$. 
 \end{theorem}
 As in the previous cases, we  prove that  every  element of $\mathring{R}_{d,n}$  can be  written  in terms of  the generators
$s_i$ , $r_i$ and  $\OO_i$  in the following  normal  form.
   \begin{equation} \OO_{j_1}^{m_1} \OO_{j_2}^{m_2} \cdots \OO_{j_k}^{m_k} r_{i_1} r_{i_2} \cdots r_{i_{n-k}}  s,  \label{nformfrook}      \end{equation}
where $j_1,\dots ,j_k$  are  the  upper points of  the $k$ lines,  $i_1,\dots, i_{n-k}$  and  $s$  are  defined as in (\ref{nformrook}).

Observe that   $m_1, \dots, m_k$  lie in $[0, d-1]$,  and  $s$ is a  permutation of $S_k$, and hence   $\mathring{R}_{d,n}$ has cardinality $\sum_{k=0}^n  { \binom{n} {k}}^2 k!  d^{k}$.
To conclude, we  verify that   that  every  word  in  such  generators, using the  relations  above,  can be put in  the normal form (\ref{nformfrook}).    We  get  that  $\mathring{R}_{d,n}$ is isomorphic to  $R_{d,n}$, so  Theorem \ref{Rook1abacus}  follows.

\subsubsection{The framed Rook monoid  $  \RR_{d,n}$ }

 \begin{definition}
 The  monoid  $\RR_{d,n}$  is defined by generators $s_1, \ldots , s_{n-1}$, $p_1, \ldots , p_n$, \break $z_1, \ldots , z_n$ satisfying the  same defining relations of $R_n$ except for   relation (\ref{fRdn2}) which is replaced by
\begin{equation}
 p_i z_1^{m_1} z_2^{m_2}\cdots z_i^{m_i}  p_j = p_j z_1^{m_1} z_2^{m_2}\cdots z_i^{m_i} p_i =  p_j, \qquad  1\leq i\le j\leq  n .   \label{Rdn2}
\end{equation}
\end{definition}

\begin{remark}
In  terms of  generators  $r_i$,  eq. (\ref{Rdn2}) becomes
\begin{equation}
r_iz_i^k r_i  = r_i. \label{Rdn3}\\
 \end{equation}
 \end{remark}

We  now introduce the  abacus  monoid   $\mathring{\RR}_{d,n}$, consisting of  diagrams  with $k$  lines with at most $d-1$  beads  sliding  on each  line,  and  $2n-k$    broken arcs with at most $d-1$   beads  sliding on each arc. The  free  endpoint of  a  broken arc  here blocks the  beads, and in the diagram it is marked by a  bar. The  product   is defined as usually  by concatenation, and  again by counting  beads modulo $d$  and neglecting  arcs with  two  free endpoints  containing or  not  beads. Note that  also in this  case  $\mathring{\RR}_{1,n} =  R_n$ and $ R_n$ is a submonoid of $\mathring{R}_{d,n}$ for all $d,n$.
\begin{example}\rm See  Figure  \ref{Fig11}.
\begin{figure}[H]
\includegraphics[scale=0.8]{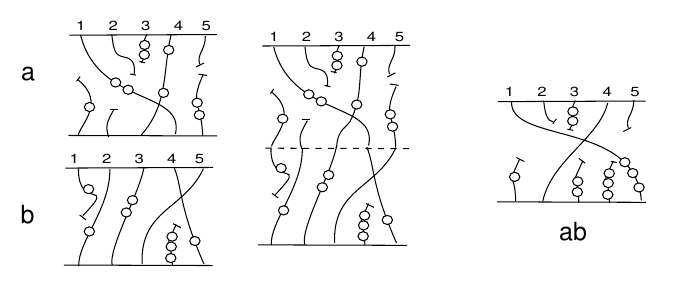}
\caption{  Two elements of $ \mathring{\RR}_{4,5}$ and their  product. }\label{Fig11}
\end{figure}

\end{example}

Since  for  every  $k\in  [0,n]$, there are at most $d-1$ beads on each  of the $k$  lines, as well as  on the $2(n-k)$ broken arcs,  the cardinality  of  $\mathring{\RR}_{d,n}$  is    $\sum_{k=0}^n { \binom{n} {k}}^2 k!  d^{2n-k}$.

 \begin{theorem}\label{Rookabacus} The monoid   $\mathring{\RR}_{d,n}$
 is generated by     $\s_1,\dots, \s_{n-1}$,  $r_1,\dots, r_{n}$, $\OO_1,\dots, \OO_n$,  subject to the  same relations  as  $\mathring{R}_{d,n}$   excluded   (\ref{OP2})  that  is replaced  by
 \begin{equation}
  r_i \OO_i^k  r_i=  r_i \qquad \text{for all  $i, k$.} \label{Or3}
 \end{equation}
 Hence  ${R'}_{d,n}$ is isomorphic to  $\mathring{\RR}_{d,n}$ and its cardinality is $\sum_{k=0}^n { \binom{n} {k}}^2 k!  d^{2n-k}$.
 \end{theorem}
 \begin{proof}
  We  prove that  every  element of $\mathring{\RR}_{d,n}$  can be  written  in terms of  the generators
$s_i$, $r_i$ and  $\OO_i$  in the following  normal  form.
   \begin{equation} \OO_{ 1}^{m_1} \OO_{ 2}^{m_2} \cdots \OO_{n}^{m_n} \ r_{i_1} r_{i_2} \cdots r_{i_{n-k}} \  s
  \  \OO_{j_1}^{q_1} \OO_{j_2}^{q2}\cdots \OO_{j_{n-k}}^{q_{n-k}} ,  \label{nformrookdn}      \end{equation}
where $j_1,\dots j_{n-k}$  are  the  endpoints of  the $n-k$ broken arcs on the bottom segment,  $i_1,\dots, i_{n-k}$  and  $s$  are  defined as in (\ref{nformrook}).

Observe that  the  exponents $m_1, \dots ,m_n$  and   $q_1,\dots, q_{n-k}$  lie  in  $\ldbrack 0, d-1\rdbrack$,  and  $s$ is a  permutation of $S_k$, and hence  the set of  normal forms  has  the  needed cardinality.
Proposition \ref{Rookabacus} thus results  by proving   that  every  word  in  the chosen  generators, using the  relations  above,  can be put in  the normal form (\ref{nformrookdn}),  and  hence  $\mathring{\RR}_{d,n}$  coincides with  $\RR_{d,n}$,  so that  Theorem \ref{Rookabacus} is proved.\end{proof}
\section{Tied  (ramified)  monoids by  deframization}

Tied  monoids were introduced in \cite{AiJuJKTR2016} and generalized in \cite{ArJuSF2021, AiArJuJPAA2023}.
  In particular,  according to  this last reference,  the  monoids  considered here can be regarded as submonoids   of
 the partition monoid, see \cite{KuMaCEJM2006}.  Thus the objects named  ``tied monoid'' in this paper  are the so called ramified monoids, see \cite[Definition 10]{AiArJuMMJ}.
  We denote by $tM$ the tied monoid associated to $M$. More precisely,  for $M= S_n$, $tM$ equals  $tS_n$ as presented in \cite[Proposition 17]{AiArJuJPAA2023}, for $M= IS_n$, $tM$ is as presented in \cite[Theorem2]{AiArJuMMJ}, for $M=Br_{n}$, $tM$ denote the so called ramified of $M$  as   defined in \cite[Definition 20]{AiArJuJPAA2023}.

We recall that the identity diagram of the partition monoid has  $n$  vertical parallel  lines.  The diagram of a generator    of $M_n$ differs  from the identity  in the  sense  that some of  the  parallel lines are  replaced  by  arcs (broken or not)  in different positions. We  say  that  these  arcs form the   {\it core } of the generator.  The core of the  identity is  the whole set of  $n$ lines: see Figure \ref{qwe}.

\begin{figure}\label{qwe}
\includegraphics[scale=1]{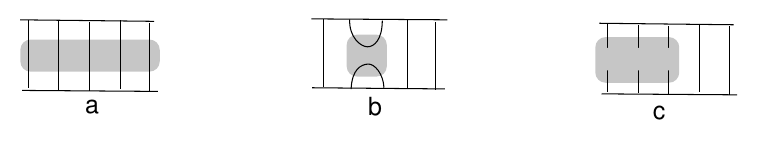}
\caption{ 
The core  of  (a) identity,  (b) of  the  tangle  generator $t_2$ and (c) of  the generator $p_3$,  shown in gray }\label{Fig19}
\end{figure}

 Diagrammatically, an element of $tM$   is  obtained  by  adding {\it  ties}  connecting pairs of arcs   of $M$.  The  ties,  represented in the diagram as  waved lines,   by  definition  define  a partition of  the set  of  generalized  arcs: two generalized  arcs connected  by  a tie belong to the  same  block of the partition.

Now,   the  tied  monoids  studied  in  \cite{ AiJuJKTR2016, AiArJuJPAA2023}, have been  provided with a  presentation:  the tied monoid  $tM$  of $M =\langle X, R\rangle$ has the   set   $X$  of generators  and relations  $R$   plus a  set $H$ of generators   that  behaves  as  ties    due to  some    new relations.
The  diagram of a  generator  in $H$  has  either a tie   connecting two parallel  lines  of  the identity or connecting the  arcs of  the  core of  a generator of $X$.

 Let  $M_{d,n}$ denote the $d$-framization of the monoid $M$, where $M$ is any monoid considered  here. The  aim of this  section is  to  recover $tM$  from    $ \CC[M_{d,n}]$.
More  precisely,  we realize the defining generators of $tM$  as certain sums in $\CC[M_{d,n} ]$, in a way similar as how the bt-algebra was constructed from the Yokonuma-Hecke algebra.

\subsection{A recipe for  deframization}

 Before  explaining   our  recipe for  deframization, we make some  observations  that  explain the  choice  of  some  elements of the monoid  algebras involved in the procedure.

  As  we  already  remarked,  in all  framed  monoids the  commutation  rules of the  framing generators with the other  generators  mean, in term of  diagrams,  that  the  beads slide  freely  along  each  arc.
 But  beads    cannot  jump  from  an  arc  to  another. In    Figure \ref{Fig14}  we  show  some   forbidden jumps between  the  parallel vertical lines and  between  arcs  of the cores of the monoids generators  here  chosen:  a)  $z_i \not= z_j$;  b)  $z_i t_i \not=t_i z_i$;  c) $z_i p_i\not= p_i z_i$.
 \begin{figure}[H]
 \includegraphics[scale=0.8]{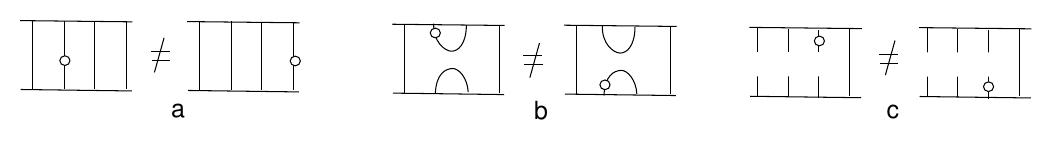}
\caption{  The forbidden jumps of  beads.}\label{Fig14}
\end{figure}
Observe  that  the  forbidden jump  $z_i s_i\not=s_i z_i$ can be written  as  $z_i s_i\not= z_{i+1} s_i$  by relation (\ref{sizi}), therefore  this case  is  included in  case  (a) with  $j=i+1$.

However,  in the monoid algebra, we can introduce    {\it bridge}  elements, i.e.  elements  that  allows  the framings to move  from an  arc to another one.

Let $g$ be a  generator of  $M_n$. We denote by  $X(g)$  the  set of  indices of the  core of  $g$.  For instance, $X(1)=\{1,\dots,n\}$, $X(s_i)=X(t_i)=\{i,i+1\}$,  $X(r_i)=\{i\}$, $X(p_i)=\{1,\dots,i\}$.
\begin{definition} Let $g$ be a generator  of  $M_n$ satisfying  in $M_{d,n}$  $z_i g\not=g z_j$ for some $i,j \in X(g)$.  The {\it bridge}    $\beta_{i,j}(g)$ is an element of  $\CC[M_{d,n}]$  satisfying  \begin{equation}\label{bridge} z_i \beta_{i,j}(g)=\beta_{i,j}(g)z_j
\end{equation}
The pair   $i,j$ of indices, possibly coinciding,  is  ordered.
\end{definition}
Observe that  in general $\beta_{i,j}(g)\not=\beta_{j,i}(g)$.
We  give here  some  examples. \begin{enumerate} 
\item[1)] Let  $g= 1\in \CC[C_{d,n}]$.  If  $i\not=j$ then $ z_i \cdot 1  \not= 1 \cdot z_j$. So, for  every pair $i,j$, $i\not=j$, we define
\begin{equation}\label{eij}
 \bar e_{i,j}:= \frac{1}{d}\sum_{k=0}^{d-1}z_i^k z_{j}^{-k}.
\end{equation}
 It satisfies  eq. (\ref{bridge}), so it is a  bridge. Moreover, it commutes with any $z_k$  and satisfies  $\bar e_{i,j}=\bar e_{j,i}$, therefore it satisfies  (cf. \cite[3.1]{JuLaTA2007})
 \begin{equation}\label{zeij} z_i \bar e_{i,j}=z_j \bar e_{i,j}= \bar e_{i,j}z_j =\bar e_{i,j}z_i.\end{equation}
Since $\CC[C_{d,n}]$ is a  submonoid  of all  framed  monoids   considered here,  $\bar e_{i,j}$ lies in  all  the  corresponding  monoid  algebras.

\item[2)]  In $\CC[J_{d,n}]$  and in $\CC[Br_{d,n}]$, consider $g=t_i$. We have  $z_j t_i  \not= t_i z_j$ only if  $j=i,i+1$.  The corresponding bridge $\beta_{i,i}(t_i)$ is given by:
\begin{equation}\label{barfi} \bar f_i   :=  \frac{1}{d}\sum_{k=0}^{d-1}z_i^k t_i  z_{i}^{-k}. \end{equation}
Indeed, it satisfies eq. (\ref{bridge}) with $i=j$. Moreover,  in virtue of  (\ref{Z2}),
in each term of the sum (\ref{barfi}), the $z_i$ at right  (or at left) can be replaced by $z_{i+1}$, so that   $\bar f_i$ results also as  the bridge $\beta_{i,j}(t_i)$ with $j=i+1$, and  satisfies
\begin{equation}\label{zfi} z_i \bar f_i=z_{i+1} \bar f_i= \bar f_iz_{i}=  \bar f_iz_{i+1}.\end{equation}

Observe also that  $\bar f_i$ commutes  with all framings $z_j$:
\begin{equation}\label{zfi2} z_j\bar f_i=\bar f_iz_{j},\quad \forall j\end{equation}

\item[3)]
In $\CC[R'_n]$, the element  $r_i$  satisfies $z_i r_i \not= r_i z_i$. The element
\begin{equation}\label{qi} \bar q_i  =  \frac{1}{d}\sum_{k=0}^{d-1}z_i^k r_i  z_{i}^{-k}\end{equation}
satisfies
\begin{equation} z_i\bar q_i = \bar q_i z_i.  \label{ziqi} \end{equation}
So, it is a bridge with $i=j$.  
Observe that also $\bar q_i$ commutes with all  framings. 
 
\item[4)]
Also, in $\CC[R'_n]$, the element $p_h$ does not commute with  all  $z_i$ such that $i\le h$. The element
\begin{equation}\label{wijh} \bar w_h^{(i,j)}  =   \frac{1}{d}\sum_{k=0}^{d-1}z_i^k p_h z_{j}^{-k}, \quad  i,j\le h,  \end{equation}
 satisfies eq. (\ref{bridge}):
$  z_i \bar w_h^{(i,j)}  = \bar w_h^{(i,j)} z_j$, 
   and is  therefore a  bridge, non symmetrical by  exchange of $i$ with $j$. If $i\not=j$, it  commutes only with all framings $z_k$ such that  $k>h$.

\end{enumerate}
In terms of  diagrams (see  Figure \ref{Fig15}), we say that
  $\bar e_{i,j}$  allows  $z_i$  to  jump from  the $i$-th  to  $j$-th line  and  viceversa; 
  that  $\bar f_i$ allows $z_i$  and  $z_{i+1}$  to jump  from a  bracket to  the  other of  the  core of $t_i$;
  that  $\bar q_i$ makes possible the jump of   $z_i$  from a broken arc   to the other of the core of the generator $r_i$, and $\bar w_h^{(i,j)}$ 
 allows $z_i$  to jump from the top broken arc at place  $i$ to the bottom one  at place $j$   of the core of  $p_h$, see  Figure \ref{Fig15}c,  where $h=3$, $i=1$ and $j=3$.

 \begin{figure}[H]
 \includegraphics[scale=0.8]{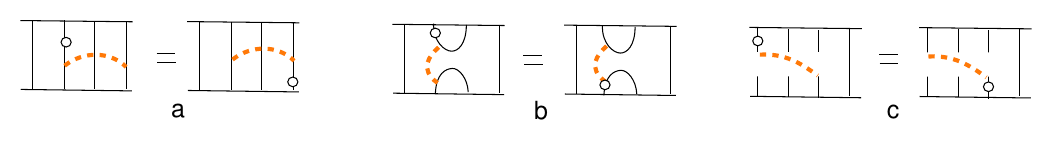}
\caption{  Examples of  bridge elements. }\label{Fig15}
\end{figure}

Still,  we  have sometimes  to  extend  the monoid  algebra, which we shall consider with coefficients in a ring $\mathbb K$.
Let   $M_{d,n}=\langle Y, S\rangle$ be one of the  framed monoids considered   above,  and $A:=\KK[M_{d,n}]$, where  $\KK=\CC$ or  $\KK= \CC[\alpha^{\pm 1}_1,\dots,\alpha^{\pm 1}_{d-1}]$ and  the $\alpha_i$ are indeterminates over $\CC$.

The extension of {scalars} in the monoid algebra is  needed when  in $M_{d,n}$ there  are  relations  meaning that  loops or  segments  with  framings have to be neglected.   In particular:
\begin{enumerate} \item[-]  in $\KK[J_{d,n}]$ and  in  $\KK[Br_{d,n}]$,  relations  (\ref{Z4}) are replaced by
               \begin{equation}\label{KKZ4}  t_i z_i^k t_i = \alpha_k t_i;
               \end{equation}
               \item[-] in $\KK[R'_{d,n}]$   relations  (\ref{Rdn2})   and  (\ref{Rdn3}) are replaced     by
               \begin{equation}\label{KKRdn2} p_i z_1^{m_1}  \cdots z_i^{m_i}  p_j = p_j z_1^{m_1}  \cdots z_i^{m_i} p_i = \alpha_{m_1}\dots \alpha_{m_i} p_j, \qquad  1\leq i\le j\leq  n ,
               \end{equation}
               \begin{equation}\label{KKRdn3}  r_i z_i^k r_i = \alpha_k r_i,
               \end{equation}
\end{enumerate}
respectively  ($\alpha_0=1$). See  Remark \ref{alfak} for  an  example explaining  the  necessity of this  extension.

 \begin{definition}\label{deframizationdef}
Build up 
a $\CC$-algebra $D(A)$  obtained from $A$   by the  following procedure, starting from a (finite) subset $B\subset A$  of bridge elements   and a finite set  $R_B$ of ``bridge relations'' (expressing the commutation relations of the bridges elements within themselves and with the generators from $Y$).
\begin{enumerate}
 \item[(i)]  Define  the set $Y^*= Y\cup B^*$,  where   $B^*$ is  a  set  in bijection with  $B$. Let
 $b\leftrightarrow b^*$ denote this bijection.
\item[(ii)]  Let  $F(Y^*)$ be the free associative algebra generated by $Y^*$.  For $r\in R_B$ let $r^*$ be  the element  of $F(Y^*)$ obtained by replacing $b\in B$ with $b^*$.
\item[(iii)] Define $\tilde A$ as the $\CC$-algebra, obtained by  specializing to 1 the parameters $\alpha_k$ in the quotient  $F(Y^*)/I$, where $I$ is the ideal generated by $S\cup \{r^*\mid r\in R_B\}$.
 \item[(iv)] Define 
 $$D(A)=\tilde A/I_z,$$
 where $I_z$ is the ideal generated by   $z_1-1, \ldots , z_n-1$.
\end{enumerate}
 Moreover,  we say  that a  (tied) monoid $tM$  is  a deframization  of    $M_{d,n}$ if  $D(A)= \CC[tM]$. 
 \end{definition}

\begin{remark}  Observe  that  the deframization  depends on the  choice of the   set $B$,  so it is not unique.
\end{remark}

 \begin{remark}  Recall  that  a  tied  monoid  $tM$ is  a ramified  monoid, in the  sense  that  the  ties of its  generators define  a  ramified partition of  $[2n]$,  see \cite[Definition 10]{AiArJuMMJ} for details.  Define  the  framization   $tM_{d,n}$ of  a  ramified (tied) monoid  $tM$ as  for  a  non ramified one,  with  new  relations expressing the property that   the tied  generators  allow the  framing to  move along each  block of  the partition.  The   algebra  $\tilde A$  turns out  therefore to  coincide with $\CC[tM_{d,n}]$,   for instance see  the  example in the    next Section  4.2.
\end{remark}


The rest of the section  provides  the  deframization of all framed monoids considered above.

\subsection{Tied Symmetric  monoid  as  deframization  of   $ S_{d,n}$}
We will show that the tied symmetric monoid $tS_n$,
 introduced  in  \cite{AiJuJKTR2016} and denoted  $TS_n$  therein, can be  viewed  as a deframization of $S_{d,n}$.
 It  is presented  by   generators $s_1,\ldots,s_{n-1}$,   $e_1,\ldots,e_{n-1}$,   and   relations  (\ref{Coxeter-Moore1})-(\ref{Coxeter-Moore2})  and:
\begin{align}
e_i^2 =e_i,&\quad e_ie_j=e_je_i,\label{TSn1}\\
s_ie_j =e_js_i&\quad\text{if }|i-j|\neq 1,\quad e_is_js_i=s_js_ie_j\quad\text{if }|i-j|=1,\label{TSn2}\\
e_ie_js_i=e_js_ie_j&=s_ie_ie_j\quad\text{if }|i-j|=1.\label{TSn3}
\end{align}

\begin{lemma}
For all  $i\in [1,n-1]$, we consider in $\CC[S_{d,n}]$  the   elements
\begin{equation}\label{ei} \bar e_i:=\bar e_{i,i+1}=\frac{1}{d}\sum_{k=0}^{d-1}z_i^k z_{i+1}^{-k}.    \end{equation}
They satisfy relations \eqref{TSn1}-\eqref{TSn3}  and 
\begin{align}\label{zei1} z_i \bar  e_i&=z_{i+1}\bar  e_i= \bar  e_i z_i= \bar e_i  z_{i+1}\\
z_j \bar e_i&=\bar e_i z_j, \quad |i-j|>1.\label{zei2}\end{align}
\end{lemma}
\begin{proof} Relations \eqref{TSn1}-\eqref{TSn3} are checked  in e.g. \cite{JuJKTR2004}.  Relation \eqref{zei1} is verified by an easy direct calculation; relation \eqref{zei2}  is obvious (indeed it holds for any $i,j$).
\end{proof}
\begin{proposition}\label{D(Sdn)} Declare $B=\{\bar e_i\mid 0<i<n\}$ to be the set of bridge elements and $R_B$, given by  \eqref{TSn1}-\eqref{TSn3}, \eqref{zei1}, \eqref{zei2}, be the set of bridge relations. Then 
$tS_n$  is  a  deframization of    $S_{d,n}$.
\end{proposition}
\begin{proof} Use the notation of Definition \ref{deframizationdef}. Note that the $\CC$-algebra $\widetilde A$  has the following presentation 
\begin{align*}\widetilde A=&\langle s_1,\ldots,s_{n-1}, z_1,\ldots,z_{n},\bar e_1^*, \ldots,\bar e_1^*|\\&\eqref{Coxeter-Moore1}-\eqref{Coxeter-Moore2}, \eqref{TSn1}^*-\eqref{TSn3}^*, \eqref{zei1}^*, \eqref{zei2}^*,\eqref{sizi}, \eqref{PreCdn}\rangle\end{align*}
so that 
\begin{align*}D(A)=&\langle s_1,\ldots,s_{n-1},\bar e_1^*, \ldots,\bar e_1^*|\eqref{Coxeter-Moore1}-\eqref{Coxeter-Moore2}, \eqref{TSn1}^*-\eqref{TSn3}^*\rangle.\end{align*}
It is now clear that the map  $z_i\mapsto z_i, \bar e^*_i\mapsto e_i$ extends to an isomorphism $D(S_{d,n})\cong \CC[tS_n]$.
 \end{proof}

\subsection{Set partition monoid  as deframization of  $C_{d,n}$}
 Recall that  the abacus monoid $C_{d,n}$ can also be regarded as a framization of the trivial group of the partition monoid. Observe  that  in $\CC[C_{d,n}]$ relations  (\ref{zeij})  hold.
 Using these  relations one can  prove  the following
\begin{lemma}\label{relbareij}
  The  elements $\bar{e}_{i,j}$ defined by \eqref{eij} satisfy:
 \begin{align}
 \bar{e}_{i,j}^2 &=\bar{e}_{i,j}\quad\text{for all }i<j,\label{bare1}\\
 \bar{e}_{i,j}\bar{e}_{r,s}&=\bar{e}_{r,s}\bar{e}_{i,j}\quad\text{for all }i<j\text{ and }r<s,\label{bare2}\\
\bar{e}_{i,j}\bar{e}_{i,k}&=\bar{e}_{i,j}\bar{e}_{j,k}=\bar{e}_{i,k}\bar{e}_{j,k}\quad\text{for all }i<j<k.\label{bare3}
\end{align}
\end{lemma}

 We can choose $B=\{\bar{e}_{i,j}\}$ as bridge elements and (\ref{bare1})-(\ref{bare3}),\eqref{zeij}  as bridge relations
The algebra  $\tilde A$,  has   generators  $z_k, \bar e_{i,j}^*$ satisfying  $\eqref{bare1}^*-\eqref{bare3}^*, \eqref{PreCdn}, \eqref{zeij}^*$, and it   coincides with $\CC[P_{d,n}]$, i.e. the  algebra of the  framed  partition monoid, see Definition \ref{Pdn}.

 Its quotient  $D(C_{d,n})$  modulo
the  ideal $I_z$ is   evidently isomorphic to  $\CC[P_n]$,
  that is,  $P_n$ is a deframization of $C_{d,n}$.

\subsection{Tied Rook monoids  by deframization}

\subsubsection{Deframization of $\RR_{d,n}$}
The  tied  Rook monoid $t\RR_n$
 is presented by generators $s_1,\ldots,s_{n-1}$,  $e_1,\ldots,e_{n-1}$, $r_1,\ldots,r_n$, $q_1,\ldots,q_n$ satisfying {\normalfont(\ref{TSn1})-(\ref{TSn3})} and  the following relations (see \cite[Theorem 3]{AiArJuJPAA2023}):
\begin{align}
 r_i^2=r_i,&  \quad
 r_ir_j=r_jr_i,\label{r1}\\
 s_ir_j&=r_{s_i(j)}s_i,\label{r2}\\
 r_is_ir_i&=r_ir_{i+1},\label{r3}\\
q_i^2&=q_i,\label{ris1bis}\\
q_iq_j&=q_jq_i\label{ris1}\\
q_ie_j&=e_jq_i,\label{ris2}\\
s_iq_j&=q_{s_i(j)}s_i,\label{ris3}\\
e_ir_je_i=e_iq_j,\quad\text{if }j=i,i+1&,\quad e_ir_j=r_je_i,\quad\text{if }j\neq i,i+1,\label{ris4}\\
r_iq_j=q_jr_i\quad\text{for all }i,j&,\quad q_ir_i=r_i,\label{ris5}\\
r_je_ir_j&=r_j,\quad\text{if }j=i,i+1,\label{ris6}\\
r_ie_ir_{i+1}&=s_iq_ir_{i+1}.\label{ris7}
\end{align}

\begin{remark}  The  cardinality   of  $t\RR_n$  is  given by
  $\sum_{k=0}^n  { \binom{n} {k}}^2 k!  Bell(2n - k) .$  These numbers are  $3, 39, 971, 38140, 2126890, 157874467, 14928602309$  for  $n=1,\dots , 7$,  respectively.
\end{remark}
We recall  that the diagram of the generator $q_i$   is the  tied  version  of  that of  $r_i$, see  Figure \ref{Fig17} b. Therefore,   in  $R'_{d,n}$  it is  natural to  take  the  bridge elements  $\bar e_i$ as in (\ref{ei}) and $\bar  q_i$  as in (\ref{qi}).

 \begin{remark}
Note that,  if  we use  the  presentation of  $R'_{d,n}$  with  the $p_i$ as generators, it is  natural to take the  bridges   elements $\bar w_i$  defined  from (\ref{wijh})   as:
\begin{equation}\label{wiii}  \bar w_i:= \bar w_i^{i,i}.
\end{equation}
The diagram of $\bar w_i$   is  shown in   Fig.  \ref{Fig17}a.   In this  case  we  get  by  deframization   the  same  tied rook monoid $t\RR_n$, presented  by  generators $s_i$ ,  $p_i$, $e_i$  and $w_i $.  The  original presentation  is  recovered  by using  eq. (\ref{RHO2})  together with
\begin{equation}  \bar w_i = r_1 r_2 \cdots  r_{i-1} \bar q_i .  \qquad  1<i\le n . \label{wi}
                \end{equation}
                \end{remark}

 \begin{figure}[H]
 \includegraphics[scale=0.8]{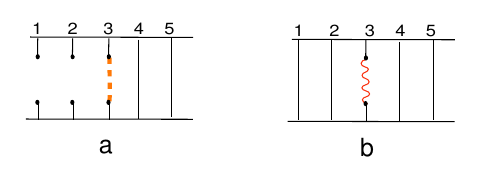}
\caption{a)  the  bridge $\bar w_3$ in  $R_{d,5}$; b) the  diagram of  $q_3$ in  $tR'_5$.  }\label{Fig17}
\end{figure}
\begin{lemma} Set $\KK=\CC[\alpha^{\pm 1}_1,\dots,\alpha^{\pm 1}_{d-1} ]$. The bridge elements $\bar e_i, \bar q_i$ verify, in $\KK[R'_{d,n}]$,  relations \eqref{r1}-\eqref{r3}, \eqref{ris1}-\eqref{ris5}, \eqref{ris7}, \eqref{zei1},\eqref{zei2},  and 
\begin{align}
\bar q_i^2&=\bar  q_iZ_i,\label{R1}\\
r_i \bar e_i r_i &= r_i Z_{i+1}, \label{R2}\\
r_{i+1} \bar e_i r_{i+1}&= Z_i  r_{i+1},\label{R3}\\
 z_j \bar q_i &= \bar q_i z_j,\end{align}
where 
\begin{equation}\label{Zi}  Z_i:= \frac{1}{d}\sum_{k=0}^{d-1} \alpha_k z_i^{-k}. \end{equation}
\end{lemma}
\begin{proof} 


Equations (\ref{r1})-(\ref{r3}) are  satisfied in   $R'_n$ and  hence in   $t\RR_n$ and  $R_{d,n}$, since both  $t\RR_n$ and  $R'_{d,n}$ contain $R_n$ as  submonoid.
Relation \eqref{R1} reads:
\begin{align*}\bar q_i^2 & = \frac{1}{d^2}\sum_{k,h=0}^{d-1} z_i^k r_i z_i^{-k} z_i^h r_i z_i^{-h}\stackrel{\eqref{KKRdn3}}{=}
  \frac{1}{d^2}\sum_{k,h=0}^{d-1} z_i^k \alpha_{h-k}  r_i z_i^{-k} z_i^{k-h} \\  &=\left( \frac{1}{d }\sum_{k=0 }^{d-1} z_i^k   r_i z_i^{-k}  \right) \frac{1}{d }\sum_{j=0}^{d-1} \alpha_j z_i^{-j} = \bar q_i Z_i.
\end{align*}
Relation \eqref{ris1}  trivially  from  (\ref{fRdn3}), as well as
eq. (\ref{ris2})  if $i \not= j,j+1$.

Eq. (\ref{ris3}) gives:
$$ s_i \bar q_j  = \frac{1}{d }\sum_{k =0}^{d-1} s_i z_j^k r_j z_j^{-k}  \stackrel{\eqref{M2rook2},\eqref{M2rook3} }{=}
  \frac{1}{d }\sum_{k 0}^{d-1} z_{s_i(j)}^k   r_{s_i(j)} z_{s_i(j)}^{-k} s_i  = \bar q_{s_i(j)} s_i.
$$
The  first  equation in (\ref{ris4})  and  (\ref{ris2}) with $j=i$ reads:
\begin{align*} \bar e_i r_i \bar e_i &=  \frac{1}{d^2 }\sum_{k,h =0}^{d-1} z_i^k  z_{i+1}^{-k} r_i  z_i^h  z_{i+1}^{-h}=\frac{1}{d^2 }\sum_{k,h }^{d-1} z_{i+1}^{-k} z_{i+1}^{-h}   z_i^k r_i z_i^{-k} z_i^{h+k}  =\\
 &=   \frac{1}{d  }\sum_{ h }^{d-1} z_{i+1}^{-k-h} \left( \frac{1}{d  }\sum_{k=0}^{d-1}   z_i^k r_i z_i^{-k} \right)z_i^{h+k}=\left( \frac{1}{d  }\sum_{k=0}^{d-1}   z_i^k r_i z_i^{-k} \right)\frac{1}{d  }\sum_{ h+k=0 }^{d-1} z_{i+1}^{-k-h}z_i^{h+k}= \bar q_i \bar e_i \\
 & \stackrel{\eqref{ziqi}}{=} \left( \frac{1}{d  }\sum_{k+h=0}^{d-1}z_{i+1}^{-k-h}z_i^{h+k}   \right)\frac{1}{d  }\sum_{  k=0 }^{d-1} z_i^k r_i z_i^{-k}= \bar e_i \bar q_i.
 \end{align*}
 The case  $j=i+1$ is  analogous.
The  first  equation in (\ref{ris5}), for  $i\not=j$, follows  from  (\ref{fRdn3}) and (\ref{r1}),  while  the second one follows   from (\ref{fRdn4}).
Let's check \eqref{R2}, \eqref{R3}:
$$r_i  \bar e_i r_i =  r_i  \frac{1}{d  }\sum_{k=0 }^{d-1} z_i^k  z_{i+1}^{-k}  r_i= \frac{1}{d  }\sum_{k=0 }^{d-1} r_i  z_i^k r_i z_{i+1}^{-k}  \stackrel{\eqref{KKRdn3}}{=}\frac{1}{d  } r_i \sum_{k=0 }^{d-1} \alpha_k  z_{i+1}^{-k}= r_i Z_{i+1}.   $$

 $$r_{i+1}  \bar e_i r_{i+1} =  r_{i+1} \frac{1}{d  }\sum_{k=0 }^{d-1}   z_i^k  z_{i+1}^{-k}  r_{i+1}= \frac{1}{d  }\sum_{k=0 }^{d-1}  z_i^k r_{i+1} z_{i+1}^{-k} r_{i+1}  \stackrel{\eqref{KKRdn3}}{=}\frac{1}{d  }\sum_{k=0 }^{d-1}    z_i ^{ k}\alpha_k r_{i+1} =  Z_i  r_{i+1}.  $$
Let's finally check (\ref{ris7}):
 \begin{align*}r_i \bar e_i r_{i+1}  &=  r_i \frac{1}{d }\sum_{k =0}^{d-1} z_i^k  z_{i+1}^{-k} r_{i+1}  = \frac{1}{d}\sum_{k=0}^{d-1}  z_{i+1}^{-k} r_i  z_i^k r_{i+1}    =\frac{1}{d}\sum_{k=0}^{d-1}  z_{i+1}^{-k} s_i s_i r_i  z_i^k r_{i+1 }    =\\
 &=  \frac{s_i}{d}\sum_{k=0}^{d-1}  z_{i}^{-k}   s_i r_i  z_i^k r_{i+1 }  =  \frac{s_i}{d}\sum_{k=0}^{d-1}  z_{i}^{-k}   s_i (r_i   r_{i+1 } ) z_i^k   \stackrel{\eqref{Rdn3}}{=} \frac{s_i}{d}\sum_{k=0}^{d-1}  z_{i}^{-k}   s_i (r_i  s_i r_{i } ) z_i^k =\\
 &=\frac{s_i}{d}\sum_{k=0}^{d-1}  z_{i}^{-k} r_{i+1}  s_i s_i r_{i }  z_i^k=\frac{s_i}{d}\sum_{k=0}^{d-1}  z_{i}^{-k} r_{i+1}   r_{i }  z_i^k=\frac{s_i}{d}\sum_{k=0}^{d-1} \big( z_{i}^{-k} r_{i }  z_i^k \big) r_{i+1} =s_i \bar q_i  r_{i+1}.
\end{align*}
 The remaining relations are easily verified.
\end{proof}
Arguing as in the previous subsections, and noting that $Z_i=1$ in $D(\KK[R'_{d,n}])$, we see that $\CC[t\RR_n ]$ is a deframization of $R'_{d,n}$.
\begin{remark}\label{alfak} Consider  the  first relation we have verified in the  above proof.    If  we do not  take  the  extension $\KK$, i.e. relation (\ref{Rdn3}) is not  replaced by  (\ref{KKRdn3}),    we  get  $\bar q_i^2   = \bar q_i Z_i= Z_i \bar q_i = Z_i r_i Z_i$,  where   $Z_i= \frac{1}{d}\sum_{k=0}^{d-1} z_i^k$. By applying  the  procedure  we  get $q_i^2= q_i=r_i$  which are not relations   of $tR'_n$.
\end{remark}

\subsubsection{Deframization of $R_{d,n}$}  Next  we  get  another  version  of the  tied rook  monoid, that  we denote  $tR_n$.
\begin{definition}  The tied monoid  $tR_n$  is  defined  by  generators  $s_i, p_i, e_i$  subject to  the  defining relations of  $R_n$  and  to  the  relations:
   \begin{align}    e_i p_j  &= p_j  e_i= p_j,  \qquad  i\le j ,\label{tRn1}  \\
                 e_i p_j  &= p_j   e_i  \qquad  i> j .\label{tRn2}
\end{align}
\end{definition}
Firstly,  we observe that  in $\KK[R_{d,n}]$  the  bridge  elements $\bar q_i$  and  $\bar w_i$  do  not exist; namely, by (\ref{fRdn4})  they satisfy
$$    \bar q_i =  \bar w_i = p_i.$$

So,  let the set $B$  contain  only the  bridges $\bar{e_i}$   (\ref{ei}),  hence we may take  $\KK=\CC$. Moreover,    the 
bridge relations   are  (\ref{TSn1})-(\ref{TSn3}),  \eqref{zei1}, \eqref{zei2}  and
\begin{align}  \bar e_i p_j  &= p_j \bar e_i= p_j,  \qquad  i<j ,\label{DRn1}  \\
               \bar e_i p_i &= Z_{i+1} p_i , \qquad   p_i \bar e_i  = p_i Z_{i+1} ,\label{DRn2}\\
               \bar e_i p_j  &= p_j \bar e_i  \qquad  i>j .\label{DRn3}
\end{align}
They  can be   verified using  (\ref{fRdn3})  and (\ref{fRdn4}). Hence  $ tR_n$ is a deframization of $R_{d,n}$.

Using  the  same method  used in proving  \cite[Theorem 2]{AiArJuMMJ}  for  $t\RR_n$, i.e.  the  ramified  inverse  symmetric  monoid,  we can prove the following
\begin{proposition} The monoid  $tR_n$  coincides  with the submonoid of the  ramified  monoid of $ R_n$ \cite[Definition 10]{AiArJuMMJ}  consisting of  double  partitions $(I\preceq J)$  where  $I$  is  an element  of $R_n $,  and     $J$  contains the  same  singletons  as  $I$   and  blocks that  are unions of  lines of  $I$.
\end{proposition}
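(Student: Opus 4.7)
The plan is to define a monoid homomorphism $\varphi\colon tR_n\to N$, where $N$ denotes the submonoid of the ramified monoid of $R_n$ described in the statement, and to show that $\varphi$ is a bijection. On generators I would set $\varphi(s_i)$ and $\varphi(p_i)$ to be the natural lifts to the ramified monoid using the finest admissible ramification: the singletons of $s_i$ (respectively $p_i$) remain singletons, and each line is its own non-singleton block. I would set $\varphi(e_i)=(\mathrm{id}\preceq J_i)$, where $J_i$ is the partition whose unique non-singleton block merges the two lines of the identity sitting at positions $i$ and $i+1$ (viewed on both the top and the bottom segments). By construction the image of every generator already belongs to $N$.

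The next step is to check that the defining relations of $tR_n$ hold under $\varphi$. The rook relations of $R_n$ hold because the assignment $w\mapsto(w\preceq\widehat w)$, with $\widehat w$ the finest admissible ramification of $w$, embeds $R_n$ in the ramified monoid. The symmetric tied relations (\ref{TSn1})--(\ref{TSn3}) hold for the $\varphi(e_i)$'s; they coincide with the ramification-of-$S_n$ relations already verified in \cite{AiArJuJPAA2023}. The genuinely new checks are (\ref{tRn1}) and (\ref{tRn2}): for $i<j$, the tie in $\varphi(e_i)$ merges two line blocks that get absorbed into the broken-arc region when multiplied by $\varphi(p_j)$, collapsing the product to $\varphi(p_j)$; for $i=j$ one of the tied positions is already a broken-arc endpoint of $p_j$, and the same collapse occurs; for $i>j$ the tie acts on line indices disjoint from the broken-arc range of $p_j$, so $\varphi(e_i)$ and $\varphi(p_j)$ commute. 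One must also verify that $N$ is closed under ramified multiplication, which follows from the diagrammatic fact that composition never turns a singleton into a line nor groups a singleton with a line.

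Finally, to show that $\varphi$ is bijective I would follow the pattern of \cite[Theorem~2]{AiArJuMMJ}. For surjectivity, given $(I\preceq J)\in N$, first factor $I$ via the normal form (\ref{nformrook}) as a word in the $s_i$'s and $p_i$'s, then prepend an appropriate product of conjugates of the $e_k$'s to realise the non-singleton blocks of $J$; the hypothesis that the singletons of $J$ coincide with those of $I$ guarantees that no tie needs to touch a broken-arc endpoint, which is precisely why only the generators $s_i, p_i, e_i$ of $tR_n$ (and not the extra $q_i$'s of $t\RR_n$) are required. For injectivity I would produce a normal form for elements of $tR_n$ of the shape ``(word in $e_i$'s realising the non-singleton blocks of $J$) times (normal form of $I$)'', and match it bijectively with a parametrisation of $N$, using the relations of $tR_n$ to reduce any word to that normal form. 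The step I expect to be the main obstacle is precisely the case analysis for (\ref{tRn1}) at the level of ramified diagrams: one must verify that the ramified product conventions indeed cause a tie on two line positions $i,i+1$ to be absorbed by $\varphi(p_j)$ when $i\le j$, without producing any ``phantom'' block joining a singleton to a line. Once this is confirmed, the remaining steps mirror those of the inverse-symmetric case in \cite{AiArJuMMJ}.
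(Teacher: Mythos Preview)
Your proposal is correct and follows essentially the same approach as the paper, which in fact does not give an independent argument but simply states that the result is obtained ``using the same method used in proving \cite[Theorem 2]{AiArJuMMJ}''. Your outline---define the map on generators to the ramified monoid, verify the relations (with the key check being (\ref{tRn1})--(\ref{tRn2})), and then establish bijectivity via the normal-form argument of \cite{AiArJuMMJ}---is exactly that method spelled out.
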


The cardinality  of  $tR_n$  is given by  $\sum_{k=0}^n  { \binom{n} {k}}^2 k!  Bell( k)$.  These numbers,  for  $n=1,\dots,7$   are  2, 9, 76, 1001, 1866, 464737, 14813324.

\subsection{Tied Jones monoid  as  deframization of $J_{d,n}$}
The tied Jones monoid $tJ_n$,  is  defined by  generators $t_1,\ldots , t_{n-1}$, $e_1, \ldots ,e_{n-1}$,
$f_1,\ldots ,f_{n-1}$ satisfying the defining relations of $J_n$, (\ref{TSn1}) and the relations:
\begin{align}
f_i^2&=f_i,\\
f_if_j &= f_j f_i \quad \text{for $|i-j|>1$,}\label{MtTL1}\\
e_i t_i = &t_ie_i =t_i, \quad f_ie_i = f_i,\label{MtTL2} \\
e_if_j = &f_j e_i,\quad t_if_i = f_it_i= t_i, \label{MtTL3}\\
 t_ie_j = &e_jt_i,\quad t_i f_j = f_jt_i, \quad \text{for all $|i-j|>1$,}\label{MtTL4}\\
t_ie_jt_i=& t_i, \quad\text{and}\quad f_i e_j = e_jt_ie_j, \quad \text{for all $|i-j|=1.$\label{MtTL5}}
\end{align}
This monoid was studied in  \cite[Subsection 5.3]{AiArJuMMJ}, where it is called planar  ramified Jones monoid. It was also proven that  its cardinality  equals the Fuss-Catalan number $C_{4,1}(n)= \frac{1}{4n+1}\binom{4n+1}{n}$.
The monoid $tJ_n$ can  be also  obtained  as a specialization of the two-parameter algebra $t\TL_n(x,y)$ introduced below in
Definition \ref{tTLn}.

Set $\KK=\CC[\alpha_1^{\pm 1},\dots,\alpha_{d-1}^{\pm 1}]$.
 In this case we take as bridges the set formed by the  elements $\bar{e}_i$ and $\bar{f}_i$ defined in \eqref{ei}, \eqref{barfi}, respectively. 
The bridge relations are \eqref{MtTL1}-\eqref{MtTL5}, \eqref{zei1}, \eqref{zei2}, \eqref{zfi}, \eqref{zfi2} and 
\begin{equation}f_i^2=f_iZ_i.\label{MtTL1bis}\end{equation}

 Only  (\ref{MtTL1bis}) 
 will be verified, since the others follow  easily. We have
\begin{eqnarray*}
(\bar{f}_i)^2& =&\frac{1}{d^2}\sum_{r,s}z_i^{r} t_i z_{i}^{-r}
z_i^{s} t_i z_{i}^{-s} = \frac{1}{d^2}\sum_{r,s}z_i^{r} t_i
z_i^{s-r} t_i z_{i}^{-s}\stackrel{a=s-r}{=}\frac{1}{d^2}\sum_{r,a}z_i^{r} t_i
z_i^{a} t_i z_{i}^{-r-a} \\
&\stackrel{\eqref{KKZ4}}{=}&\frac{1}{d^2}\sum_{r,a}z_i^{r} \alpha_a  t_i z_{i}^{-r-a} =\frac{1}{d^2}\sum_{r,a}z_i^{r}   t_i z_{i}^{-r} \alpha_a z_i^{-a} =
\bar{f}_iZ_i .
\end{eqnarray*}



\subsection{Tied Brauer monoid as  deframization of  $Br_{d,n}$}\label{tBr}
 The tied  Brauer monoid  $tBr_n$  was introduced  in \cite{AiArJuJPAA2023} as the ramified Brauer monoid, and it was  proven that it has  a presentation with generators $s_1, \ldots , s_{n-1}$, $t_1, \ldots , t_{n-1}$, $ e_1, \ldots , e_{n-1}$, and
$f_1, \ldots , f_{n-1}$ satisfying the defining relations of $Br_{n}$, $tS_n$ and $tJ_n$, together with the following relations.
\begin{align}
& f_is_j = s_j f_i\quad  for\ |i-j|>1,\label{fisj} \\
& f_i s_i =  s_if_i = f_i, \label{fisi}\\
& s_if_js_i = s_jf_is_j  \quad   for\ |i-j|=1.\label{sifjsi}
\end{align}
\begin{remark}
The relation $f_if_jf_i = e_jf_ie_j$ with $|i-j|=1$,   appears in the original definition of $tBr_n$, but  it is a consequence of the other relations, cf. \cite[(i)Lemma 2]{AiJuMathZ2018}, hence it is omitted.\end{remark}

Set $\KK=\CC[\alpha^{\pm 1}_1\dots,\alpha^{\pm 1}_{d-1}]$. We show that $tBr_n$ is a deframization of $Br_{d,n}$. We have bridge elements
$\bar f_i, \bar e_i$ and, by the analysis developed in the previous cases the only non obvious bridge relations to verify are (\ref{fisj})-(\ref{sifjsi}).

Relation (\ref{fisj}) follows directly from (\ref{sizi}) and (\ref{B02}).
For relation (\ref{fisi}), we have:
$$\bar{f}_is_i=\frac{1}{d}\sum_{k=0}^{d-1}z_i^{k} t_i z_{i+1}^{-k} s_i= \frac{1}{d}\sum_{k=0}^{d-1}z_i^{k} t_i s_i z_{i}^{-k} =
\frac{1}{d}\sum_{k=0}^{d-1}z_i^{k} t_i   z_{i}^{-k}=  \bar{f_i}.$$
Analogously, $s_i\bar{f}_i = \bar{f}_i$.

Finally, we deal with relation (\ref{sifjsi}).   We have: \begin{equation}\label{prev} s_i\bar{f}_js_i=\frac{1}{d} \sum_{k=0}^n s_iz_j^kt_jz_j^{-k}s_i, \end{equation} and   each summand on  the  right hand of \eqref{prev} side equals
 $$ z_i^ks_it_js_iz_i^{-k}
\stackrel{\eqref{B04}}{=}  z_i^ks_jt_is_jz_i^{-k}
= s_jz_j^kt_iz_j^{-k}s_j =s_jz_i^kt_iz_i^{-k}s_j,
$$
so that  by taking the sum  we get  $s_j \bar f_i s_j$.

 \begin{remark}  The  cardinality of $tBr_n$ is $(2n-1)!! Bell(n)$. \end{remark}

\section{Framization and Deframization  of  Algebras}

In this  section  we  discuss how  to  extend the definition of  framization and  deframization  to   certain algebras, which  are  strictly  related to the  monoids  we have  considered  before.
We will work in the following framework. Let $A$ be an algebra with a fixed presentation $\langle X,R\rangle$, with both $X$ and $R$ finite.
 Let  $\aaa=(a_1,\ldots a_n)$ be a  finite set  of parameters. We will consider the class $\mathcal F_A$ of $\CC[\aaa^{\pm 1}]$-algebras   $A(\aaa)$ having a presentation $\langle X,R(\aaa)\rangle$, where the relations $R(\aaa)$ depend polinomially on $\aaa$ and are such that $R(1)=R$ (where $R(1)$ means setting $a_i=1$ in each relation). We will write 
 \begin{tikzcd}
{A(\aaa)} \arrow{r}{\aaa\to 1}  & A.\end{tikzcd}


Let  $M_{d,n}$ be the $d$-framization of  $M$, where $M$ denotes any monoid or group considered  here, and $A$ the corresponding monoid algebra. Let $B$ be the chosen  set of bridge elements.
 Recall  that  every  element  $b^*\in B^*$ is  in  bijection  with a  bridge element  of  $\KK[M_{d,n}]$. Since  the image  of a  bridge  element in the deframization   is  either 1  or  a   generator   $x$ of $M$, the map $\gamma: b^*\mapsto x$  is  well  defined.   Observe   that $\gamma$ maps  $b^*$ to 1  when $b^*$ is  a  tie, and  to $x$  when $b^*$ is   the  tied  version of the generator $x$. Therefore $\gamma$ can be  defined  independently from the framization and indeed it maps $tM$  to $M$.

 In the  diagrams  below  the  horizontal  arrows correspond  to the   specializations to 1 of the algebra  parameters.  Moreover,  we assume that   $\aaa  \subseteq \aaa'$  and  $\aaa \subseteq \aaa''$.
 By $\aaa' \to \aaa$  we  mean   that    $\CC[(\aaa')^{\pm 1}]$ is  sent to $\CC[\aaa^{\pm 1}]$.


\begin{equation}\label{d1}
\begin{tikzcd}
{\mathcal A(\aaa)} \arrow{r}{\aaa\to 1}  & \CC[M] \arrow[d, "fram"]\\
     f {\mathcal A(\aaa')}\arrow[u,"\aaa'\to\aaa" near start,  "z_i\to 1" near end] \arrow{d}[swap]{defram}\arrow[r,"\aaa'\to 1 "]
& \CC[M_{d,n}]\arrow[d,"defram"]\\ t{\mathcal A}(\aaa'') \arrow[r,"\aaa''\to 1 "]
    & \CC[tM]
   \end{tikzcd}
   \end{equation}
\begin{equation}\label{d2}
\begin{tikzcd}
  t{\mathcal A}(\aaa'') \arrow[r,"\aaa''\to 1 "] \arrow[d, "\gamma" near start, "\aaa''\to\aaa" near end]
    & \CC[tM] \arrow[d, "\gamma"] \\
  {\mathcal A}(\aaa) \arrow[r,"\aaa\to 1 "]
& \CC[M] \end{tikzcd}
\end{equation}

\begin{definition} \label{framdefram} We say that

 \begin{enumerate}

\item the   algebra  $\mathcal A(\aaa)\in\mathcal F_{\CC[M]}$  admits  a $d$-framization     if there  exists an algebra    $f\mathcal A(\aaa')\in \mathcal F_{\CC[M_{d,n}]}$  that  makes  the upper diagram in \eqref{d1}  commutative.  The algebra $f\mathcal A$  is  said  to be {\sl framed}.

\item  the deframization of  the framed  algebra  $f\mathcal A$  is an algebra    $t\mathcal A(\aaa'')\in\mathcal F_{\CC[tM]}$  that  makes  the  lower diagram  in \eqref{d1} commutative.  The algebra $t\mathcal A$ is  said  to be {\sl tied}.

\item  The algebra  $\mathcal A$ admits  a tied version   if there  exists an algebra    $t\mathcal A(\aaa'')\in\mathcal F_{\CC[tM]}$  that  makes  the  diagram \eqref{d2} commutative.
 \end{enumerate}

 \end{definition}

\begin{remark}\label{deframalgebra}  Note that, if a  framed  algebra  $f \mathcal A$  exists, then  the procedure  of deframization explained in definition  4.1  is well defined  also for  $f \mathcal A$, starting from it instead  of $\CC[M_{d,n}]$. Therefore, an algebra  admitting  framization  admits   a tied  version,  but  not viceversa,  as  we will  see.
\end{remark}

   In the  next  table  we show  the   monoids we have considered  with the    framed  and  tied  version   and the related choice of parameters (see Definition \ref{framdefram}).  The  corresponding algebras do  not always  admit   a  framization (e.g.,  $\BMW_n$  and  $\mathcal R_n$  if the corresponding framed monoid is $\RR_{d,n}$). However,  in the  case of $Br_n$, the  algebra $\BMW_n$  admits  a  tied  version.
\[
\begin{array}{c|c|c||c|c|c||c|c|c}
 M_n &    \A            &  \aaa & M_{d,n}  &   f\A    & \aaa' & tM_n & t\A  & \aaa''     \\  \hline
 S_n &    H_n           &  v    & S_{d,n}   & \mathrm Y_{d,n}   &v     &  tS_n   &  \mathcal E_n    &    v  \\  \hline
 J_n&    \TL_n          &  x   & J_{d,n}  &  \TL_{d,n} & x,y_i  & tJ_n &  t\TL_n  &  x,y     \\  \hline
  R_n  & \mathcal R_n  &  v    &  R_{d,n} & \mathcal R_{d,n} & v  &  tR_n  & t\mathcal R_n  &v \\ \hline
   R_n  & \mathcal R_n  &  v    & \RR_{d,n} & - & -  &  t\RR_n  & - & - \\ \hline
 Br_n &  \BMW_n   &  a,q   & Br_{d,n} & -   & - &  tBr_n  & t\BMW & a,q,x
\end{array}
 \]

   In the    first  example (Section \ref{sectionYH}) we see how   well-known algebras,  the  Y-H-algebra and the  bt-algebra,  can  be  considered  as framization  and  deframization of  the Hecke  algebra.

In the  second example  (Section \ref{framedTL}) we define  the  framization and  deframization of  the Temperley-Lieb  algebra, getting  a  framed  and  tied  version of   $\TL_n$.

 In the   remaining  sections, we give  the  presentation of the  framed/tied  version of  the rook  algebra  and  of the  tied version of the BMW  algebra.

\begin{remark} The fact that the dimension  of  a  framed (or tied) algebra  coincides  with  the  cardinality of the corresponding  framed (tied)  monoid  is  proved only  for the  first  two  examples.
\end{remark}

 \subsection{The Y-H  algebra  and  the bt-algebra  }\label{sectionYH}
 \subsubsection{The Y-H  algebra}
 The Yokonuma-Hecke algebra  $\mathrm{Y}_{d,n}(v)$  has its origin  as the centralizer of the permutation representation  of the  general lineal group over  a finite field with respect to a maximal unipotent subgroup, see \cite{YoCRASP1967}.
 The algebra $\mathrm{Y}_{d,n}(v)$  has a presentation  with    invertible generators
 $g_1,\ldots ,g_{n-1}$, $z_1,\ldots ,z_n$ satisfying the following relations:
\begin{align}
\label{yk1}
 z_i z_j  = & z_j z_i,\quad
  z_i^d=  1,\\
\label{yk2}
z_j g_i  =  & g_i z_{s_i(j)}, \\
\label{yk3}
 g_i g_j  =  g_j g_i\quad \text{for $\vert i - j\vert > 1$}, &\quad\text{and}
\quad  g_i g_j g_i =  g_j g_ig_j\quad \text{for $\vert i - j\vert = 1$},\\
\label{yk4}
g_i^2 = & 1  +  (v-v^{-1}) \bar{e}_ig_i,
\end{align}
where $\bar{e_i}$ is defined by formula \eqref{ei}.

\begin{remark}\label{11}
The presentation of Y-H algebra used here is not the original presentation relating it to knot theory, see \cite{JuJKTR2004}. For more details on the different presentations of the Yokonuma-Hecke algebra we refer the reader to, e.g.  \cite{MaIMRN2017, DaDoART2017}.
\end{remark}
\begin{remark}
Similarly  to the Hecke algebra, the Yokonuma-Hecke algebra can be thought as a deformation of the group algebra  $\mathcal{S}_{d,n}$.
Set $\KK=\CC(v)$.  Remark \ref{RemPre} says that  $\mathrm{Y}_{d,n}(v)$ can be also defined as the quotient of $\KK[\mathcal{F}_{d,n}]$ modulo  the two-sided ideal generated by the following {\it framed quadratic} expressions:
$$
\sigma_i^2 -1-(v-v^{-1})\bar{e}_i\sigma_i, \quad  i\in \ldbrack 1,n-1\rdbrack.
$$
\end{remark}
Since the Hecke algebra is a  flat deformation of $ \CC[S_n]$  and the Yokonuma-Hecke algebra is a  flat deformation of  $\CC[\mathcal{S}_{d,n} ]$  (see Remark \ref{PreCdn}),   according to  definition \ref{framdefram} we say that the Yokonuma-Hecke is a framization of the Hecke algebra.

\subsubsection{The bt-algebra }
The bt-algebra  $\mathcal{E}_n(u)$ was  defined  in \cite{AiJu}. It has been originally defined  through a presentation with {\it braid} generators $g_1,\ldots, g_{n-1}$ and {\it ties}  generators $e_1,\ldots, e_{n-1}$ satisfying
the following relations:
\begin{align}
\label{bt1}
 g_i g_j  =  g_j g_i\quad \text{for $\vert i - j\vert > 1$}, &\quad\text{and}
\quad  g_i g_j g_i =  g_j g_ig_j\quad \text{for $\vert i - j\vert = 1$},\\
\label{bt2}
e_ie_j =  & e_j e_i,  \quad  e_i^2  =   e_i,\\
\label{bt3}
e_i g_j   = & g_j e_i,\quad  \text{if $| i  -  j|\not=1$},\\
\label{bt4}
e_ig_jg_i = g_jg_ie_j \quad \text{and}\quad &
e_ie_jg_i =  e_j g_i e_j  = g_ie_ie_j \quad \text{if $\vert i  -  j\vert =1$},\\
\label{bt5}
g_i^2  = & 1  +  (v-v^{-1})e_ig_i.
\end{align}

 We say that the bt-algebra is a deframization of the Yokonuma-Hecke algebra      since  can be obtained  by  the procedure   in definition \ref{deframizationdef},  as outlined in  Remark \ref{deframalgebra}.  Its construction was done in fact by removing  the framing generators of  the Yokonuma-Hecke algebra and by  considering a  new abstract generator $e_i$  satisfying the  same  relations  as the $\bar{e}_i$   in the Yokonuma-Hecke algebra.
Note that  $\mathcal{E}_n(1)=\CC[S_{d,n}]$  and   the bt-algebra    has indeed  dimension $n!Bell(n)$.

\subsection{Framed  and tied  Temperley-Lieb algebra}\label{framedTL}

\subsubsection{The algebra  $\TL_{d,n}$}

Let  $n$ and $ d$ be two  positive integers.  Let $x,y_1,\ldots , y_{d-1}$ be  parameters and put $\KK = \CC(x,y_1,\ldots , y_{d-1})$ . We denote by $\mathrm{TL}_{d,n}$ the   $\KK$-algebra generated by $t_1, t_2,\ldots, t_{n-1}$, $z_1, \ldots  ,z_n$ satisfying relations  \eqref{J02}, \eqref{PreCdn}, \eqref{Z2}, \eqref{Z3} and the following relation, where  $y_0=y_d=1$:
\begin{align}
  t_iz_i^k t_i  =  & x y_k t_i \quad  k\in   \ZZ/d\mathbb{Z}.\label{AZ4}
\end{align}
\begin{remark}
Observe  that $\mathrm{TL}_{1,n}$ corresponds to the classical Temperley-Lieb algebra  $\mathrm{TL}_n(u)$, with $u=x $. Also note that taking $u=x$  and $y_1=\cdots =y_{d-1}=1$  in the above relations we have an algebra epimorphism  $\mathrm{TL}_{d,n}\to \mathrm{TL}_n(u)$, defined by  $t_i\mapsto t_i$ and $z_i\mapsto 1$.
\end{remark}
The main result of this section is the following
\begin{theorem}\label{Main}
The algebra  $\mathrm{TL}_{d,n}$  is  a  d-framization of $ \mathrm{TL}_n$. Its dimension is  therefore
  $Cat_n d^n$.
\end{theorem}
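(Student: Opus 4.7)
The plan is to verify that $\TL_{d,n}$ satisfies the two conditions of Definition \ref{framdefram}(1) and then to establish that its $\KK$-dimension equals $|J_{d,n}|$. Taking $f\mathcal{A}(\aaa') := \TL_{d,n}$ with $\aaa' = (x, y_1, \ldots, y_{d-1})$ and $\aaa = (x)$, the two specializations of diagram \eqref{d1} are straightforward. Specializing $y_1 = \cdots = y_{d-1} = 1$ and then $z_i = 1$, relation \eqref{AZ4} collapses to $t_i^2 = x\, t_i$, which together with \eqref{AZ2} gives the defining relations of $\TL_n(x)$. Specializing instead $x = y_k = 1$ transforms \eqref{AZ4} into \eqref{Z4}, while \eqref{AZ2} and \eqref{AZ3} already agree with \eqref{J02} and \eqref{Z3}; together with the framed-monoid relations \eqref{PreCdn} and \eqref{Z2} on the $z_i$'s (which are tacitly imposed in $\TL_{d,n}$) this recovers the presentation of $\CC[J_{d,n}]$.

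The heart of the proof is the dimension count $\dim_{\KK}\TL_{d,n} = d^n \cdot Cat_n$. I would take the normal form \eqref{dotnormalform} of Lemma \ref{Lem01} for $\mathring{J}_{d,n} \cong J_{d,n}$ and show that its $d^n \cdot Cat_n$ monomials, with the symbols $\OO_i$ reinterpreted as $z_i$ and $\T_i$ as $t_i$, form a $\KK$-basis of $\TL_{d,n}$. Spanning is obtained by transcribing the algorithm of Lemma \ref{Lem02}: each local move of that proof uses a relation whose analogue holds in $\TL_{d,n}$, the only new feature being that applications of \eqref{Z4} (collapsing $t_i z_i^k t_i$ to $t_i$) are replaced by applications of \eqref{AZ4}, which instead produces $x y_k\, t_i$. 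The algorithm therefore still terminates and outputs a scalar in $\KK$ times a normal-form monomial, so the chosen set spans $\TL_{d,n}$ over $\KK$.

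For linear independence I would use a deformation argument. Let $R = \CC[x, y_1, \ldots, y_{d-1}]$ and let $\TL_{d,n}^R$ be the $R$-algebra defined by the same presentation; the spanning argument works verbatim over $R$, so $\TL_{d,n}^R$ is generated as an $R$-module by the $d^n \cdot Cat_n$ normal-form monomials. The quotient of $\TL_{d,n}^R$ by the maximal ideal $(x - 1, y_1 - 1, \ldots, y_{d-1} - 1)$ is precisely $\CC[J_{d,n}]$ by the first paragraph, and by Lemma \ref{Lem01} and Theorem \ref{theoremJdn} the images of the chosen monomials form a basis of this quotient. Because the fiber dimension at this closed point equals the number of generators, $\TL_{d,n}^R$ is free of rank $d^n \cdot Cat_n$ after localising away from a divisor, and tensoring with the fraction field $\KK$ of $R$ yields $\dim_{\KK}\TL_{d,n} = d^n \cdot Cat_n$, as claimed.

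The main obstacle is the spanning step, specifically the bookkeeping of scalar prefactors $x^{a}\prod_k y_k^{b_k}$ accrued whenever \eqref{AZ4} is applied. Lemma \ref{Lem02} already handles the more delicate combinatorial cases, for instance pulling an $\OO_i$ across a $\T_{i-1}$ and replacing it by $\OO_{i-1}$, and those steps survive unchanged since they do not invoke \eqref{Z4}. What requires additional care is to verify that the multiplicative scalar produced at the end of any reduction sequence depends only on the final normal form and not on the order of moves; this is a routine but careful confluence check for the extra relations \eqref{AZ4}.
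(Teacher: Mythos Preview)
Your verification that $\TL_{d,n}$ fits into the upper square of \eqref{d1} is fine, and the spanning argument (transcribing Lemma \ref{Lem02} and picking up a factor $xy_k$ each time \eqref{AZ4} is used in place of \eqref{Z4}) is correct and is essentially what the paper does in Proposition \ref{generatorstilde}.

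The gap is in the linear-independence step. Your deformation argument uses semicontinuity in the wrong direction. For a finitely generated $R$-module $M$, the function $\mathfrak{p}\mapsto\dim_{k(\mathfrak{p})} M\otimes_R k(\mathfrak{p})$ is \emph{upper} semicontinuous: passing from a closed point to the generic point the fiber dimension can only \emph{decrease}. Hence knowing that the fiber at $(x-1,y_1-1,\ldots,y_{d-1}-1)$ has dimension $N=d^n\cdot Cat_n$ only reproves $\dim_{\KK}\TL_{d,n}\le N$, which you already have from spanning; it gives no lower bound. (Toy counterexample: $R=\CC[x]$, $M=R/(x)$ is generated by one element, its fiber at $(x)$ is one-dimensional, yet $M\otimes_R\mathrm{Frac}(R)=0$.) Nothing in your argument rules out an extra $\KK$-linear relation among normal-form monomials whose coefficients all lie in the maximal ideal and hence become invisible at the special point.

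The paper supplies this missing lower bound by constructing a concrete model. It defines the diagram algebra $\widetilde{\TL}_{d,n}$ as the $\KK$-vector space with basis the abacus diagrams $\mathring{J}_{d,n}$, with product given by concatenation where each discarded loop carrying $k$ beads contributes a scalar factor $xy_k$; associativity is checked directly. Since the diagrammatic generators satisfy the presentation of $\TL_{d,n}$, one gets a surjection $\TL_{d,n}\twoheadrightarrow\widetilde{\TL}_{d,n}$, and as the normal-form monomials span the source and map to the defining basis of the target, this surjection is an isomorphism. Your worry in the last paragraph about confluence of the scalar prefactors is exactly what the associativity proof for $\widetilde{\TL}_{d,n}$ absorbs; but note that for mere spanning no confluence is needed.
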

Its proof  follows  from Proposition \ref{generatorstilde} below and its Corollary.

\subsubsection{The  diagram algebra $\widetilde{\mathrm{TL}}_{d,n}$}
We  denote by    $\widetilde{\mathrm{TL}}_{d,n}$ the  vector space over  $\KK$  with basis  $\widetilde{J}_{d,n}$. Define a product on this  basis  as follows. Given  $a,b\in\widetilde{J}_{d,n}$  with $ab=c$, we define $\widetilde{\mathrm{TL}}_{d,n}$ \begin{equation}\label{productab} ab= x^{n_1+\ldots +n_d} y_1^{n_1}y_2^{n_2}\cdots y_d^{n_d}c,\end{equation} where $n_p$ is the number of loops   with $p$ beads that are neglected  by the product. We extend this product to $\widetilde{\mathrm{TL}}_{d,n}$ by bilinearity.
\begin{proposition} Formula \eqref{productab} endows $\widetilde{\mathrm{TL}}_{d,n}$ with a $\mathbb K$-algebra structure.
\end{proposition}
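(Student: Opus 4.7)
The plan is to verify that \eqref{productab}, once extended bilinearly, defines a unital associative product. Well\mbox{-}definedness on $\widetilde{J}_{d,n}\times \widetilde{J}_{d,n}$ is built in, and bilinearity is automatic. The identity element is the diagram consisting of $n$ vertical lines without beads: concatenation with this diagram never creates a loop, so the scalar factor is $1$ and the underlying diagram is unchanged. The real work is associativity, which I would reduce to a single geometric statement about the three\mbox{-}fold concatenation.

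The strategy is as follows. Given three basis elements $a,b,c\in\widetilde{J}_{d,n}$, stack their diagrams vertically in the standard way to obtain a single three\mbox{-}tier diagram $D(a,b,c)$, in which every endpoint on the boundary between two consecutive tiers gets identified. This stacked picture intrinsically decomposes into two types of connected components: \emph{surviving arcs}, whose endpoints lie on the top or bottom of the three tiers, and \emph{closed loops}, which stay entirely inside the strip. For every connected component one has a well\mbox{-}defined total number of beads, obtained by summing (modulo~$d$) the beads on each constituent arc of $a$, $b$, or $c$ that contributes to it. By Proposition \ref{propJdn} (associativity in $\mathring{J}_{d,n}$), the underlying Jones diagram obtained by discarding the loops and reading off the surviving arcs, together with their total bead counts, coincides with both the underlying diagram of $(ab)c$ and that of $a(bc)$. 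Hence the diagrammatic part is parenthesization\mbox{-}independent.

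It then remains to show that the multiset of closed loops produced in $D(a,b,c)$ (with their bead counts) is distributed between the two concatenation steps in a way that yields the same total scalar factor in either bracketing. Consider the computation of $(ab)c$: a loop is produced in step one iff its arcs lie entirely in the union of the top two tiers, and the remaining loops are produced in step two, as the surviving arcs of $ab$ close up against $c$. For $a(bc)$ the roles of the boundaries are swapped. In both cases, every loop of $D(a,b,c)$ is produced exactly once, and its bead count equals the sum (mod $d$) of the beads on all of its constituent arcs. Consequently, if $n_p$ denotes the number of loops in $D(a,b,c)$ carrying $p$ beads (with the convention $y_0=y_d=1$), then both $(ab)c$ and $a(bc)$ equal
\[
x^{n_1+\cdots+n_d}\, y_1^{n_1}\cdots y_d^{n_d}\, d(a,b,c),
\]
where $d(a,b,c)\in\widetilde{J}_{d,n}$ is the surviving diagram. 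Thus associativity follows, and extending bilinearly gives the claimed $\KK$\mbox{-}algebra structure.

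The main obstacle I anticipate is being fully careful with the bead bookkeeping: one must check that when a loop is formed in the first of the two concatenation steps, the beads transferred along the way contribute the same total (mod $d$) as when the same loop is formed in the second step with a different bracketing. This amounts to observing that during a single concatenation, all beads on the two arcs being glued end up on the same component of the glued diagram, so the bead total of a component is an invariant of the geometric three\mbox{-}tier picture, not of the bracketing. Once this is spelled out, the verification is essentially combinatorial and follows the same pattern used to prove associativity of $\mathring{J}_{d,n}$ in Proposition \ref{propJdn}.
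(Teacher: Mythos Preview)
Your proposal is correct and follows essentially the same route as the paper: both reduce associativity to the intrinsic three\mbox{-}tier concatenation $a\circ b\circ c$, observe that every closed loop (with its bead count) arises exactly once regardless of bracketing, and conclude that the scalar factor and the surviving diagram are parenthesization\mbox{-}independent. The paper phrases this via a classification into $ab$\mbox{-}loops, $bc$\mbox{-}loops, and $abc$\mbox{-}loops, but the content is the same as your argument.
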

\begin{proof} We have only to verify that the product  \eqref{productab} is associative. This
follows  from the  fact that  the  concatenation product is
associative, also if loops  are not  neglected.  Indeed,  given $a,b,c$
in $\widetilde{J}_{d,n}$
consider the concatenation $a\circ  b \circ  c$.   It contains  loops of
different type:
ab-loops, that  are  formed  in the concatenation $a \circ  b$,  bc-loops,
 that  are formed  in the concatenation $b \circ  c,$  and  abc-loops
that  are  formed  only in the  double  concatenation.
Now,  if  we  take  $(a \circ  b) \circ  c$,  the coefficient  firstly
changes  by  the  contribution of  ab-loops, and  afterwards  changes for
the  contribution  of  the bc-loops  and  abc-loops.  If  we take $a
\circ  (b \circ c)$ the coefficient  firstly   change  by  the
contribution  of  the bc-loops,  and  afterwards  by  the contribution of
the  ab-loops  and  abc-loops.  The final  result is  the  same.
\end{proof}

\begin{proposition}\label{generatorstilde}
 $\widetilde{\mathrm{TL}}_{d,n}$   is  the algebra presented   by  the  generators  $\T_i$ and  $\OO_i$ of the  Abacus monoid, subject  to the same relations \eqref{O002}-\eqref{O3}     whereas  \eqref{O002} nd relation \eqref{O4}  are  replaced by
\begin{equation}\label{ykx}  \T_i \OO_i^k \T_i =   x\ y_k  \T_i, \end{equation}
where  $k\in \ZZ/d\ZZ$ and  $y_0=y_d=1$.
\end{proposition}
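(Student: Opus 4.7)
The plan is to compare the abstract algebra $B$ presented by the generators $\T_i, \OO_i$ and the stated relations, with the diagram algebra $\widetilde{\TL}_{d,n}$. There is a natural $\KK$-algebra homomorphism $\varphi \colon B \to \widetilde{\TL}_{d,n}$ sending each generator to the corresponding one-generator diagram; the proposition amounts to showing that $\varphi$ is a bijection.

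First I would verify that the defining relations actually hold in $\widetilde{\TL}_{d,n}$, so that $\varphi$ is well defined. The relations inherited from the abacus Jones monoid $\mathring J_{d,n}$, namely (\ref{O1})--(\ref{O3}) and the relations (\ref{J02}), can be checked by direct diagram inspection: in each of these concatenations no closed loop is created, so formula \eqref{productab} introduces no scalar and the identity reduces to the corresponding monoid identity already established in Proposition \ref{propJdn}. The only nontrivial verification is (\ref{ykx}): concatenating $\T_i \OO_i^k \T_i$ produces the diagram of $\T_i$ together with precisely one closed loop carrying $k$ beads, so by \eqref{productab} the product equals $x\, y_k\, \T_i$, as required.

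Surjectivity of $\varphi$ is automatic, since by Proposition \ref{propJdn} every element of $\widetilde J_{d,n}$ is already a product of the generators $\T_i$ and $\OO_i$. For injectivity I would rerun the normal form argument of Lemmas \ref{Lem01} and \ref{Lem02} inside $B$. The reduction there uses only the relations (\ref{O1})--(\ref{O3}), the second identity of (\ref{J02}), and the replacement $\T_i \OO_i^k \T_i \mapsto \T_i$. In $B$ the latter is replaced by $\T_i \OO_i^k \T_i = x y_k \T_i$, which does exactly the same combinatorial reduction at the cost of a scalar factor $x y_k$. Hence every word in the generators of $B$ can be rewritten as a scalar multiple of a single normal form of shape (\ref{dotnormalform}), and the set of such normal forms is in bijection with $\widetilde J_{d,n}$. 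This gives $\dim_\KK B \leq |\widetilde J_{d,n}| = d^n \cdot Cat_n$, which matches the dimension of $\widetilde{\TL}_{d,n}$ and forces $\varphi$ to be an isomorphism.

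The main obstacle is confluence of the rewriting procedure in $B$: one must check that the scalar $x^{n_1+\cdots+n_d} y_1^{n_1}\cdots y_d^{n_d}$ produced by the chain of reductions is independent of the chosen sequence of applications of (\ref{ykx}). Inside $\widetilde{\TL}_{d,n}$ this is immediate from associativity of the product, so the cleanest route is to deduce confluence in $B$ indirectly: use the rewriting to exhibit a spanning set of $B$ indexed by $\widetilde J_{d,n}$, then push these spanning elements into $\widetilde{\TL}_{d,n}$ via $\varphi$, where their images form a basis; linear independence downstairs implies linear independence upstairs, completing the proof.
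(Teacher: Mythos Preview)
Your argument is correct and follows the same skeleton as the paper's proof: both reduce an arbitrary word in the generators to a scalar multiple of the normal form (\ref{dotnormalform}) by rerunning the procedure of Lemma~\ref{Lem02}, with relation (\ref{ykx}) replacing the monoid relation~(\ref{O4}) and contributing a factor $x\,y_k$ at each loop removal.

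The one point where you and the paper diverge is the treatment of confluence. The paper argues directly that the scalar produced is well defined: it observes that during the reduction a closed loop only disappears at the very last step, via a single application of (\ref{ykx}), after all the beads on that loop have been gathered together; hence each loop contributes exactly one factor $x\,y_k$ with $k$ equal to its total bead count, matching (\ref{productab}) by construction. You instead bypass this analysis with a dimension argument: the rewriting exhibits a spanning set of the abstract algebra $B$ indexed by normal forms, and since $\varphi$ carries these onto the defining basis of $\widetilde{\TL}_{d,n}$, the surjection $\varphi$ is forced to be an isomorphism. Your route is cleaner and avoids having to justify that beads on a loop really do accumulate into a single $\OO_i^k$ before the loop closes; the paper's route is more explicit about why the scalar is what it is. Either way the proof goes through.
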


\begin{proof}  Every  element   $\tilde w$ of  $\widetilde{\mathrm{TL}}_{d,n}$  is a linear combination with coefficients in $\KK$  of  elements $w_j$ of  $\mathring J_{d,n}$. We   prove that  every $w_j$  can be  written,    using the relations of  the  algebra,
  in the normal  form \eqref{dotnormalform}  and  that, in  so doing, its  coefficient changes in agreement  with (\ref{productab}). We will follow  step by step the reduction procedure  shown in  Lemma \ref{Lem02}. The coefficient of this  element  during this  procedure  changes  by $x\,y_k$ every time  we  replace  by $\T_i$ the product  $\T_i \OO_i^k \T_i$  or $\T_i \OO_{i+1}^k \T_i$,  according to  relation (\ref{ykx}).
Observe  that, in terms  of  diagrams, a  product of  generators $w_j$ may contain any  number of  loops,  each one  involving  any  number  of  generators  $\T_i$  and  $z_i$.   During  the reduction,  each  loop  diminishes  its  length in terms of generators   by  substituting  the  expression   $\T_i \T_{i\pm 1} \T_i $  with  $\T_i$.   The  beads  are  collected  together  till  the loop  is  reduced to the minimal  loop
 in the  expression    $\T_i \OO_{i}^k\T_i$ or  $\T_i \OO_{i+1}^k \T_i$.  So,  each  loop  disappears only by (\ref{ykx}) when all  beads  are  collected  together, and  therefore  it  gives  the   contribution to  the  coefficient  according  to (\ref{productab}).
In  figure  \ref{f6} we illustrate this  procedure in a case with $n=6, d=11$.
\begin{figure}[H]
\includegraphics[scale=0.55]{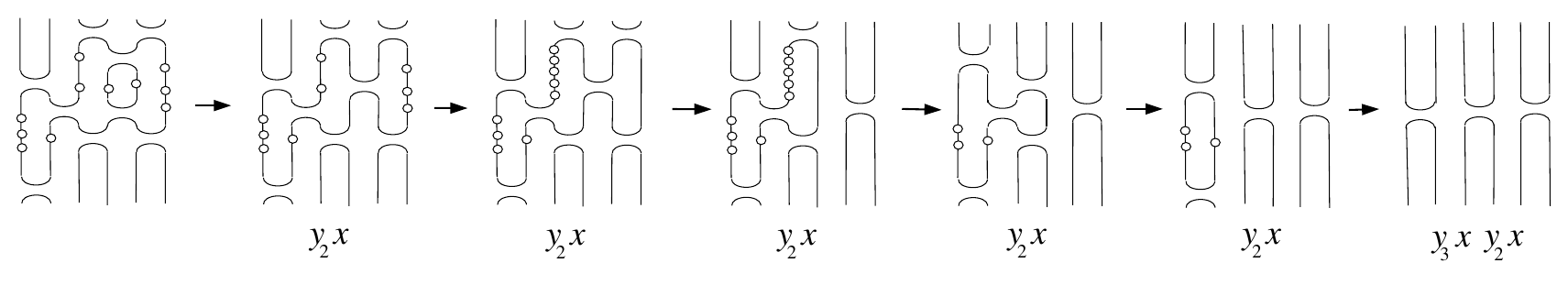}
\caption { Procedure. }\label{f6}
\end{figure}

 Note  that the multiplicative contributions   are non-zero, and, also, that the coefficient remains unchanged   in all  other  relations. This  concludes the proof.
\end{proof}

\begin{corollary}
The map  $z_i \mapsto \OO_i$, $t_i \mapsto  \T_i$ extends to a $\mathbb K$-algebra isomophism $\mathrm{TL}_{d,n}\to\widetilde{\mathrm{TL}}_{d,n}$.
 \end{corollary}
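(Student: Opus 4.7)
My proof proposal is essentially a comparison of presentations, backed by a dimension bound. First, I would verify that the assignment $t_i \mapsto \T_i$, $z_i \mapsto \OO_i$ extends to a $\KK$-algebra homomorphism $\varphi : \mathrm{TL}_{d,n}\to\widetilde{\mathrm{TL}}_{d,n}$. By Proposition \ref{generatorstilde}, the target is the $\KK$-algebra on generators $\T_i,\OO_i$ subject to the relations of the abacus Jones monoid, with the single modification that the ``idempotent/loop-killing'' relations are replaced by $\T_i\OO_i^k\T_i = x y_k\T_i$. Under our identification, the defining relations \eqref{AZ2}, \eqref{AZ3}, \eqref{AZ4} of $\mathrm{TL}_{d,n}$ (together with the framing relations $z_i^d=1$, $z_iz_j=z_jz_i$ inherited from the framization context) correspond exactly to \eqref{J02}, \eqref{O3}, \eqref{ykx}, \eqref{O1}, all of which hold in $\widetilde{\mathrm{TL}}_{d,n}$. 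Hence $\varphi$ is well defined.

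Surjectivity is then immediate: by Proposition \ref{generatorstilde}, the set $\{\T_i,\OO_i\}$ generates $\widetilde{\mathrm{TL}}_{d,n}$, and this set lies in the image of $\varphi$.

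For injectivity, I would establish $\dim_{\KK}\mathrm{TL}_{d,n}\le d^n\,\mathrm{Cat}_n=\dim_\KK\widetilde{\mathrm{TL}}_{d,n}$ by exhibiting a spanning set of the right cardinality. Concretely, I would adapt the reduction procedure of Lemma \ref{Lem02} to the algebraic setting: every monomial in $t_i,z_i$ can be rewritten, using only relations \eqref{AZ2}--\eqref{AZ4}, as a $\KK$-linear combination of the normal forms \eqref{dotnormalform}. Each application of \eqref{AZ4} during this reduction produces a scalar $x y_k$, which precisely matches the loop-counting prescription \eqref{productab} that defines multiplication in $\widetilde{\mathrm{TL}}_{d,n}$. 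Since the normal forms are in bijection with $\mathring{J}_{d,n}$, the dimension bound follows; combined with surjectivity, $\varphi$ is forced to be a bijection.

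The main obstacle is ensuring that the reduction in Lemma \ref{Lem02} genuinely uses only the axioms of $\mathrm{TL}_{d,n}$. In particular, relation \eqref{O2} (a bead sliding across a bracket, $\T_i\OO_i=\T_i\OO_{i+1}$) is a basic move of the abacus-diagrammatic monoid but is not listed explicitly in the presentation \eqref{AZ2}--\eqref{AZ4}; the analogous relation \eqref{Z2} for $J_{d,n}$ is, however, part of the intended framization context, and one would need either to include it implicitly in the definition of $\mathrm{TL}_{d,n}$ or to derive $t_iz_i=t_iz_{i+1}$ as a consequence of \eqref{AZ4} (for instance by comparing $t_iz_it_iz_j$ with $t_iz_{i+1}t_iz_j$ for suitable $j$ and invoking nondegeneracy of the scalars $x,y_k\in\KK$). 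Once this sliding relation is available, the rest of the reduction proceeds exactly as in Lemma \ref{Lem02}, with scalar coefficients tracked along the way as in the proof of Proposition \ref{generatorstilde}, completing the isomorphism.
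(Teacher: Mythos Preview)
Your approach is correct and essentially the same as the paper's: the Corollary is stated without proof because Proposition~\ref{generatorstilde} asserts that $\widetilde{\mathrm{TL}}_{d,n}$ has precisely the presentation defining $\mathrm{TL}_{d,n}$, so the map on generators extends to an isomorphism. Your explicit breakdown into well-definedness, surjectivity, and a dimension bound via the normal forms of Lemma~\ref{Lem02} simply unpacks what ``same presentation'' means; the reduction argument you describe is exactly the content of the proof of Proposition~\ref{generatorstilde}.

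Your flag about relation~\eqref{O2} (and~\eqref{O1}) is well taken. The presentation in Proposition~\ref{generatorstilde} explicitly includes the bead-sliding relations $\T_i\OO_i=\T_i\OO_{i+1}$ and the abacus relations $\OO_i^d=1$, $\OO_i\OO_j=\OO_j\OO_i$, while the stated definition of $\mathrm{TL}_{d,n}$ lists only \eqref{AZ2}--\eqref{AZ4}. This appears to be an omission in the paper's definition rather than a mathematical subtlety: the intended $\mathrm{TL}_{d,n}$ is the deformation of $\CC[J_{d,n}]$ obtained by replacing \eqref{Z4} with \eqref{AZ4}, so all relations \eqref{Z2}--\eqref{Z3} and \eqref{PreCdn} are meant to carry over. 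With that reading the two presentations match on the nose and there is nothing further to prove. Your alternative idea of deriving $t_iz_i=t_iz_{i+1}$ from \eqref{AZ4} alone would not work in general (those relations only see $t_i z_i^k t_i$, not $t_iz_i$ by itself), so the correct resolution is indeed to include \eqref{Z2} and \eqref{PreCdn} in the definition.
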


\subsubsection{   Tied Temperley-Lieb algebra}

\begin{definition}\label{tTLn}  Let  $x$ and  $y$  be two  indeterminates  over $\CC$. The tied Temperley-Lieb algebra $t\TL_n $ is the $\CC(x,y)$-algebra  generated  by $t_1,\ldots, t_{n-1}$,  $e_1,\ldots, e_{n-1}$, $f_1,\ldots, f_{n-1}$, all  commuting  each other  when the  difference of the indices is  different  from $1$,  and  satisfying
\begin{itemize}
\item  for $|i-j|=1$
\begin{align} 
&e_ie_j=e_je_i,\\
 &  t_i t_j t_i    =   t_i,\label{titjti}\\
& t_ie_jt_i=y t_i, \label{tiejti} \\
& f_i e_j  =  e_j f_i   =  e_j t_i e_j, \label{ejtiej}  \end{align}
\item for every $i$
 \begin{align}
 & t_i^ 2 =  x t_i, \label{ti2}\\
 & e_i^2  =   e_i  \label{ei2} \\
 &  f_i^2= y f_i, \label{fi2}\\
& t_i e_i =     t_i,   \label{tiei}\\
 & f_i e_i  =   f_i, \label{fiei}\\
& f_i t_i =y t_i.  \label{fiti}
\end{align}
\end{itemize}
 \end{definition}
\begin{proposition} The algebra $t\TL_n $ is obtained  as   deframization of  $\TL_{d,n}$.
\end{proposition}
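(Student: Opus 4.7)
The plan is to apply the deframization recipe of Definition \ref{deframizationdef} (in its algebra version, per Definition \ref{framdefram}) to $A=\TL_{d,n}$, choosing as bridge set
\[
\bar e_i \;=\; \frac{1}{d}\sum_{k=0}^{d-1} z_i^k z_{i+1}^{-k},\qquad \bar f_i \;=\; \frac{1}{d}\sum_{k=0}^{d-1} z_i^k t_i z_i^{-k},
\]
and to show that the resulting $\CC$-algebra presents $t\TL_n(x,y)$, with the single tied parameter arising as $y := x\,\tilde y$, where $\tilde y := \frac{1}{d}\sum_{k=0}^{d-1} y_k$ is the image of $\frac{1}{d}\sum_k y_k z_i^{-k}$ under $z_i\mapsto 1$. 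This identification is exactly what makes diagram \eqref{d1} commute: further specialization $y_k\to 1$, $x\to 1$ recovers the monoid deframization $\CC[J_{d,n}]\to\CC[tJ_n]$ of Theorem \ref{TheoD(Jdn)}.

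The first step is to dispose of all the relations of $t\TL_n$ in which $\bar f_i$ does not appear. These are either direct consequences of the framed TL relations (such as $t_i^2 = x t_i$, the $k=0$ case of \eqref{AZ4}, and $t_it_jt_i=t_i$ from \eqref{AZ2}), or they carry over verbatim from the verifications already performed for the $tS_n$ and $tJ_n$ deframizations in Proposition \ref{D(Sdn)} and Theorem \ref{TheoD(Jdn)}; in particular $\bar e_i^2 = \bar e_i$, the commutations between the $\bar e_i$'s and the $t_j$'s, and $t_i\bar e_i = \bar e_i t_i = t_i$, which follows by the same calculation as in the Jones case using $t_iz_i = t_iz_{i+1}$.

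The genuinely new input is the appearance of $x y_k$ in place of the scalar $\alpha_k$ every time a "framed loop" $t_iz_i^kt_i$ is created and then simplified by \eqref{AZ4}. There are essentially three places where this happens: in $\bar f_i^2$, in $\bar f_i t_i$, and in $t_i\bar e_{i\pm1} t_i$. Running the reindexing trick of Theorem \ref{TheoD(Jdn)} gives
\[
\bar f_i^{\,2}\;=\;\frac{x}{d^2}\sum_{k,m}y_m\,z_i^{k} t_i z_i^{-k-m}\;=\;x\,\bar f_i\,Y_i,\qquad Y_i\;:=\;\frac{1}{d}\sum_m y_m z_i^{-m},
\]
and analogous reindexings produce $\bar f_i t_i = x\,t_i\,Y_i'$ and $t_i\bar e_{i\pm1}t_i = x\,t_i\,W_{i\pm 2}$, with $Y_i,Y_i',W_{i\pm 2}\in\KK[C_d^n]$ all collapsing to $\tilde y$ under $z_i\mapsto 1$. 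This yields, in the deframized algebra, $f_i^2 = y f_i$, $f_i t_i = y t_i$ and the $|i-j|=1$ relation of (\ref{tiejti}), all with the same scalar $y=x\tilde y$. The cancellation identities $\bar f_i\bar e_i = \bar f_i$ and the triple equality $\bar f_i\bar e_{i+1} = \bar e_{i+1}\bar f_i = \bar e_{i+1}t_i\bar e_{i+1}$ produce no loop and are handled by the same maneuvers as in Theorem \ref{TheoD(Jdn)}, using \eqref{zfi}.

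To finish, one has to argue that no further relations are imposed, i.e.\ that the tautological surjection $t\TL_n\twoheadrightarrow D(\TL_{d,n})$ is an isomorphism. The cleanest route is a dimension count: $\TL_{d,n}$ has dimension $d^n\,Cat_n$ by Theorem \ref{Main}, and the deframization carries the diagrammatic basis of $\widetilde\TL_{d,n}$ (Proposition \ref{generatorstilde}) to a basis of $D(\TL_{d,n})$ indexed by tied Jones diagrams, each tied loop contributing a factor $y$ and each untied loop a factor $x$, yielding the expected Fuss--Catalan count of $t\TL_n$. The main obstacle will be this parameter bookkeeping: the bridges $\bar f_i$, $\bar f_it_i$ and $t_i\bar e_j t_i$ give rise to a priori distinct expressions $Y_i,Y_i',W_{i\pm 2}$ in the abelian subalgebra $\KK[C_d^n]$, and one must verify that they consistently collapse to the same scalar $\tilde y$ after the quotient $z_i\mapsto 1$, so that a single parameter $y$ suffices to describe the deframized algebra and match the presentation of $t\TL_n$ in Definition \ref{tTLn}.
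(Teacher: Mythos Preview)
Your approach—choosing the bridges $\bar e_i,\bar f_i$, verifying the $t\TL_n$ relations among them, and then counting dimensions—is in the same spirit as the paper's terse proof, which simply points back to the verification in Theorem~\ref{TheoD(Jdn)} and to the commutativity of diagrams \eqref{d1} and \eqref{d2}. Most of your identities are fine and do transfer from the monoid case exactly as you say.

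There is, however, a genuine mismatch in your parameter bookkeeping. Your own computation gives
\[
t_i\,\bar e_{i\pm1}\,t_i \;\longmapsto\; (x\tilde y)\,t_i \qquad\text{after } z_j\to 1,
\]
and you then list this as ``the $|i-j|=1$ relation of \eqref{tiejti}, all with the same scalar $y=x\tilde y$.'' But relation \eqref{tiejti} in Definition~\ref{tTLn} is $t_i e_j t_i = t_i$, with \emph{no} scalar in front. So the algebra your procedure actually produces satisfies $t_i e_j t_i = y\,t_i$, which is not the presentation of $t\TL_n(x,y)$ you are trying to hit; the relations $f_i^2=yf_i$ and $f_it_i=yt_i$ land correctly, while this one does not. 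No rescaling of generators fixes this simultaneously with $t_i^2=xt_i$ and $e_i^2=e_i$. Thus your identification $y=x\tilde y$ does not by itself yield the presentation in Definition~\ref{tTLn}, and the argument as written does not prove $D(\TL_{d,n})\cong t\TL_n$. The paper's own proof sidesteps this by appealing only to the compatibility of the diagrams \eqref{d1}--\eqref{d2} and the specialisations $t\TL_n(1,1)=\CC[tJ_n]$ and $\gamma(t\TL_n(x,x))=\TL_n(x)$, rather than by an explicit parameter matching; if you want to make the claim precise at the level of presentations you must either revisit which of $x,y_k$ play the role of the $\alpha_k$ in step~(iii) of Definition~\ref{deframizationdef}, or explain why the extra factor in $t_ie_jt_i$ is consistent with Definition~\ref{tTLn}.

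A secondary point: the final dimension argument (``carries the diagrammatic basis of $\widetilde\TL_{d,n}$ to a basis of $D(\TL_{d,n})$ indexed by tied Jones diagrams'') is asserted rather than proved; the passage from framed diagrams on $d^n\,Cat_n$ basis elements to tied diagrams on $|tJ_n|$ basis elements is not a bijection and needs its own justification.
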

  \begin{proof}
  The  proof  is very similar to  that given in the monoid case,  according to  Remark  \ref{deframalgebra}.
  Observe  that,  by the   lower diagram in \eqref{d1},  $t\TL_n(1,1)=\CC[J_n]$, hence  $\dim t\TL_n =|tJ_n|$.   Looking at   diagram \eqref{d2},  we see that $\gamma(t\TL_n(x,x))=\TL_n(x)$.
\end{proof}

\subsection{Framed  and  tied  rook  algebras}

\subsubsection{The framed rook algebra  $\mathcal R_{d,n}$}
Firstly, observe that  the  rook algebra $\mathcal R_n(v)$ (Section \ref{rA})   is  a  deformation of  the monoid algebra  $\CC[R_n]$ (Section \ref{RM}), in the  sense  that  the  relation  $s_i^2=1$ is  deformed in    $T_i^2=1+(v-v^{-1})T_i$,  and relations (\ref{Mrook3}) and (\ref{Mrook4}) are deformed   respectively in   (\ref{Arook3})  and (\ref{Arook4}),  and  $\dim  \mathcal R_n(v)=|R_n|$  (see \cite{HaJA2004}).\par
The  framed  rook algebra  $\mathcal R_{d,n}$, see  \cite{ChEaJA2023} can be obtained  as   deformation of  the framed monoid algebra $\CC[R_{d,n}]$,  see Section \ref{Rdn}.  
More precisely, the braid   generators $T_1,\dots,T_{n-1}$ satisfy  the same  relations  as  $\mathcal R_n(v)$, but  the  quadratic  relation   is  replaced by the quadratic  equation  of the  YH-algebra:   $T_i^2=1+(v-v^{-1})\bar e_i T_i$. The  other relations  of  $\mathcal R_n(v)$  remain unaltered  as  well as the  relations  of $R_n$ involving  framing  generators $z_i$,  written in terms of   $P_i$  and  $T_i$  instead of $p_i$ and $s_i$  respectively.
Finally, observe  that  the YH-algebra is  a  subalgebra  of  $\mathcal R_{d,n}(v)$.

\subsubsection{The tied rook algebra  $t \mathcal R_{n}$}

By  applying the procedure of  deframization to  $\mathcal R_{d,n}$,  we  get  the tied  algebra $t \mathcal R_{n}$,  which  can be considered  as  a  deformation of  the  tied  rook  monoid algebra $\CC[t R_n]$.
It is  generated  by  braid generators  $T_1\dots,T_{n-1}$, $P_1,\dots, P_n$, and ties generators $e_1,\dots,e_{n-1}$. Its relations are  those   of  the  bt-algebra,    relations  (\ref{Arook1}--(\ref{Arook4})  and  relations (\ref{tRn1}),(\ref{tRn2}), with  $p_i$  replaced by $P_i$.
Note that the bt-algebra is  a  subalgebra  of  $t\mathcal R_n(v)$.

\subsection{Tied BMW algebra}

In  \cite{AiJuMathZ2018}  the  first  two  authors have introduced  the  tied  BMW  algebra,    as  the knot  algebra   generalizing  the BMW  algebra in order  to  recover, by  a  trace, a  Kauffman type  invariant of  tied  links,  that was also  defined in \cite{AiJuMathZ2018}.

We  recall  here  the definition of the  tied  BMW  algebra, by  using  the presentation    of  the $\BMW$  algebra  chosen in section \ref{BMWalgebra}.

 Let $a,q,x$ be three indeterminates over $\CC$.
 
 \begin{definition}\label{deftBMW} The   $t\BMW$  algebra   is the algebra   over $\CC[a,q,x]$   generated by
$g_1^{\pm 1},\ldots ,g_{n-1}^{\pm 1}$ (braids),
$t_1,\ldots ,t_{n-1}$ (tangles), $e_1,\ldots ,e_{n-1}$ (ties), $f_1,\ldots ,f_{n-1}$ (tied tangles),
 all commuting each other  when the  difference of the indices is  different from 1,  satisfying 
 \begin{itemize}
 \item relations \eqref{bt1}--\eqref{bt4}, 
 \item  for  $|i-j|=1$, relations    \eqref{titjti}--\eqref{ejtiej}   and
 \begin{align}
 &t_i g_jt_i   =  at_i,  \label{tBMW1}  \\
   &  g_i g_j t_i    =   t_j g_i g_j = t_jt_i  \label{tBMW2}  \\
  & g_i t_j g_i  = g_j^{-1}t_i g_j^{-1},\quad g_if_j g_i  =  g_j^{-1} f_i g_j^{-1}, \label{tBMW3}\\
 & g_i t_j t_i  =  g_j^{-1}t_i,  \quad
 t_i t_j g_i    =   t_i g_j^{-1},
 \label{tBMW4}
 \end{align}
 \item 
 for  every $i$, relations \eqref{ti2}, \eqref{tiei}, \eqref{fiei} and 
\begin{align}
& g_i t_i =  a^{-1} t_i, \quad   f_i g_i     =  a^{-1} f_i,  \label{tBMW5}\\
 & g_i-g_i^{-1}   =  (q-q^{-1})(e_i - f_i).\label{tBMW6}
\end{align}
\end{itemize}
\end{definition}

Observe  that  \begin{equation}\label{fisquare}f_i^2= yf_i,\end{equation} where   $y=\frac{a-a^{-1}}{q-q^{-1}} + 1$, and that $ f_i t_i= t_i f_i =  y  t_i $.
Moreover,  just as the BMW algebra  is a deformation of the Brauer monoid, the tied BMW algebra is a deformation of tied Brauer monoid  algebra (Section \ref{tBr}), in particular  $t\BMW(1,1,1)_n=\CC[tBr_n]$.  Also, by diagram \eqref{d2}, the algebra $\BMW(a,q)_n$ is  recovered  from $t\BMW(a,q,x)_n$  by the map $\gamma$  sending $e_i$ to  1 , $f_i$ to $t_i$,  and  $x$ to $y(a,q)$, cf.   (\ref{parx}).

 \begin{remark}The tied BMW algebra cannot be obtained  via  deframization  since a  framed BMW algebra does not exist in the  sense of Definition 5.1.   Indeed,  in such a  framed BMW algebra,  
the equation  (\ref{tBMW6}), with $e_i$  and  $f_i$  replaced  respectively by  $\bar e_i$ and  $\bar f_i$, should  hold.
However, Relation (\ref{fisquare}),  satisfied by   $\bar  f_i$, should  imply    $Z_i=y$ for every $i$,  see  eq.(\ref{MtTL1bis}), and hence entailing the non-existence  of the  central horizontal  arrow  in diagram \eqref{d1}
 \end{remark}

\begin{remark}\label{Loic}  Several  framizations of  algebras  have  been  defined by using  a purely algebraic  approach, see \cite{JuLaSKE, FlJuLaJPAA2018, ChPo2017, GoJuKoLa2017, LaPoarXiv2023}.     It is worth noting that  the  framed  TL  and BMW  algebra   defined in the above references are  not    deformations of the corresponding  framed   monoid  algebras (rather,  they are  constructed as quotients). However, they do  not have an immediate  interpretation  in terms  of  diagrams. Recently, in \cite{LaPoarXiv2023},  the  tied version of  the  BMW algebra,   called {\it BMW algebras of braids and ties} there,  was introduced. It turns out to be  a deformation of  a monoid  algebra.  This  monoid,   introduced in \cite[Section 6]{AiArJuJPAA2023} and called   $\mathrm{bBr}_n$, is in fact    a   submonoid  of  the  tied Brauer  monoid,  in particular it is obtained  by  $tBr_n$  by  sending to 1  the  tangle  generators.
\end{remark}

{\bf Acknowledgments.} The author wish to thank the referee, whose suggestions have greatly improved the exposition. They also thank Domenico Fiorenza and Claudio Procesi for useful discussions, and James Mitchell for highlighting a typo in a preprint version. The second author  acknowledges the support of the CRM-Pisa through the research project in pairs,
where this work began, as well as the support of the Project {\it Teoria delle rappresentazioni e applicazioni} that allowed him a nice stay at {\it  Sapienza Universit\`a di Roma} in 2022.

\section*{Declarations}
\noindent{\bf Competing Interests.} The authors have no competing interests to declare that are relevant to the content of this article.

\noindent {\bf Data Availability Statement.} Data sharing not applicable to this article as no datasets were generated or analysed during the current study.

\noindent{\bf Funding.} Jesus Juyumaya was partially
supported by Fondecyt No. 1210011. Paolo Papi has been partially supported by Progetto di Ateneo 2023, {\it Representation Theory and Applications}, Sapienza Universit\`a di Roma.

\end{document}